\theoremstyle{plain} 
\newtheorem{theorem}{Theorem}[section]
\newaliascnt{proposition}{theorem}
\newtheorem{proposition}[proposition]{Proposition}
\newaliascnt{lemma}{theorem}
\newtheorem{lemma}[lemma]{Lemma}
\newaliascnt{corollary}{theorem}
\newtheorem{corollary}[corollary]{Corollary}
\newaliascnt{conjecture}{mainconj}
\newtheorem{conjecture}[conjecture]{Conjecture}
\newaliascnt{question}{mainconj}
\newtheorem{question}[question]{Question}
\theoremstyle{definition} 
\newaliascnt{definition}{theorem}
\newtheorem{definition}[definition]{Definition}
\newtheoremstyle{uprightnote}
  {3pt}{3pt}{\normalfont}{}{\bfseries}{.}{ }{}
\theoremstyle{uprightnote}
\newaliascnt{remark}{theorem}
\newtheorem{remark}[remark]{Remark}
\newaliascnt{example}{theorem}
\theoremstyle{uprightnote}
\newtheorem*{remarkunnumbered}{Remark}
\newenvironment{remark*}{\begin{remarkunnumbered}}{\end{remarkunnumbered}}
\newtheorem*{exunnumbered}{Example}
\newenvironment{example*}{\begin{exunnumbered}}{\end{exunnumbered}}
\crefname{theorem}{Theorem}{Theorems}
\crefname{proposition}{Proposition}{Propositions}
\crefname{lemma}{Lemma}{Lemmas}
\crefname{corollary}{Corollary}{Corollaries}
\crefname{conjecture}{Conjecture}{Conjectures}
\crefname{definition}{Definition}{Definitions}
\crefname{remark}{Remark}{Remarks}
\crefname{example}{Example}{Examples}
\crefname{question}{Question}{Questions}
\newcommand{\supp}{\operatorname{supp}}
\newcommand{\gen}{\operatorname{gen}}
\def\R{{\mathbb R}}
\def\Z{{\mathbb Z}}
\def\C{{\mathbb C}}
\def\D{{\mathbb D}}
\def\N{{\mathbb N}}
\def\S{{\mathbb S}}
\def \Q {{\mathbb Q}}
\def\T{{\mathbb T}}
\def\cH{{\mathcal H}}
\def\cP{{\mathcal P}}
\def\cA{{\mathcal A}}
\def\cL{{\mathcal L}}
\def\cI{{\mathcal I}}
\def\cV{{\mathcal V}}
\def\cK{{\mathcal K}}
\def\Xmt{(X,\mu,T)}
\def\cM{{\mathcal M}}
\def\cK{{\mathcal K}}
\def\cX{{\mathcal X}}
\newcommand{\norm}[1]{\left\lVert#1\right\rVert}
\newcommand{\RP}{\operatorname{RP}}
\newcommand{\nilbohrO}[1]{\operatorname{Nil_{#1}-Bohr_0}}
\newcommand{\bohr}{\operatorname{Bohr_0}}
\newcommand{\diff}{\mathop{}\!\mathrm{d}}
\newcommand{\hkbraket}[1]{\llbracket #1 \rrbracket}
\newcommand{\hknorm}[1]{{\left\vert\kern-0.25ex\left\vert\kern-0.25ex\left\vert #1 
    \right\vert\kern-0.25ex\right\vert\kern-0.25ex\right\vert}}
\newcommand{\1}{\ensuremath{\mathds{1}}}
\newcommand{\E}{\mathbb{E}}
\begin{document}

\title{Infinite sumsets in $U^k(\Phi)$-uniform sets}
\author{Trist\'an Radi\'c}
\address{Department of mathematics, Northwestern University, 2033 Sheridan Rd, Evanston, IL, United States of America}
\email{tristan.radic@u.northwestern.edu}
\thanks{ The author was partially supported by the National Science Foundation grant DMS-2348315.}
\subjclass[2020]{}

\begin{abstract}
    Extending recent developments of Kra, Moreira, Richter and Roberson, we study infinite sumset patterns in $U^k(\Phi)$-uniform subsets of the integers, defined via the local uniformity seminorms introduced by Host and Kra. We relate the degree $k$ of a $U^k(\Phi)$-uniform set to the existence of a rich variety of sumset patterns.  As a counterpart, we stablish higher order parity obstruction to sumsets arising from nilsystems.  We also provide examples of $U^k(\Phi)$-uniform sets for applications, including sets arising from the Thue-Morse and Rudin-Shapiro sequences. 
\end{abstract}

\maketitle

\tableofcontents

\section{Introduction}

Szemerédi's Theorem asserts that any subset of the natural numbers with positive density contains arbitrarily long arithmetic progressions. Different proofs of this theorem have played a central role in the development of additive combinatorics. In particular, since the introduction of Gowers norms in his proof of Szemerédi's Theorem \cite{gowers2001newszemeredi}, \emph{uniform sets} and related versions of \emph{pseudo-random sets} have been useful for finding finite patterns in subsets of the integers. For example, Gowers norms were used by Green and Tao to find arbitrarily long arithmetic progression in the set of prime numbers \cite{Green_Tao08} and Green, Tao and Ziegler established the connection to correlations with nilsequences by proving the related inverse theorem \cite{Green_Tao_Ziegler12}.

The ergodic counterpart, \emph{Host-Kra seminorms} defined for measure preserving systems and the structure theory of pronilfactors started in \cite{Host_Kra_nonconventional_averages_nilmanifolds:2005} have led to new recurrence properties (see \cite{Bergelson_Host_Kra05,donoso_koutsogiannis_Kuca_sun_tsinas2025resolvinghardy, frantzikinakis_host2018logarithmic_sarnak,frantzikinakis_kuca2025joint,Host_Kra05b} and for definition of pronilsystem see \Cref{sec preliminaries}). These recurrence results have combinatorial consequences via the Furstenberg correspondence principle that was originally introduced in \cite{Furstenberg77} to also provide an alternative proof of Szemerédi's Theorem.

More recently, the theory was used by Kra, Moreira, Richter and Roberson in \cite{kmrr_BB,kmrr1,kmrr25}, to established a series of infinite sumsets patterns in sets with positive density. This has led to numerous other related sumset results, see \cite{ackelsberg2024counterexamples, ackelsberg2025infinitepolynomial,ackelsberg_jamneshan2025equidistribution,DATI2025abelianBB,charamaras_mountakis2025amenable, hernandez2025infinite, host2019short, kousek2025asymmetric,  kousek_radic2025unrestrictedBB}. 
In \cite{kmrr25}, it is showed that for set of positive density $A \subset \N$ and $k\in \N$, there exist an infinite set $B \subset \N$ and an integer $t  \geq 0$ such that
\begin{equation} \label{eq KMRR BB}
    B \cup B^{\oplus 2} \cup \cdots \cup B^{\oplus k} \subset A - t, 
\end{equation}
where for $\ell \in \N$, $B^{\oplus \ell } = \{ b_1 + \cdots +  b_{\ell} \mid b_1,\ldots,  b_{\ell} \in B, b_1 < \cdots < b_{\ell}\}$.

Due to parity obstructions, the shift $t$ is needed in \eqref{eq KMRR BB}, take for instance the set of odd numbers. In this paper we stablish some higher-order parity obstructions arising from nilsystems and we show that $U^k(\Phi)$-uniform sets avoid such obstructions resulting in sumset patterns without shifts. 

To state the results precisely we introduce some notation and refer to \Cref{sec local unif semi} for full definitions.  Let $\norm{\cdot}_{U^k (\Phi)}$ denote the \emph{local uniformity seminorms} in $\ell^\infty(\N)$ introduced by Host and Kra in \cite{Host_Kra09}.
These seminorms are similar to Gowers norms but defined over $\Z$ rather that over a finite group and so can be handled more directly with dynamical systems tools using the original Host-Kra seminorms. For a set $A \subset \N$, we denote the \emph{density of $A$ along the F\o lner sequence} $\Phi$ as 
\begin{equation*}
    \diff_{\Phi}(A) =\lim_{N \to \infty} \frac{|A \cap \Phi_N |} { | \Phi_N|} 
\end{equation*}
whenever the limit exists.
For a given F\o lner sequence $\Phi$, we say that a set $A \subset \N$ is $U^k(\Phi)$\emph{-uniform} if $\diff_\Phi(A)$ is positive and $\norm{\1_A - \diff_{\Phi}(A) }_{U^k (\Phi)} =0$. Examples of $U^k(\Phi)$-uniform sets can be found in Sections \ref{sec constant length substitutions} and \ref{sec combinatorial app}.  
The degree $k$ should be understood as a \emph{level of randomness}, and we give precise consequences for how complex the sumset patterns can be contained in a $U^k(\Phi)$-uniform set.

 \begin{theorem} \label{theorem uniformity BB}
    Let $k \geq 2$ be an integer and $A_1, \ldots, A_k \subset \N$ be $U^k(\Phi)$-uniform sets with respect to the same F\o lner sequence $\Phi$. Then there exists an infinite set $B \subset \N$ such that
         \begin{equation*} \label{eq BB uniform}
              B^{\oplus i} \subset A_i  \ \text{ for all } \ i =1, \ldots, k.
         \end{equation*} 
\end{theorem}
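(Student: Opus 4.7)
The plan is to build $B = \{b_1 < b_2 < \cdots\}$ inductively, maintaining at each stage the invariant that $\{b_1, \ldots, b_n\}^{\oplus i} \subset A_i$ for all $i = 1, \ldots, k$. The initial step is immediate: pick $b_1 \in A_1$, which is infinite as $A_1$ has positive density along $\Phi$. For the inductive step, given $b_1 < \cdots < b_n$ with the invariant, the valid extensions $b_{n+1} > b_n$ are exactly the elements of
\[
E_n := \bigcap_{i=1}^k \bigcap_{\substack{S \subset [n] \\ |S|=i-1}} \left( A_i - \sum_{j \in S} b_j \right) \cap (b_n, \infty).
\]
It thus suffices to show that the $b_j$'s can be chosen so that $E_n$ is infinite at every stage.

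I would carry this out by iteratively applying \cref{theorem uniformity BCD} to refined sets. Concretely, at stage $n$, one replaces the original $A_i$'s by the shifted intersections
\[
\widetilde A_i^{(n)} := \bigcap_{\substack{S \subset [n] \\ |S|=i-1}} \left( A_i - \sum_{j \in S} b_j \right),
\]
and establishes, via correlation estimates supplied by the $U^k(\Phi)$-seminorm, that $\widetilde A_i^{(n)}$ contains a $U^k(\Phi^{(n)})$-uniform subset along some F\o lner subsequence $\Phi^{(n)}$ of $\Phi$, provided the previously chosen $b_j$'s are sufficiently generic. Applying \cref{theorem uniformity BCD} to these subsets (with $\ell=1$, say) yields an infinite supply of admissible $b_{n+1}$, and one then diagonalizes across the nested F\o lner subsequences to produce $B$.

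The main obstacle is the correlation estimate underpinning this reduction. The $U^k(\Phi)$-seminorm directly controls averages over $2^k$-fold products arranged as vertices of $k$-dimensional parallelepipeds, whereas $\widetilde A_i^{(n)}$ involves $\binom{n}{i-1}$ shifts, growing with $n$. The resolution is that the shifts $\sum_{j \in S} b_j$ parametrize vertices of the $n$-dimensional parallelepiped spanned by $b_1, \ldots, b_n$, and so one can split the product into $k$-dimensional blocks and apply repeated Cauchy--Schwarz (van der Corput) reductions to bound the correlation in terms of a single $U^k(\Phi)$-seminorm expression, provided the $b_j$'s are selected step by step from uniform subsets so that the resulting shifts remain sufficiently generic. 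Managing this inductive selection, together with the bookkeeping of nested F\o lner subsequences $\Phi^{(n)}$ whose joint diagonal retains positive density, is the delicate part of the argument.
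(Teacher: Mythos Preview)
Your inductive scheme has a genuine gap at the step you yourself flag as the ``main obstacle'': showing that the refined sets $\widetilde A_i^{(n)}$ contain $U^k$-uniform subsets along a sub-F\o lner sequence. This is not a technicality but essentially the whole theorem, and the mechanism you propose does not work. The function $\1_{\widetilde A_i^{(n)}}$ is a product of $\binom{n}{i-1}$ shifted copies of $\1_{A_i}$, and $U^k$-uniformity is \emph{not} preserved under such products. In any weakly mixing system every positive-measure set is $U^k$-uniform for every $k$, yet $E \cap T^{-b}E$ can have arbitrary (even zero) measure for a specific $b$; so already at stage $n=1$ nothing forces the particular $b_1$ you chose to make $A_1 \cap (A_2 - b_1)$ large or uniform. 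Your proposed cure---``split into $k$-dimensional blocks and apply repeated Cauchy--Schwarz''---goes in the wrong direction: each van der Corput step \emph{raises} the order of the controlling seminorm, so bounding a product indexed by a layer of the $n$-cube this way would require $U^{k'}$-control with $k'$ growing in $n$, not the fixed $U^k$ at hand. (Separately, invoking \cref{theorem uniformity BCD} ``with $\ell=1$'' is incoherent: that theorem needs $\ell \ge \ell_k \ge k \ge 2$.)

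The paper avoids all of this by a single pass through the Furstenberg correspondence to an ergodic system $(X,\mu,T)$ with open topological pronilfactors, where $A_i = N(a,E_i)$ for clopen $U^k(X,\mu,T)$-uniform sets $E_i$. Uniformity forces each $E_i$ to meet the fiber $\pi_{k-1}^{-1}(\pi_{k-1}(a))$, and the dynamical input (\cref{theorem BB points}, via the progressive measure $\sigma_a$ of \cref{definition of sigma}) produces an Erd\H{o}s progression $(a,y_1,\ldots,y_k)$ with $y_i \in E_i$; the infinite set $B$ is then read off in one stroke from this progression via \cref{Erdos cubes and progressions and sumsets}. There is no induction on $|B|$, and no need to control iterated intersections of shifts.
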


  For a fixed $U^k(\Phi)$-uniform set $A \subset \N$, \cref{theorem uniformity BB} is a special case of Conjecture 3.26 stated in \cite{kra_Moreira_Richter_Roberson2025problems} and results from \Cref{sec nilsystem return times} are partial evidence for the full conjecture. Furthermore, our result lead to a more general version of the conjecture stated in \Cref{sec open q}.

The special case of \cref{theorem uniformity BB} for $k=2$ was proved in \cite{kra_Moreira_Richter_Roberson2025problems} using \emph{weak mixing sequences}. Also for $k=2$, a similar approach was used for infinite polynomial sumset configurations in \cite[Corollary 11.4]{ackelsberg2025infinitepolynomial}. 
Similarly, prior the results of \cite{kmrr1,Moreira_Richter_Robertson19},  Di Nasso, Goldbring, Jin, Leth, Lupini, and Mahlburg proved, among other things, that $U^2(\Phi)$-uniform sets contain the pattern $B_1 + B_2$ for infinite $B_1, B_2 \subset \N$. The techniques used in those proofs differ from the ones used here and, a priori, cannot be generalized to higher degrees because of the inductive process employed to construct the corresponding sumsets. 

  A larger variety of sumsets can be found in \Cref{sec uniform sets}. Among other result, we study sumsets along prescribed parallelepiped vertices (\Cref{sec leibman configuration and other linear}) and we also intersect $U^k(\Phi)$-uniform sets with $\nilbohrO{k-1}$ sets (\Cref{sec intersection nilbor}). For this last application we study measurable joinings of nilsystems. Furthermore, we strengthen \cref{theorem uniformity BB} for the case of $U^\infty(\Phi)$-uniform sets (that is $U^k(\Phi)$-uniform sets for all $k \in \N$), see \cref{U^infty very strong main theorem}.

 To capture new parity obstructions we introduce the notion of fully supported continuous disintegration (see \Cref{sec continuous disintegration}). This property is fulfilled by minimal pronilsystems and is preserved under natural operations between topological systems. It allows us to derive topological consequences from measure theory tools (see for example \cref{prop the support is contained}). 

Using the fully supported continuous disintegration, we derive sumset consequences for pronilsystem. In particular, \cref{theorem main dynamical very important} shows that under technical assumptions if two points $x,y \in X$ are in the same fiber over the $(k-1)$-step pronilfactor, then the set of return times of $x$ to any neighborhood of $y$ contain the pattern $B \cup B \oplus B \cup \cdots \cup B^{\oplus k}$ for some infinite $B \subset \N$. In the minimal case, we derive further implications using the regionally proximal relation of order $k$, $\RP^{[k]}(X)$, defined first by Host, Kra and Maass \cite{Host_Kra_Maass_nilstructure:2010}. The main result is summarized in \cref{theorem summary} and here we highlight two equivalences.

\begin{theorem} \label{theorem summary intro} 
    Let $k \geq 2$ be an integer, $(X, T)$  be a minimal pronilsystem. For $x,y \in X$, the following are equivalent
    \begin{itemize}
        \item  For every neighborhood $V$ of $y$ there exists $c_1, \ldots, c_k \in \N$ distinct natural numbers such that 
        $$\sum_{i \in I} c_i \in \{ n \in \N \mid T^n x \in V\} \quad \text{ for all } \quad I \subset \{1,\ldots, k\}, \ I \neq \emptyset; $$
        \item  For every neighborhood $V$ of $y$ there exists an infinite set $B \subset \N$ such that $$B \cup \cdots \cup B^{\oplus k} \subset \{ n \in \N \mid T^n x \in V\} $$ 
    
\end{itemize}

\end{theorem}

In \Cref{sec constant length substitutions}, we work with sequences arising from substitution systems (refer to that section for the definitions). We highlight two special cases: the Thue-Morse sequence  $(t(n))_{n \in \N_0} $ and the Rudin-Shapiro sequence $(r(n))_{n \in \N_0} $ given respectively by  $t(n) = ($number of digits $1$ in the binary expansion of $n) \mod 2$ and $r(n) = ($number of blocks $11$ in the binary expansion of $n) \mod 2$. The main sumset consequence in that context is the following.

\begin{theorem} \label{constant length subs sumset intro}
    Let $(X(\sigma),\mu,S)$ be a substitution subshift with $\sigma \colon \cA^* \to \cA^*$ primitive and constant length. Let $E \subset X(\sigma)$  be a clopen set, $x \in X(\sigma)$ and define  $\phi(n) = \1_E(S^n x)- \mu(E)$ for all $n \in \N$. If for every periodic sequence $(\psi(n))_{n \in \N}$
    \begin{equation} \label{eq orthogonal to periodic intro}
        \lim_{N \to \infty} \frac{1}{N} \sum_{n=1}^N \phi(n) \psi(n) =0,
    \end{equation}
    then $A = \{ n \in \N \mid S^n x \in E\}$ is $U^\infty(\Phi)$-uniform set. In particular, there exists a sequence of infinite sets $B_1 \supset B_2 \supset \ldots$ such that
    \begin{equation} \label{eq pattern un constant length subs}
        \bigcup_{k \in \N} B^{\oplus k}_k \subset A. 
    \end{equation}
\end{theorem}

As an application of the previous theorem, we recover sumset results proved by Bucci, Hindman, Puzynina and Zamboni in \cite{bucci_Hindman_Puzynina_Zamboni2013additive_thue_morse} for the Thue-Morse sequence: the set $\{ n \in \N \mid t(n) = a\}$ contains the pattern \eqref{eq pattern un constant length subs} for $a=0,1$. 
Likewise for the Rudin-Shapiro sequence $\{ n \in \N \mid r(n) = a\}$, $a=0,1$ (see \cref{prop sumset and uniformity Rudin-shapiro}). As an application, we provide a $U^\infty(\Phi)$-uniform set that is not an IP-set, in other words a set that does not contain the pattern $\bigcup_{k \in \N} B^{\oplus k}$ for an infinite set $B \subset \N$ (see \cref{prop infty uniform but not IP}). Also inspired by \cite{bucci_Hindman_Puzynina_Zamboni2013additive_thue_morse}, using a related substitution, we show that \cref{theorem summary intro} cannot be generalized to arbitrary minimal and uniquely ergodic systems (see also \cref{question RP}). 

\cref{constant length subs sumset intro} follows from the characterization of uniformity for sequences arising from primitive constant length substitution, see \cref{thrm uniformity constant length substitution}. Moreover, since Gowers norms are controlled by local uniformity seminorms \cite[Chapter 22, lemma 10]{Host_Kra_nilpotent_structures_ergodic_theory:2018}, \cref{constant length subs sumset intro} recovers the Gowers norm control for $k$-automatic sequences produced by a strongly connected, prolongable automaton proved in \cite[Theorem A]{byszewski_konieczny_mullner2020gowers_automatic}. Refer to  \cite{byszewski_konieczny_mullner2020gowers_automatic} for $k$-automatic sequence terminology and to Cobham's theorem \cite[Theorem 6.3.2]{cobham1972uniform,allouche_shallit2003automatic} for the relation between $k$-automatic sequences and constant-length substitutions. 
In particular, we extend results from \cite{konieczny2019gowers_Thue_Morse_Rudin_Shapiro} concerning Gowers uniformity of the Thue-Morse and Rudin-Shapiro sequences (see Propositions \ref{thrm infty uniformity thue-morse} and \ref{prop sumset and uniformity Rudin-shapiro}).

\subsection*{Notation}

By $\Z$, $\N_0$, $\N$, $\R$ and $\C$ we denote, respectively, the integers,
nonnegative integers, positive integers (natural numbers), the reals and the complex numbers. For a number $t \in \R$ we denote $e(t) = \exp(2 \pi i t)$ and $\S^1 = \{ z \in \C \mid |z| =1\}$.

\subsection*{Acknowledgment}
I am grateful to Bryna Kra for introducing me to this problem, for helpful guidance, and for later comments regarding previous drafts of this article. I would like to thank Xiandong Ye for his feedback that helped improve the manuscript. I benefited from the questions of the regular attendees of the informal seminar at Northwestern, and among them, I offer special thanks to Seljon Akhmedli, Redmond McNamara, and Andreas Mountakis for useful discussions. I thank Axel Álvarez and Felipe Hernández for suggestions on an early draft. Finally, concerning the second version of this paper, I express my gratitude once again to Bryna Kra and Axel Álvarez, as well as to Felipe Arbulú and Florian Richter.

\section{Preliminaries} \label{sec preliminaries}

In this section we fix notation and we gather classic definitions in the area. For more extended expositions refer to \cite{Furstenbergbook:1981,Glasner_ergodic_theory_joinings:2003,Host_Kra_nilpotent_structures_ergodic_theory:2018}.

\subsection{Dynamical systems}

A \emph{topological dynamical system} is a pair $(X, T)$ where $X$ is a compact metric space and $T \colon X \to X$ is a homeomorphism. A point $a \in X$ is \emph{transitive} if $\overline{\{ T^n a \mid n \in \N \}} =X$. The system $(X,T)$ is \emph{transitive} if it has a transitive point and $(X,T)$ is \emph{minimal} if every point is transitive. 

Let $\cM(X)$ be the set of Borel probability measures of $X$ and $\cM(X,T) \subset \cM(X)$ be the set of $T$-invariant Borel probability measures, that is if $\mu \in \cM(X,T)$ then $\mu(A) = \mu(T^{-1}A)$ for all Borel sets $A \subset X$. We say that $(X, \mu,T)$ is a \emph{measure preserving system} if $(X, T)$ is a topological dynamical system and $\mu \in \cM(X,T)$. With this definition all our measure preserving system have an underlying topological structure, and this is necessary for the sumset results.   
We say that $(X, \mu, T)$ is ergodic (or simply $\mu$ is ergodic) if whenever a measurable set $A \subset X$ satisfies $\mu(A \Delta T^{-1}A)=0$ then $\mu(A) = 0$ or $1$. 

We also consider other group actions, specifically $\Z^d$-actions that we denote $(X, \mu,$ $ T_1, \ldots, T_d)$ and cubic group actions (see \Cref{sec cubic space}). The previous definitions are extended naturally, for more detail refer to \cite{Glasner_ergodic_theory_joinings:2003}. 

 A \emph{F\o lner sequence} $\Phi =(\Phi_N)_{N \in \N}$ is a sequence of finite subsets of the natural numbers for which $ \lim_{N \to \infty} \frac{|\Phi_N \Delta (\Phi_N+t)|}{ | \Phi_N|} =0$ for every  $t \in \N$. If $(X, \mu,T)$ is a measure preserving system, we say that a point $a \in X$ is \emph{generic along a F\o lner sequence} $\Phi$, denoted $a \in \gen(\mu,\Phi)$ if for all continuous functions $f \in C(X)$
\begin{equation*}
    \lim_{N \to \infty} \frac{1}{|\Phi_N|} \sum_{n \in \Phi_N}f(T^na) = \int_X f \diff \mu.
\end{equation*}
By the mean ergodic theorem the set of generic points along a given F\o lner sequence has measure one whenever $\mu \in \cM(X,T)$ is ergodic. Also, if for some point $a \in X$, $\supp \mu \subset \overline{\{ T^n a \mid n \in \N\}}$ and $\mu $ is ergodic, then there exists a F\o lner sequence $\Phi$ such that $a \in \gen(\mu, \Phi)$.

 For a Borel map $\theta \colon X \to Y$ between topological spaces, if $\mu \in \cM(X)$ is a Borel probability measure, then \emph{the pushforward measure} $\nu \in \cM(Y)$ denoted $\nu = \theta \mu$ is given by $\nu(A) = \mu(\theta^{-1}(A))$ for all measurable sets $A \subset Y$. For two measure preserving systems $(X, \mu,T)$ and $(Y,\nu,S)$, we say that $(Y, \nu,S)$ is a \emph{factor} of $(X, \mu,T)$ if there exists a measurable map $\pi \colon X \to Y$, called a \emph{factor map}, such that $\pi\mu = \nu$ and $\pi \circ T = S \circ \pi$, up to null-sets. Abusing notation sometimes we just say that $Y$ is a factor of $X$ and that $X$ is an \emph{extension} of $Y$. 
 If the factor map $\pi \colon X \to Y$ is continuous, surjective and $(\pi \circ T)(x) = (S \circ \pi)(x)$ for all $x \in X$, we say that $\pi$ is a \emph{continuous factor map} and $Y$ is a \emph{topological factor} of $X$. We abuse notation by using the same letter $T$ for the transformation in $X$ and in its factor $Y$.
 
For a factor map $\pi: X\to Y$ and a function $f\in L^2(\mu)$, 
$\mathbb{E}(f | Y)$ denotes the conditional expectation 
$\mathbb{E}(f | \pi^{-1}\mathcal{B}(Y))$, where $\mathcal{B}(Y)$ is the Borel 
$\sigma$-algebra on $Y$. Notice that $\mu$-almost surely, $\mathbb{E}(f | Y)(x) = \mathbb{E}(f | Y)(x')$ whenever $\pi(x) = \pi(x') = y$, so abusing notation we also write $\mathbb{E}(f | Y)(y)$ and we think $\mathbb{E}(f | Y)$ as a function in $L^2(\nu)$. 

Consider a sequence of measure preserving systems $(X_j,\mu_j,T)_{j\in \N}$ for which there is a factor map $\theta_{j}: X_{j+1}\to X_j$ for each $j\in \N$. $(X,\mu,T)$ is the \emph{inverse limit} of this sequence if for each $j \in \N$ there exists a factor map $\pi_j \colon X \to X_j$ such that
\begin{itemize}
    \item  $\pi_{j+1}=\theta_{j} \circ \pi_j$, for every $j\in \N$ and 
    \item $\bigcup_{i\in \N} \{f \circ \pi_i : f\in L^2(\mu_i)\}$ is dense in $L^2(\mu)$.
\end{itemize}
A similar definition applies for topological dynamical systems (replacing factor maps by continuous factor maps and $f \in L^2(\mu_j)$ by $f \in C(X_j)$). 

\subsection{Pronilfactors and Host-Kra seminorms}  \label{sec pronilfactors and uniformity seminorms}

A $k$-\emph{step} \emph{nilmanifold} $X = G/ \Gamma$ is the quotient space of a $k$-step nilpotent Lie group $G$ with $\Gamma$ a co-compact discrete subgroup. We endow $ G/ \Gamma$ with its Haar measure $m_X$ and let $T \colon X \to X$ be the left translation by a fixed element $\tau \in G$. In that case we say that $(X, m_X, T)$ is a $k$-\emph{step nilsystem}. We can generalize it to $(X, m_X, T_1, \ldots, T_d)$ where $\tau_1, \ldots, \tau_d \in G$ commute and $T_i \colon X \to X$ are their respective left translation maps. Without loss of generality, we always suppose that $G$ is generated by the connected component of the identity $G^0$ and the points $\tau_1, \ldots, \tau_d$. 

An inverse limit of $k$-step nilsystems is called a $k$-step \emph{pronilsystem}. An $\infty$-step \emph{pronilsystem} is 
an inverse limit of a sequence $\left((Z_k, m_k,T)\right)_{k\in \N}$, where for each $k \in \N$, $(Z_k, m_k, T)$ is a $k$-step pronilsystem. We say that a system is a pronilsystem if it is a $k$-step pronilsystem for some $k \in \N \cup \{ \infty\}$. Every pronilsystem is \emph{distal}, that is, for any $x,y \in X$ such that $\inf_{n \in \N} d(T^n x, T^n y)=0$, then $x=y$.   

Pronilsystems $(X,T_1, \ldots, T_d)$ have the useful property that the topological notions of transitivity, minimality and unique ergodicity are equivalent. Moreover, if $(X, T_1, \ldots, T_d)$ is a pronilsystem (non necessarily minimal), then for any $x \in X$, $\overline{\{T_1^{n_1}\cdots T_d^{n_d}x \mid (n_1, \ldots, n_d) \in \Z^d\}} \subset X$ is minimal and uniquely ergodic, see \cite{leibman2005pointwise}.

Let $(X, \mu,T)$ be an ergodic measure preserving system. For every $k \in \N \cup \{\infty\}$, $(X, \mu,T)$ admits a maximal factor isomorphic to a $k$-step pronilsystem, called the $k$\emph{-step pronilfactor}. We denote this factor $Z_k(X)$, $Z_k(\mu)$ or just $Z_k$ depending on the context. In what follows, we always consider the topological model of 
 $Z_k$ which is a 
$k$-step pronilsystem. For $k=1$, this factor is also known as the \emph{Kronecker factor} and can be characterized as the factor spanned by the eigenvalues of $\Xmt$, that is the functions $f \in L^2(\mu)$ such that $Tf = \lambda f$ for some $\lambda \in \S^1$. An ergodic system $\Xmt$ is \emph{weak mixing} if $\lambda=1$ is the unique eigenvalue. If $\Xmt$ is weak mixing then $Z_k(\mu)$ is the trivial factor for all $k \in \N$. Similarly, if for some $k \in \N$, $Z_{k+1} = Z_k$, then $Z_s = Z_k$ for all $s \geq k$.

    For a measure preserving system $(X, \mu,T)$, $f \in L^{\infty}(\mu)$ and $k \in \N_0$, the \emph{$k$-th Host-Kra seminorm of} $f$, $\norm{f}_{U^{k}(X,\mu,T)}$, is defined inductively as follows: 

    \begin{align}
        \norm{f}_{U^0(X, \mu,T)}& =  \int_X f \diff \mu  \quad \text{ and} \nonumber \\
         \norm{ f}_{U^{k+1}(X, \mu,T)}^{2^{k+1}}&= \lim_{H \to \infty }\frac{1}{H} \sum_{h \leq H} \norm{f \cdot \overline{T^hf}}_{U^k(X, \mu,T)}^{2^k}. \label{eq def seminorm}
    \end{align}
    The function $\norm{ \cdot }_{U^{k}(X, \mu,T)}$ is a seminorm when $k \geq 1$. The fact that the limit in \eqref{eq def seminorm} exists is proved in \cite{Host_Kra_nonconventional_averages_nilmanifolds:2005} by iterative applications of the mean ergodic theorem. 
The main theorem in \cite{Host_Kra_nonconventional_averages_nilmanifolds:2005} shows that the $k$-step pronilfactor, $Z_{k}$, of an ergodic system $(X, \mu,T)$ is characterized by the property 
    \begin{equation} \label{eq def Z_k con seminorms}
        \norm{f}_{U^{k+1}(X, \mu,T)} = 0 \iff \E(f \mid Z_{k}) =0
    \end{equation}
    for any $f\in L^{\infty}(\mu)$. Similarly,
    \begin{equation*}
           \E(f \mid Z_\infty)=0 \iff \lVert f \rVert_{U^k(X,\mu,T)} = 0 \quad \text{ for all } k \in \N.
        \end{equation*}

    \begin{definition}
        We say that a measurable set $E \subset X$ is $U^k(X, \mu,T)$-uniform for some $k \in \N$, if $\mu(E) >0$ and $\norm{\1_E - \mu(E)}_{U^k(X, \mu,T)}=0$. Using \eqref{eq def Z_k con seminorms}, this is equivalent to $\E(\1_E \mid Z_{k-1}) (x)= \mu(E)$ for $\mu$-almost every $x \in X$. 
        Similarly, $E \subset X$ is $U^\infty(X, \mu,T)$-uniform if it is $U^k(X, \mu,T)$-uniform for all $k \in \N$. 
    \end{definition}




    
    \subsection{Local uniformity seminorms} \label{sec local unif semi}

  We say that a bounded function $\phi \colon \N \to \C$ \emph{admits correlations along the F\o lner sequence} $\Phi$ if the limit
\begin{equation*}
    \lim_{N \to \infty} \frac{1}{|\Phi_N|} \sum_{n \in \Phi_N} \prod^k_{i=1} C^{\epsilon_i} \phi(n + h_i)
\end{equation*}
exists for all $h_1, . . . , h_k \in \N$ and $\epsilon_1, \ldots, \epsilon_k \in \{0,1\}$, where for a complex number $z$, $C z = \overline{z}$ is the complex conjugation.  Assuming that $\phi \colon \N \to \C$ admits correlations along a given F\o lner sequence $\Phi$ we define, inductively in $k$, the \emph{local uniformity seminorms of $\phi$ along $\Phi$} by
\begin{align}
    &\norm{\phi}_{U^0(\Phi)} = \lim_{N \to \infty} \frac{1}{|\Phi_N|} \sum_{n \in \Phi_N} \phi(n) \nonumber \\
    &\norm{\phi}_{U^{k+1}(\Phi)}^{2^{k+1}} = \lim_{H \to \infty}\frac{1}{|H|} \sum_{h =1}^H \norm{\Delta_h \phi }^{2^k}_{U^k(\Phi)}  \label{eq local uniformity norms} 
\end{align}
where $(\Delta_h \phi)(n) = \phi(n) \overline{\phi(n+h)}$ for all $n,h \in \N$. The existence of the limits in \eqref{eq local uniformity norms} is proved by Host and Kra in \cite{Host_Kra09}, using the Host-Kra seminorms defined in \eqref{eq def seminorm}. 
   As it is mentioned in the introduction a set $A \subset \N$ is $U^k(\Phi)$\emph{-uniform} if the indicator function $\1_A \colon \N \to \C$ admits correlations along $\Phi$, $\diff_{\Phi}(A) > 0$ and $\norm{\1_A - \diff_{\Phi}(A) }_{U^k (\Phi)} =0$.

    Notice that for any F\o lner sequence $\Phi$ and any bounded sequence $\phi \in \ell^\infty (\N)$ there exists a subsequence $\Psi$ of $\Phi$ for which $\phi$ admits correlations.
  We also notice some similarities with the Host-Kra seminorms, for $k \in \N$, $f \in L^\infty(\mu)$ in a measure preserving system $\Xmt$ and $\phi \in \ell^\infty (\N)$ admitting correlations along $\Phi$
  \begin{equation*}
      \norm{f}_{U^k\Xmt} \leq \norm{f}_{U^{k+1}\Xmt} \quad \text{ and } \quad  \norm{\phi}_{U^k(\Phi)} \leq \norm{\phi}_{U^{k+1}(\Phi)}. 
  \end{equation*}
  In \Cref{sec furstenberg correspondence} we make the connection between this two seminorms explicit following \cite{Host_Kra09}. Similarly, we show the correspondence between $U^k(X, \mu,T)$-uniform sets $E \subset X$ of a measure preserving system and  $U^k(\Phi)$-uniform sets $A \subset \N$, see \cref{furstenberg correspondence for sets}.

\subsection{Cubic spaces} \label{sec cubic space}

For $k \in \N$, we write $\hkbraket{k} = \{0,1\}^k$ and $\hkbraket{k}^* = \hkbraket{k} \setminus \{\underline{0}\}$, where $\underline{0}$ is the vector $0$. Also, for a vertex $\epsilon = \epsilon_1 \epsilon_2 \cdots \epsilon_k \in \hkbraket{k} $, we write $|\epsilon|= \epsilon_1 + \cdots + \epsilon_k$.   For a set $X$ and functions $f,g \colon X \to \C$, we denote $f \otimes g \colon X \times X \to \C$ the function given by $(f \otimes g) (x,y) = f(x) g(y)$ for all $x,y \in X$. 

Following \cite[section 4]{kmrr1} for $\boldsymbol x \in X^{\hkbraket{k}} = X^{2^k}$ and $i =1,\ldots, k$ we write $F_i \boldsymbol x$ and $F^i \boldsymbol x$ its $i$th \emph{lower} and \emph{upper} face define by
\begin{align} \label{eq lower and upper faces}
    F_i\boldsymbol{x} =  (x_\epsilon \colon \epsilon_i =0) \quad \  F^i\boldsymbol{x} =  (x_\epsilon \colon \epsilon_i =1)
\end{align}
and then formally $(F_i\boldsymbol{x}, F^i\boldsymbol{x}) \in X^{\hkbraket{k}}$. In particular $X^{\hkbraket{k}}$ is identified with $X^{\hkbraket{k-1}} \times X^{\hkbraket{k-1}}$ with $\boldsymbol{x} = (F_1 \boldsymbol{x}, F^1 \boldsymbol{x})$.
 In \cite{Host_Kra_nonconventional_averages_nilmanifolds:2005}, Host and Kra introduced the following measure, defined inductively in $X^{\hkbraket{k}}$ as follows, $\mu^{\hkbraket{0}}= \mu$ and for $G_0, G_1 \in L^\infty(\mu^{\hkbraket{k}})$
\begin{align*} 
    \int_{X^{\hkbraket{k+1}}} G_0 \otimes G_1 \diff \mu^{\hkbraket{k+1}} = \int_{X^{\hkbraket{k+1}}} \E(G_0 \mid \cI(T^{\hkbraket{k}})) \otimes \E(G_1 \mid \cI(T^{\hkbraket{k}})) \diff \mu^{\hkbraket{k}} 
\end{align*}
where $\cI(T^{\hkbraket{k}})$ is the $\sigma$-algebra of $T^{\hkbraket{k}}$-invariant sets for $T^{\hkbraket{k}} = T \times \cdots \times T$. 

The measure $\mu^{\hkbraket{k}}$ is invariant under digit permutation that is $\phi\mu^{\hkbraket{k}}= \mu^{\hkbraket{k}}$ where $\phi$ is an arbitrary permutation of elements in $\{1, \ldots, k\}$ which induces a map $\phi \colon \hkbraket{k} \to \hkbraket{k}$ by $(\phi\epsilon)_i \mapsto \epsilon_{\phi(i)} $ for all $i =1, \ldots, k$ and thus induces a map, also denoted by $\phi \colon X^{\hkbraket{k}} \to X^{\hkbraket{k}}$ given by $(\phi x)_\epsilon = x_{\phi(\epsilon)}$ for all $\epsilon \in \hkbraket{k}$. 
From \cite{Host_Kra_nonconventional_averages_nilmanifolds:2005}, we have that for $f \in L^{\infty}(\mu)$, 
\begin{equation*}
    \norm{f}_{U^k(X, \mu, T)} = \bigg( \int_{X^{\hkbraket{k}}} \bigotimes_{\epsilon \in \hkbraket{k}} C^{|\epsilon|} f \diff \mu^{\hkbraket{k+1}}\bigg)^{1/2^k}.
\end{equation*}
Let $Q^{\hkbraket{k}}_0(T)$ be the group generated by the upper face transformations $S_1, \ldots, S_k$ where for $\boldsymbol{x} \in X^{\hkbraket{k}}$ and a given $i \in \{1, \ldots,k\}$,
\begin{equation*}
    (S_i \boldsymbol{x})_\epsilon = \begin{cases}
        \begin{array}{cc}
            Tx_\epsilon & \text{ if } \epsilon_i =1  \\
            x_\epsilon & \text{ if } \epsilon_i =0.
        \end{array}
    \end{cases}
\end{equation*}
In other words, $S_i(F_i\boldsymbol{x}, F^i\boldsymbol{x}) = (F_i\boldsymbol{x}, T^{\hkbraket{k-1}} F^i\boldsymbol{x})$. The measure $\mu^{\hkbraket{k}}$ is invariant under $Q^{\hkbraket{k}}_0(T)$ and therefore it is invariant under the group $Q^{\hkbraket{k}}(T) = \langle Q^{\hkbraket{k}}_0(T), T^{\hkbraket{k}}\rangle$. Moreover, whenever $\mu$ is ergodic, $\mu^{\hkbraket{k}}$ is ergodic under the action of $Q^{\hkbraket{k}}(T)$. 
 For a topological system $(X,T)$, we write 
$$Q^{\hkbraket{k}}(X) = \overline{\{ S (x, \ldots, x) \mid x \in X, S \in Q^{\hkbraket{k}}(T)\}} .$$

Similarly, for an $s$-step minimal pronilsystem $(Z,m ,T)$ and a point $z \in Z$, 
\begin{equation} \label{eq cubic with a fixed point}
    Q_{z}^{\hkbraket{k}}(Z) = \{ \boldsymbol z \in Q^{\hkbraket{k}}(Z) \mid z_{\underline{0}} = z \} = \overline{ \{ S (z, \ldots, z) \mid S \in Q_0^{\hkbraket{k}} (T) \}}
\end{equation}
has unique invariant measure, where the fact that the second equality holds true for any minimal distal system can be found in \cite[Proposition 4.2]{Host_Kra_Maass_nilstructure:2010}.

\subsection{Erd\H{o}s cubes and Erd\H{o}s progressions} Following \cite{kmrr1,kmrr25}, for a measure preserving system $(X,\mu,T)$, we say that 
    \begin{itemize}
        \item $(x_0, x_1, \ldots, x_k) \in X^{k+1}$ is an Erd\H{o}s progression if there is a sequence $(c(n))_{n \in \N}$ such that 
        \begin{equation*}
            \lim_{n \to \infty} (T \times \cdots \times T) ^{c(n)} (x_0, \ldots, x_{k-1}) = (x_1, \ldots, x_k)
        \end{equation*}
        \item $\boldsymbol x \in X^{\hkbraket{k}}$ is an Erd\H{o}s cube if there are sequences $(c_1(n))_{n \in \N}, \ldots, (c_k(n))_{n \in \N}$ such that 
        \begin{equation*}
            \lim_{n \to \infty} (T^{\hkbraket{k-1}})^{c_i(n)} (F_i \boldsymbol{x}) = F^i \boldsymbol x.
        \end{equation*}
        where $F_i \boldsymbol{x} $ and $F^i \boldsymbol{x}$ are defined in \eqref{eq lower and upper faces}.
    \end{itemize}

We denote $E^{(k)}_a(X) \subset X^{k+1}$, (resp. $E^{\hkbraket{k}}_a(X) \subset X^{\hkbraket{k}}$) the set of Erd\H{o}s progressions (resp. Erd\H{o}s cubes) starting at a point $a \in X$. By \cite[Lemma 5.16]{kmrr1}, $E^{\hkbraket{k}}(X) \subset Q^{\hkbraket{k}}(X)$. Notice that one can turn every Erd\H{o}s progression in an Erd\H{o}s cube, to see that we first define the $k$-vertex embedding

\begin{definition} \label{def ell-vertex}
    The $k$-vertex embedding is the function $\psi_k \colon X^{k+1} \to X^{\hkbraket{k}}$ given by
\begin{equation*}
    (\psi_k(x_0, \ldots,x_k))_{\epsilon} = x_{|\epsilon|} \quad \text{ for all } \epsilon \in \hkbraket{k}.
\end{equation*}
\end{definition}

\begin{lemma} \label{lemma erdos progression vs erdos cube}
    Let $x=(x_0, x_1 , \ldots, x_{k-1})$ be an Erd\H{o}s progression, then $\psi_{k}(x)$ is an Erd\H{o}s cube. Conversely if $\boldsymbol x$ is an Erd\H{o}s cube and $x_\epsilon = x_{\epsilon'}$ whenever $|\epsilon| = | \epsilon'|$, then $(x_{0^k}, x_{0^{k-1}1}, x_{0^{k-2}11}, \ldots, x_{1^k})$ is an Erd\H{o}s progression. 
\end{lemma}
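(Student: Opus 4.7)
The plan is to unfold both implications directly from the definitions, with essentially no calculation beyond keeping track of the indexing $y_\epsilon = x_{|\epsilon|}$ coming from \Cref{def ell-vertex}.

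For the forward direction, I would start from an \Erdos{} progression $(x_0, x_1, \ldots, x_\ell)$ with witness sequence $c(n)$, set $\boldsymbol{y} = \psi_\ell(x_0, \ldots, x_\ell)$ so that $y_\epsilon = x_{|\epsilon|}$, and claim that the single scalar sequence $c_i(n) := c(n)$ witnesses $\boldsymbol{y}$ as an \Erdos{} cube for every $i \in [\ell]$. Fixing $i$, each $\epsilon \in \hkbraket{\ell}$ with $\epsilon_i = 0$ has a unique partner $\epsilon' \in \hkbraket{\ell}$ obtained by flipping the $i$-th coordinate to $1$, and $|\epsilon'| = |\epsilon| + 1$. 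The coordinate of $(T^{\hkbraket{\ell-1}})^{c(n)} F_i \boldsymbol{y}$ at position $\epsilon$ is then $T^{c(n)} y_\epsilon = T^{c(n)} x_{|\epsilon|}$, which tends to $x_{|\epsilon|+1} = y_{\epsilon'}$ by the progression hypothesis (noting that $|\epsilon|\in\{0,1,\ldots,\ell-1\}$, so this limit is one of the guaranteed ones). Since the corresponding coordinate of $F^i \boldsymbol{y}$ is precisely $y_{\epsilon'}$, this proves $(T^{\hkbraket{\ell-1}})^{c(n)} F_i \boldsymbol{y} \to F^i \boldsymbol{y}$ componentwise, hence in $X^{\hkbraket{\ell-1}}$.

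For the converse, given an \Erdos{} cube $\boldsymbol{x}$ with witnesses $c_1(n), \ldots, c_\ell(n)$ in which $x_\epsilon$ depends only on $|\epsilon|$, I would define $z_j := x_\epsilon$ for any $\epsilon$ with $|\epsilon| = j$ (well-defined by hypothesis) and take $c(n) := c_1(n)$. For each $j \in \{0, 1, \ldots, \ell - 1\}$ I can choose some $\epsilon \in \hkbraket{\ell}$ with $\epsilon_1 = 0$ and $|\epsilon| = j$, possible since $j \leq \ell - 1$. Reading off the $\epsilon$-coordinate of the limit $(T^{\hkbraket{\ell-1}})^{c_1(n)} F_1 \boldsymbol{x} \to F^1 \boldsymbol{x}$ yields $T^{c(n)} z_j \to z_{j+1}$. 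Because the same sequence $c(n)$ works for all $j$ simultaneously, $(z_0, z_1, \ldots, z_\ell)$ is an \Erdos{} progression, and by the listed indices this is precisely $(x_{0^\ell}, x_{0^{\ell-1}1}, \ldots, x_{1^\ell})$.

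I do not foresee any real obstacle: both directions reduce to bookkeeping with $|\epsilon|$ and the face operators $F_i, F^i$. The only point that deserves a verification is that in the forward direction a single scalar sequence $c(n)$ simultaneously shifts all relevant coordinates in the right direction, which is exactly what the joint convergence in the \Erdos{} progression definition encodes.
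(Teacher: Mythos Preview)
Your proposal is correct and follows essentially the same approach as the paper: both directions are handled by taking $c_i(n)=c(n)$ for all $i$ in the forward direction, and by reading off a single face convergence in the converse. The only cosmetic difference is that for the converse you use the sequence $c_1(n)$ and pick $\epsilon$ with $\epsilon_1=0$, whereas the paper uses $c_\ell(n)$ and the specific vertices $0^{\ell-j}1^{j-1}0$; the two choices are interchangeable by the symmetry of the cube, and your phrasing is arguably cleaner since it avoids the step-by-step chaining via the identities $x_{0^{\ell-j-1}1^{j}0}=x_{0^{\ell-j}1^{j}}$.
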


In particular, $\psi_{k}(E^{(k)}_a(X)) = E^{\hkbraket{k}}_a(X) \subset Q^{\hkbraket{k}}(X)$. 

\begin{proof}
    The first inclusion is clear, indeed from the definition of Erd\H{o}s progression we take $(c_i(n))_{n \in \N} = (c(n))_{n \in \N}$ for all $i = 1, \ldots, k$. 
    
    For the converse let $ \boldsymbol x \in E^{\llbracket k \rrbracket}(X)$ for which $x_\epsilon = x_{\epsilon'}$ whenever $|\epsilon| = | \epsilon'|$ and denote $(c(n))_{n \in \N} = (c_k (n))_{n \in \N} \subset \N$ the sequences such that $\lim_{n \to \infty} (T^{\hkbraket{k-1}})^{c(n)} (F_k \boldsymbol{x}) = F^k \boldsymbol{x}$. In particular, 
    \begin{equation*}
       \lim_{n \to \infty} T^{c(n)} x_{0^k} = x_{0^{k-1}1}.
    \end{equation*}
    By definition of Erd\H{o}s cube we then get that for the same sequence $(c(n))_{n \in \N}$, $\displaystyle \lim_{n \to \infty} T^{c(n)} x_{0^{k-2}10} = x_{0^{k-2}11}$, 
    and by assumption $x_{0^{k-2}10} = x_{0^{k-1}1}$, so $ \displaystyle \lim_{n \to \infty} T^{c(n)} x_{0^{k-1}1} = x_{0^{k-2}11}$. We then proceed similarly using at each step that $x_{0^{k-1-j}1^{j}0} = x_{0^{k-j}1^{j}}$. 
\end{proof}

The connection between Erd\H{o}s cubes and progressions to sumsets was stablish by Kra, Moreira, Richter and Robertson, first in \cite{kmrr1} and later in \cite{kmrr_BB,kmrr25}.

\begin{lemma}[Lemma 2.2 \cite{kmrr25} and Corollary 4.6 \cite{kmrr25}] \label{Erdos cubes and progressions and sumsets} 

    Let $a \in X$ be a point in a measure preserving system $(X, \mu,T)$.
    \begin{enumerate}
        \item Suppose $E_1, \ldots, E_k$ are open sets, if there exists an Erd\H{o}s progression $(a,x_1, \ldots,x_k)$ with $x_i \in E_i$, for all $i= 1, \ldots, k$ then  there exists an infinite set $B \subset \N$ such that $B^{\oplus i} \subset \{ n \in \N \mid T^n a \in E_i\}$ for all $i = 1, \ldots, k $.
        \item Suppose $E_\epsilon$ are open sets for $\epsilon \in \hkbraket{k}^*$, if there exists an Erd\H{o}s cube $\boldsymbol{x}$ with $x_{0^k} =a$ and $x_\epsilon \in E_\epsilon$ for all $\epsilon \in \hkbraket{k}^*$, then there are infinite sets $B_1, \ldots, B_k \subset \N$ such that $\sum_{i=1}^k \epsilon_i B_i \subset \{ n \in \N \mid T^n a \in E_\epsilon\}$ for all $\epsilon \in \hkbraket{k}^*$.
    \end{enumerate}
\end{lemma}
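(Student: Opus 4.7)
The plan is to reduce the hypothesis to the existence of a $k$-cube starting at $x$ whose other vertices cluster near $y$, and then exploit the closing property of $k$-cubes in a $(k-1)$-step pronilsystem to force $x = y$.

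First, I use the cube-closing property to pick $V$. In a minimal $(k-1)$-step pronilsystem, a $k$-cube $\boldsymbol{a} \in Q^{\hkbraket{k}}(X)$ is uniquely determined by its vertices $(a_\epsilon)_{\epsilon \in \hkbraket{k}^*}$---the Host--Kra closing property for $k$-cubes, reflecting $(k-1)$-step nilpotency. (I expect this to be recorded in \Cref{sec RPk} through the triviality of $\RP^{[k-1]}(X)$.) Consequently the restriction map
\[
\Pi\colon Q^{\hkbraket{k}}(X)\longrightarrow X^{\hkbraket{k}^*}, \qquad \boldsymbol{a}\longmapsto (a_\epsilon)_{\epsilon\in \hkbraket{k}^*}
\]
is a continuous injection between compact spaces, hence a homeomorphism onto its image; in particular, $a_{\underline{0}}$ depends continuously on $(a_\epsilon)_{\epsilon\in \hkbraket{k}^*}$. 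Since the diagonal cube $(y,\ldots,y)\in Q^{\hkbraket{k}}(X)$ has all vertices equal to $y$ and $x \ne y$, continuity of this dependence at the point $(y,\ldots,y)$ allows me to choose an open neighborhood $V$ of $y$ such that whenever $(a_\epsilon)_{\epsilon\in\hkbraket{k}^*}\in \Pi(Q^{\hkbraket{k}}(X))$ satisfies $a_\epsilon\in \overline{V}$ for every $\epsilon$, the unique cube completion has $a_{\underline{0}}\ne x$.

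Next, I produce a cube from the sumset hypothesis. Suppose toward contradiction that $B_1,\ldots,B_k$ are all infinite and that $\sum_{j=1}^{k}\epsilon_j B_j\subset N(x,V)$ for every $\epsilon\in \hkbraket{k}^*$. Pick $b_j^{(n)}\in B_j$ with $b_j^{(n)}\to\infty$ for each $j$. The point
\[
\boldsymbol{a}^{(n)} = \big(T^{\epsilon_1 b_1^{(n)}+\cdots+\epsilon_k b_k^{(n)}}x\big)_{\epsilon\in \hkbraket{k}} = S_1^{b_1^{(n)}}\cdots S_k^{b_k^{(n)}}(x,\ldots,x)
\]
lies in $Q^{\hkbraket{k}}(X)$ by definition of the cube space (it is obtained from the diagonal by an element of the upper-face group $Q_0^{\hkbraket{k}}(T)$). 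The sumset hypothesis gives $a_\epsilon^{(n)}\in V$ for every $\epsilon\in \hkbraket{k}^*$, while $a_{\underline{0}}^{(n)}=x$. By compactness of $X^{\hkbraket{k}}$ and closedness of $Q^{\hkbraket{k}}(X)$, pass to a subsequence to obtain a limit $\boldsymbol{a}\in Q^{\hkbraket{k}}(X)$ with $a_{\underline{0}}=x$ and $a_\epsilon\in \overline{V}$ for every $\epsilon\in \hkbraket{k}^*$. This directly contradicts the choice of $V$.

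The main obstacle is the closing property for $k$-cubes in a $(k-1)$-step pronilsystem---this is precisely where the step hypothesis is consumed. It is the cube-theoretic counterpart of the triviality of $\RP^{[k-1]}(X)$ for such systems and should emerge naturally from the paper's \Cref{sec RPk}. The remaining ingredients---extracting a limiting cube from the sumset data, and the continuity selection of $V$---are routine given the definition of $Q^{\hkbraket{k}}(X)$ as the orbit closure of the diagonal under the upper-face group.
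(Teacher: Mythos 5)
There is a fundamental mismatch: what you have written is not a proof of \cref{Erdos cubes and progressions and sumsets} at all, but an argument for a different statement, namely the parity-obstruction result \cref{cor k-1 step pronil bcd no} (that in a minimal $(k-1)$-step pronilsystem with $x\neq y$ there is a neighborhood $V$ of $y$ so that no sumset pattern with all $B_j$ infinite fits inside $N(x,V)$). The lemma you were asked to prove is a \emph{positive, constructive} statement about an arbitrary measure preserving system: from the hypothesis that an Erd\H{o}s progression $(a,x_1,\ldots,x_\ell)$ (resp.\ an Erd\H{o}s cube $\boldsymbol{x}$ with $x_{0^\ell}=a$) has its coordinates in prescribed open sets $E_i$ (resp.\ $E_\epsilon$), one must \emph{construct} an infinite set $B$ with $B^{\oplus i}\subset N(a,E_i)$ (resp.\ infinite sets $B_1,\ldots,B_\ell$ with $\sum_i \epsilon_i B_i\subset N(a,E_\epsilon)$). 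Nothing in your proposal engages with this: there is no use of the defining sequences $(c(n))$ or $(c_i(n))$ of the progression/cube, no construction of the sets $B$ or $B_1,\ldots,B_\ell$, no role for the open sets $E_i$, and your hypotheses (minimal $(k-1)$-step pronilsystem, two distinct points $x\neq y$, a single neighborhood $V$) simply do not appear in the lemma. A contradiction argument of the kind you give can only rule patterns out; it cannot produce the infinite sumsets the lemma asserts.

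The actual argument (this is Lemma 2.2 and Corollary 4.6 of the cited work of Kra--Moreira--Richter--Robertson, which the paper invokes rather than reproves) is an iterative selection: for part (1), the definition of an Erd\H{o}s progression gives a sequence $(c(n))_{n\in\N}$ with $(T\times\cdots\times T)^{c(n)}(a,x_1,\ldots,x_{\ell-1})\to(x_1,\ldots,x_\ell)$; using openness of the $E_i$ and continuity of $T$, one chooses $b_1<b_2<\cdots$ from this sequence recursively so that at each stage the finitely many conditions $T^{b_{j_1}+\cdots+b_{j_i}}a\in E_i$ (for increasing tuples already committed to, together with the new element) are preserved, which is possible because each new $b_m=c(n_m)$ can be taken far enough along the sequence; part (2) is the analogous construction run with the face sequences $(c_i(n))$ of the cube. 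As a separate remark, your argument read on its own terms is a reasonable direct route to \cref{cor k-1 step pronil bcd no} (the paper instead passes through \cref{lemma equiv RP BCD} and \cref{prop top dynamics cube caracterization}, using that $\RP^{[k-1]}$ is trivial on a $(k-1)$-step pronilsystem), but the uniqueness-of-completion claim you rely on would still need a precise reference, and in any case it does not address the statement at hand.
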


In \cite{kmrr1}, the authors require to the system they are working with to have continuous map to there pronilfactors. This is not a restrictive assumption because any ergodic system has a topological extension that fulfills that property (see \cite[Lemma 5.8]{kmrr1}).

\begin{definition} \label{def cont pronilfactor}
    Let $(X, \mu,T)$ be an ergodic system and,  for every $k \in \N$, let $(Z_k, m_k, T)$ be its pronilfactor. We say that $(X, \mu,T)$ has \emph{topological pronilfactors} if for all $k \in \N$ the factor map $\pi_k \colon X \to Z_k$ is a topological factor map.  
\end{definition}

 In \cite[Proposition 4.3]{hernandez_kousek_radic2025density_Hindman} we remark that if $\pi_k \colon X \to Z_k$ is continuous for every $k \in \N$ then $\pi_\infty \colon X \to Z_\infty$ is also continuous. 
    We recall that in this paper, for $\Xmt$ and $k \in \N$, we denote by $Z_k$ the maximal $k$-step pronilsystem, that is $Z_k$ is a (topological) inverse limit of $k$-step nilsystems. That said, when we say that $\pi_k\colon X\to Z_k$ is a topological factor in \cref{def cont pronilfactor}, we are fixing $Z_k$ to be the pronilsystem, rather than an arbitrary measurably isomorphic model. This is crucial as it is illustrated \cite{Durand&Frank&Maass:2019} and \cite[Example 6.1]{glasner_Huang_Shao_Ye2020regionally} for $Z_1$, and more recently for general $k \in \N$ in  \cite{akhmedli2026top_vs_measurable_nil} in order to distinguish between $Z_k$ and $X/\RP^{[k]}(X)$ for minimal systems, see \Cref{sec RPk} for the definition of $\RP^{[k]}(X)$. 

To find Erd\H{o}s cubes and progressions we define measures constructed as lift of measures in pronilsystems. For the definition of the lift of a measure see \Cref{sec useful lemmas}. Following \cite{kmrr25} we define

\begin{definition} \label{definition of sigma}
    Let $(X, \mu,T)$ an ergodic system with topological pronilfactor. For $k \in \N$ fixed, $\pi_{k-1} \colon X \to Z_{k-1} $ the continuous factor map and $a \in \gen(\mu,\Phi)$ for some F\o lner sequence $\Phi$, we denote
    $\xi_a \in \cM(Z_{k-1}^k)$ the unique invariant measure of $\overline{\{(T \times T^2\times \cdots \times T^k)^n (\pi_{k-1}(a), \ldots, \pi_{k-1}(a)) \mid n \in \Z \} }$. The measure $\sigma_a \in \cM(X^{k+1})$ equals $\delta_a \times \tilde \sigma_a$ where $\delta_a$ is the Dirac measure of $a$ and $\tilde \sigma_a$ is the lift of $\xi_a$. 
\end{definition}

In \cite{kmrr25} they called that measure \emph{progressive}. 

\begin{definition} \label{definition of sigma cubic}
    Let $(X, \mu,T)$ an ergodic system with topological pronilfactor. For $k \in \N$ fixed, $\pi_{k-1} \colon X \to Z_{k-1} $ the continuous factor map and $a \in \gen(\mu,\Phi)$ for some F\o lner sequence $\Phi$, we denote
    $\xi_a^{\hkbraket{k}} \in \cM(Z_{k-1}^{\hkbraket{k}})$ the unique invariant measure of $Q_{\pi_{k-1}(a)}^{\hkbraket{k}}(Z_{k-1})$ (see \eqref{eq cubic with a fixed point}) and $\xi_a^{\hkbraket{k}^*}$ the projection $\xi_a^{\hkbraket{k}}$ to the coordinates $\hkbraket{k}^*$. The measure $\sigma_a^{\hkbraket{k}} \in \cM(X^{\hkbraket{k}})$  equals $\delta_a \times \tilde \sigma_a^{\hkbraket{k}}$ where $\delta_a$ is the Dirac measure of $a$ and $\tilde \sigma_a ^{\hkbraket{k}}$ is the lift of $\xi_a^{\hkbraket{k}^*}$.  
\end{definition}



\begin{lemma}[Theorem 4.9 \cite{kmrr25} and Theorem 7.5 \cite{kmrr1}] \label{lemma characterization progressive measures} \label{lemma sigma^[k] property} 
    Let $(X, \mu,T)$ be an ergodic system with topological pronilfactor, $a \in \gen(\mu,\Phi)$ for some F\o lner sequence $\Phi$, $k \in \N$. 
    \begin{enumerate}
        \item If $\sigma_a \in \cM(X^{k+1})$ as in \cref{definition of sigma}, then the set of Erd\H{o}s progressions $E^{(k)}_a(X)$ is dense in $\supp \sigma_a$.
        \item If $\sigma_a^{\hkbraket{k}} \in \cM(X^{\hkbraket{k}})$ as in \cref{definition of sigma cubic}, then the set of Erd\H{o}s cubes $E^{\hkbraket{k}}_a(X)$ is dense in $\supp \sigma_a^{\hkbraket{k}}$. 
    \end{enumerate}
  
\end{lemma}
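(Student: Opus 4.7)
My plan is to relate the measures $\sigma_a$ and $\sigma_a^{\hkbraket{\ell}}$ to multiple ergodic averages of indicator functions along the orbit of $a$, evaluate those averages using the Host-Kra structure theorem, and then extract convergent subsequences whose limits realize Erd\H{o}s progressions and cubes inside prescribed open sets.

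For (1), given a box neighborhood $U_0 \times \cdots \times U_\ell \subset X^{\ell+1}$ of a point in $\supp \sigma_a$ (automatically $a \in U_0$ since $\sigma_a = \delta_a \times \tilde \sigma_a$), I would first use Host-Kra theory \cite{Host_Kra_nonconventional_averages_nilmanifolds:2005} together with the genericity $a \in \gen(\mu, \Phi)$ to prove
\[
    \lim_{N \to \infty} \frac{1}{|\Phi_N|} \sum_{n \in \Phi_N} \prod_{i=1}^\ell \1_{U_i}(T^{in} a) = \tilde\sigma_a(U_1 \times \cdots \times U_\ell) > 0,
\]
which matches the definition of $\tilde\sigma_a$ as the lift of $\xi_a$ from $Z_{\ell-1}$. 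Positivity yields infinitely many $n_k \to \infty$ with $T^{i n_k} a \in U_i$ for every $i$; after shrinking each $U_i$ to an open subset whose closure is still contained in $U_i$ and passing to a convergent subsequence by compactness of $X$, I obtain limits $T^{i n_k} a \to y_i \in U_i$. Part (2) follows the parallel cubic construction: using the Host-Kra cube identity and the unique ergodicity of $Q^{\hkbraket{\ell}}(T)$ acting on $Q_{\pi_{\ell-1}(a)}^{\hkbraket{\ell}}(Z_{\ell-1})$, I would show the corresponding cubic average of $\prod_{\epsilon \in \hkbraket{\ell}^*} \1_{U_\epsilon}(T^{\epsilon \cdot h} a)$ over $h \in [H]^\ell$ tends to $\tilde\sigma_a^{\hkbraket{\ell}}\bigl(\prod_{\epsilon \neq \underline{0}} U_\epsilon\bigr) > 0$, producing multi-parameters $h^{(k)}$ with $\|h^{(k)}\| \to \infty$ such that, after extracting a convergent subsequence, $T^{\epsilon \cdot h^{(k)}} a \to y_\epsilon \in U_\epsilon$ for every $\epsilon \in \hkbraket{\ell}^*$.

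The main obstacle I expect is the final step in each case: verifying that the extracted limits genuinely assemble into an Erd\H{o}s progression or cube. Indeed, $T^{i n_k} a \to y_i$ does not by itself imply $T^{n_k} y_{i-1} \to y_i$, since the operators $T^{n_k}$ are not equicontinuous on $X$ in general; the triangle bound
\[
d(T^{n_k} y_{i-1}, y_i) \leq d(T^{n_k} y_{i-1}, T^{n_k} T^{(i-1)n_k} a) + d(T^{i n_k} a, y_i)
\]
reduces matters to continuity of $T^{n_k}$ on the sequence $T^{(i-1)n_k} a \to y_{i-1}$. My strategy is to push this down to the pronilfactor $Z_{\ell-1}$, which is distal: along a further subsequence, $T^{n_k}$ converges in the Ellis semigroup of $Z_{\ell-1}$ to a homeomorphism, so the relations $T^{n_k} \pi_{\ell-1}(y_{i-1}) \to \pi_{\ell-1}(y_i)$ hold on the nose. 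Lifting the resulting Erd\H{o}s progression of pronilfactor fibers back up to $X$ invokes the openness of $\pi_{\ell-1}$ together with the fact, encoded by $\tilde\sigma_a$, that each fiber over $\xi_a$-almost every point has positive conditional mass inside the prescribed open sets; the cubic setting is handled analogously using the cubic groups $Q^{\hkbraket{\ell}}(T)$ and the genericity provided by the unique invariant measure on $Q_{\pi_{\ell-1}(a)}^{\hkbraket{\ell}}(Z_{\ell-1})$.
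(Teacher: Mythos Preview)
First, note that the paper does not supply its own proof of this lemma; it is imported from \cite[Theorem~4.9]{kmrr25} and \cite[Theorem~7.5]{kmrr1}. Your outline has a genuine gap exactly where you yourself flag the obstacle. Granting that along a subsequence $T^{n_k}$ converges in the Ellis semigroup of the distal factor $Z_{\ell-1}$ to a homeomorphism, you obtain an Erd\H{o}s progression $(\pi_{\ell-1}(a), z_1, \ldots, z_\ell)$ in $Z_{\ell-1}$ together with points $y_i \in U_i \cap \pi_{\ell-1}^{-1}(z_i)$. But the desired relation $T^{n_k} y_{i-1} \to y_i$ in $X$ does not follow: the fibers over $Z_{\ell-1}$ may be large, and nothing in your argument forces the particular limits $y_i = \lim_k T^{in_k}a$ to cohere under the diagonal action upstairs. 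Your appeal to ``the openness of $\pi_{\ell-1}$'' is both vague about the mechanism and unavailable, since the lemma is stated under the hypothesis of a \emph{topological} pronilfactor (continuity of $\pi_{\ell-1}$) only, not an open one; and even with openness, knowing that suitable preimages exist in each $U_i$ is not the same as knowing that the specific limits $y_i$ you extracted assemble into an Erd\H{o}s progression.

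The route taken in the cited references is structurally different. Rather than trying to extract the whole progression from a single multi-correlation sequence, one first proves that the lifted measure $\sigma_a$ (resp.\ $\sigma_a^{\hkbraket{\ell}}$) is \emph{progressive} in the sense of \cite{kmrr25}: whenever an open box has positive $\sigma_a$-mass, there are infinitely many $n$ for which its intersection with a diagonally $T^n$-shifted window still has positive mass --- compare the recurrence \eqref{eq infty progressive.} exploited in the proof of \cref{U^infty dynamical}. This recurrence is established directly for the measure $\sigma_a$, using the structure of $\xi_a$ on the pronilfactor and the definition of the lift, and it allows the sequence $(c(n))_{n\in\N}$ witnessing the Erd\H{o}s progression to be built \emph{iteratively}, one term at a time, so that the relations $T^{c(n)}x_{i-1} \to x_i$ hold by construction rather than needing a posteriori verification. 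Your one-shot approach does not yield this recurrence, and without it the lifting problem you describe has no known resolution.
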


Notice that by the previous lemma, if $U_1, \ldots, U_k \subset X$ are open sets such that $\sigma_a(X \times U_1 \times \cdots \times U_k)>0$, then there exists an Erd\H{o}s progression $(a, x_1, \ldots, x_k)$ such that $x_i \in U_i$ for all $i=1, \ldots, k$. Similarly for Erd\H{o}s cubes.

The consequence of \cite[Theorem 7.5]{kmrr25} is actually a $\sigma_a^{\hkbraket{k}}$-almost surely statement, but for our purposes it suffices with the density result. For the progressive measure case, the $\sigma_a$-almost surely result remains open except for the Kronecker case, that is for $k=2$, that was proved in \cite{kmrr_BB}.

We finish this section by noticing that in \cite[Section 7]{kmrr1}, the original definition of $\sigma_a^{\hkbraket{k}}$ differs from \cref{definition of sigma cubic} but it is equivalent. The equality is a direct consequence of the unique ergodicity of $Q_{\pi_{k-1}(a)}^{\hkbraket{k}}(Z_{k-1})$ and the digit permutation invariance of  $\sigma_a^{\hkbraket{k}}$ (\cite[Corollary 7.3]{kmrr1}), namely  
\begin{equation*}
    \phi\sigma^{\hkbraket{k}}_a = \sigma_a^{\hkbraket{k}} \quad \text{ for all digit permutation } \phi \colon X^{\hkbraket{k}} \to X^{\hkbraket{k}}.
\end{equation*}

\subsection{Furstenberg correspondence principle} \label{sec furstenberg correspondence}

In this section, We use the letter $I$ for a countable set of indices, where by \emph{countable} we include finite sets. We use a version of Furstenberg  correspondence principle which slightly generalize \cite[Proposition 1, Chapter 22]{Host_Kra_nilpotent_structures_ergodic_theory:2018} from finitely many to countably many sequences $\phi_i \in \ell^{\infty}(\N)$  (see also \cite[section 4]{Host_Kra09}).  The proof is basically identical by noticing that if $c_i = \norm{\phi_i}_{\infty}$ then the space $\prod_{i \in I}(c_i \D)$ for $\D$ the unit disc in $\C$, is a compact metric space even when $I$ is infinite.

\begin{proposition} \label{prop furstenberg correspondence 2}

Let $\phi_i \in \ell^{\infty}(\N)$ be a countable family of bounded sequences with $\norm{\phi_i}_{\infty} \leq C$ for some $C >0$ and every $i \in I$. Let $k \in \N \cup \{ \infty\}$ and suppose that  $\phi_i$ admits correlations along a F\o lner sequence $\Phi$ and that $\norm{\phi_i}_{U^k(\Phi)} =0$ for every $i \in I$. 
Then there exist an ergodic system $(X, \mu,T) $, a F\o lner sequence $\Psi $, a point $a \in \gen(\mu, \Psi)$ and continuous functions $f_i \in C(X)$ such that $\norm{f_i}_{U^k(X, \mu,T)} = 0 $ and
    \begin{align*}
        f_i(T^n a ) = \phi_i(n)  \quad \text{ for all } n \in \N, i \in I.
    \end{align*}
\end{proposition}

In the previous proposition, when $k = \infty$ we are using the abuse of notation $\norm{\phi_i}_{U^\infty(\Phi)} =0$ to denote $\norm{\phi_i}_{U^s(\Phi)} =0$ for all $s \in N$ (similarly for $\norm{f_i}_{U^\infty(X, \mu,T)} = 0 $). This case is only relevant for \Cref{sec infinite uniform sets}.

\begin{remark} \label{remark clopen for correspondence}
    In Proposition  \ref{prop furstenberg correspondence 2}, every sequence $\phi_i \in \ell^{\infty}(\N)$ has a continuous function $f_i \colon X \to \C$ that \emph{corresponds to it}. 
    Fixing $i\in I$, we can say more things about this function $f_i \in C(X)$. For instance if $R_i = \phi_i(\N) = \{ z \in \C \mid \phi_i(n)=z$ for some $n \in \N\}$ then the image $f_i(X)=\overline{R_i} \cup \{0\}$. 
In particular, if $\phi_i \in \ell^{\infty}(\N)$ takes finitely many values, $R_i=\{ r_1, \ldots, r_s\}$, then $f_i$ takes finitely many values and the set $E_j= f_i^{-1} (\{r_j\})$ is clopen for all $j=1, \ldots, s$. In that case, $f_i = \sum_{j=1}^{s} r_j \cdot \1_{E_j}$. We deduce the following case of interest. 
\end{remark}

\begin{corollary} \label{furstenberg correspondence for sets}
    Let $A_i \subset \N$ be $U^k(\Phi)$-uniform sets for $i \in I$ and $k \in \N \cup \{ \infty\}$. There exist  an ergodic system $(X, \mu,T) $, a F\o lner sequence $\Psi = (\Psi_N)_{N \in \N}$, a point $a \in \gen(\mu, \Psi)$ and $U^k(X, \mu,T)$-uniform clopen sets $E_i$ such that $\mu(E_i) = \diff_\Phi (A_i) >0$ and
    \begin{equation} \label{eq correspondence sequences}
        T^n a \in E_i \iff n \in A_i 
    \end{equation}
    for all $i\in I$, $n \in \N$.
\end{corollary}

\begin{proof}
    Direct from \cref{prop furstenberg correspondence 2} with $\phi_i = \1_{A_i} - \diff_\Phi(A_i)$ for all $i \in I$ and the properties discussed in \cref{remark clopen for correspondence}.
\end{proof}
        
\section{Continuous disintegration} \label{sec continuous disintegration}

We introduce the concept of continuous disintegration. This is a generalization of the continuous ergodic decomposition introduced in \cite{kmrr1}. This ensures compatibility between measurable and topological properties. 

\subsection{Definitions and properties}

\begin{definition} \label{defi cont desintegration}
  Let $(X, \mu,T)$ be a system and $(Y, \nu, S)$ be a factor. We say that the map $y \mapsto \mu_y$ from $Y \to \cM(X)$ is a disintegration of $\mu$ over $\nu$ if 
  \begin{itemize}
      \item $y \mapsto \mu_y (E)$ is measurable for every Borel $E \subset X$
      \item For every $f \in L^\infty(\mu)$
      \begin{equation*}
          \int_X f \diff \mu = \int_Y \int_X f \diff \mu_y \diff \nu(y).
      \end{equation*}
  \end{itemize}
 If the factor map $\pi \colon X \to Y$ is continuous and $y \mapsto \mu_y $ is continuous, then we say that it is a continuous disintegration over $\nu$. Taking $x \mapsto \mu_{\pi(x)}$ we can also think the continuous disintegration as a map from $X$ to $\cM(X)$ that is constant in the fibers of the map $\pi$. 
\end{definition}

In what follows we prove that the continuous disintegration respects natural relations between dynamical systems. 

\begin{proposition} \label{prop cont disintegration and conditional expectation}
    A system $\Xmt$ admits a continuous disintegration over a factor $(Y,\nu,T)$ with continuous factor map $\pi \colon X \to Y$ if and only if for every $f \in C(X)$ the conditional expectation $\E(f \mid Y)$ agrees $\nu$-almost surely with a continuous function. 
\end{proposition}

The proof is a direct generalization of \cite[Lemma 6.2]{kmrr1}. 

\begin{corollary} \label{cor concatenation of cont disintegration}
    Suppose that a measure preserving system $(X, \mu, T)$ admits a continuous disintegration over a factor $(Y,\nu,T)$ and also $(Y,\nu,T)$ admits a continuous disintegration over a factor $(Z,\lambda,T)$. Then $(X, \mu, T)$ admits a continuous disintegration over $(Z,\lambda,T)$. 
\end{corollary}

\begin{proof}
    By \cref{prop cont disintegration and conditional expectation}, for every $f \in C(X)$, $\E(f \mid Y)$ agrees  $\nu$-almost surely with a continuous function, let $g \in C(Y)$ be such that $g  = \E(f \mid Y)$. Thus, using again \cref{prop cont disintegration and conditional expectation}, we get that $\E( g \mid Z)$ agrees with a continuous function $\lambda$-almost surely. Thus, since $\lambda$-almost surely $\E(f \mid Z) = \E( \E(f \mid Y) \mid Z)$ and $g  = \E(f \mid Y)$, then $\E(f \mid Z) = \E( g \mid Z)$  which agrees, $\lambda$-almost surely, with a continuous function concluding the proof. 
\end{proof}

\begin{proposition} \label{prop inverse limit cont decomp}
     Suppose that for each $j \in \N$ we have a system $(X_j, \mu_j, T)$ that admits a continuous disintegration over a factor $(Y_j,\nu_j,T)$, suppose there are continuous factor maps $p_j \colon X_{j+1} \to X_j$ and $q_j \colon Y_{j+1} \to Y_j$. Then the inverse limit $(X, \mu,T)$ admits a continuous disintegration over the inverse limit $(Y,\nu,T)$. 
\end{proposition}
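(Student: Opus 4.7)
The plan is to apply Proposition \ref{prop cont disintegration and conditional expectation}: it suffices to show that for every $f \in C(X)$, $\E_\mu(f \mid Y)$ agrees $\mu$-a.s.\ with a continuous function on $Y$. First I would reduce to a dense family of test functions. The subalgebra
\begin{equation*}
    \cA = \bigcup_{k \in \N} \{g \circ \pi_k : g \in C(X_k)\}
\end{equation*}
is dense in $C(X)$ in the sup norm by Stone--Weierstrass: it contains the constants, it is closed under sums, products and conjugation (since for $\ell \geq k$ one has $g \circ \pi_k = (g \circ p_k \circ \cdots \circ p_{\ell-1}) \circ \pi_\ell$, so any two elements of $\cA$ can be pulled back from the same level), and it separates points of the inverse limit $X$. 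Moreover, the class of $f \in C(X)$ for which $\E_\mu(f \mid Y)$ admits a continuous representative is closed under uniform limits: if $\E_\mu(f_n \mid Y) = H_n$ with $H_n \in C(Y)$ and $f_n \to f$ in sup norm, then $\|H_n - H_m\|_{L^\infty(\nu)} \leq \|f_n - f_m\|_\infty$, so $(H_n)$ is uniformly Cauchy on $\supp \nu$ and extending its uniform limit via Tietze produces the desired continuous representative. Hence it is enough to treat $f = g \circ \pi_k$ with $g \in C(X_k)$.

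For such $f$, I would exploit the inverse-limit structure of $Y$. Let $\tau_i \colon Y \to Y_i$ denote the canonical continuous factor map and $\cB_i = \tau_i^{-1}(\cB(Y_i))$; these form an increasing filtration with $\cB(Y) = \sigma(\bigcup_i \cB_i)$. Testing against $\cB_i$-measurable functions of the form $\phi \circ \tau_i$ and using the compatibility $\tau_i \circ \pi_Y = \rho_i \circ \pi_i$ (where $\pi_Y \colon X \to Y$ is the induced factor map and $\rho_i \colon X_i \to Y_i$ is the level-$i$ factor map) together with $\pi_k = p_k \circ \cdots \circ p_{i-1} \circ \pi_i$ for $i \geq k$, a direct computation gives
\begin{equation*}
    \E_\mu(g \circ \pi_k \mid \cB_i) = \E_{\mu_i}(g \circ p_k \circ \cdots \circ p_{i-1} \mid Y_i) \circ \tau_i \qquad \nu\text{-a.s.}
\end{equation*}
By the level-$i$ hypothesis applied to $g \circ p_k \circ \cdots \circ p_{i-1} \in C(X_i)$, the right-hand side admits a continuous representative $H_i := h_i \circ \tau_i \in C(Y)$ with $h_i \in C(Y_i)$. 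Martingale convergence then yields $H_i \to \E_\mu(g \circ \pi_k \mid Y)$ in $L^2(\nu)$ as $i \to \infty$.

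The main obstacle is to upgrade this $L^2$ convergence to \emph{uniform} convergence of $(H_i)$ on $\supp \nu$; once achieved, the uniform limit is continuous on $\supp \nu$ and extends by Tietze to $Y$, giving the sought continuous representative of $\E_\mu(f \mid Y)$. Equivalently, one must show that the continuous measure-valued maps
\begin{equation*}
    y \longmapsto (p_k \circ \cdots \circ p_{i-1})_* \mu_{i, \tau_i(y)} \in \cM(X_k)
\end{equation*}
converge in the weak-$*$ topology uniformly in $y \in \supp \nu$ as $i \to \infty$. I plan to establish this by combining the weak-$*$ continuity of each $y_i \mapsto \mu_{i,y_i}$ with compatibility relations between the continuous disintegrations at successive levels and compactness of $\cM(X_k)$, following the strategy of \cite[Lemma 6.2]{kmrr1} that the excerpt cites as the model for the present argument.
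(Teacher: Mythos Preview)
Your reduction to functions of the form $g\circ\pi_k$ via Stone--Weierstrass, and the observation that the class of $f\in C(X)$ with $\E_\mu(f\mid Y)$ admitting a continuous representative is closed under uniform limits, are both correct. So is the martingale identity $\E_\mu(g\circ\pi_k\mid\cB_i)=\E_{\mu_i}(g\circ p_k\circ\cdots\circ p_{i-1}\mid Y_i)\circ\tau_i$ and the resulting $L^2$ convergence to $\E_\mu(f\mid Y)$. The paper itself gives no argument here, deferring to \cite[Lemma~6.9]{kmrr1}; note that the reference you invoke, Lemma~6.2, is the model for \cref{prop cont disintegration and conditional expectation}, not for the present inverse-limit statement.

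The genuine gap is exactly where you place it: upgrading $L^2$ martingale convergence of the continuous functions $H_i$ to uniform convergence on $\supp\nu$. Your stated plan---weak-$*$ continuity of $y_i\mapsto\mu_{i,y_i}$, unspecified ``compatibility relations between the continuous disintegrations at successive levels,'' and compactness of $\cM(X_k)$---is too vague to carry the argument. In particular, the most natural compatibility one might hope for, $(p_i)_*\mu_{i+1,\tau_{i+1}(y)}=\mu_{i,\tau_i(y)}$, can fail under the hypotheses as stated: take $X_1=\{0,1\}$ with uniform measure and $Y_1$ trivial, $X_2=\{(0,0),(1,1)\}$ with uniform measure and $Y_2=\{0,1\}$ the second coordinate, with all maps the obvious projections; then $(p_1)_*\mu_{2,y_2}=\delta_{y_2}\neq\mu_1=\mu_{1,\ast}$. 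So whatever relations you intend must be subtler. Compactness of $\cM(X_k)$ yields subsequential weak-$*$ limits of $\lambda_i(y)=(p_k\circ\cdots\circ p_{i-1})_*\mu_{i,\tau_i(y)}$ for each fixed $y$, but neither uniqueness of the limit nor uniformity in $y$ follows for free. As written, the proposal is an accurate outline with the crux left as a promise; you should either extract the actual mechanism from \cite[Lemma~6.9]{kmrr1} or supply an explicit uniform Cauchy estimate for the sequence $(\lambda_i)$ in $C(Y,\cM(X_k))$.
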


The proof of this proposition is also a direct generalization of \cite[Lemma 6.9]{kmrr1}.

\begin{lemma} \label{lemma continuity for neighborhoods}
    Let $\Xmt$ be a system that admits a continuous disintegration $y \mapsto \mu_y$ over a factor system $(Y,\nu,T)$. Suppose that $W \subset X$ is an open set such that $\mu_{y_0}(W)>0$ for some $y_0 \in Y$, then there exists a neighborhood $V\subset Y$ of $y_0$ such that for all $y \in V$, $\mu_{y}(W)>0$. 
\end{lemma}

\begin{proof}
    Take $W \subset X$ and $y_0 \in Y$ as in the statement, and consider an open set $W' \subset X$ with $\overline{W'} \subset W$ and $\mu_{y_0}(W') >0$. Consider $f \colon X \to [0,1]$ a continuous function for which $f(x) = 1$ for $ x \in W'$ and $f(x)=0$ for $x \in X \setminus W $. By construction $\int f \diff \mu_{y_0} >0$. Notice that $y \mapsto \int f \diff \mu_y$ is continuous and therefore there exists a neighborhood $V$ of $y_0$ such that $\int f \diff \mu_y >0$ for all $y \in V$. Finally, by construction, $0<\int f \diff \mu_y \leq \mu_y(W)$ for all $y\in Y$. 
\end{proof}

\begin{proposition} \label{prop the support is contained}
    Let $\Xmt$ be a system that admits a continuous disintegration $y \mapsto \mu_y$ over a factor system $(Y,\nu,T)$. For every $y \in \supp \nu$, $\supp \mu_y \subset \pi^{-1}(y)$. 
\end{proposition}

\begin{proof}

    Suppose there exists $y_0 \in \supp \nu$ and $x_0 \in \supp \mu_{y_0}$ such that $\pi(x_0) \neq y_0$ and consider $U,V \subset Y$ neighborhoods of $\pi(x_0)$ and $y_0$ respectively such that $U \cap V = \emptyset$. Let $\tilde U = \pi^{-1}(U)$, then since $x_0 \in \tilde U$, $\mu_{y_0}(\tilde U) >0$. Using that $y_0 \in \supp \nu$, by \cref{lemma continuity for neighborhoods}, we get that
    \begin{equation*}
        \nu( \{ y \in V \mid \mu_{y}(\tilde U) >0 \})>0.
    \end{equation*}
    In particular, since $\tilde U \cap \pi^{-1}(y) = \emptyset$,  
    \begin{equation*}
        \nu( \{ y \in Y \mid \mu_{y} (\pi^{-1}(y)) < 1 \})>0,
    \end{equation*}
    which contradicts \cite[Theorem 3.3 (ii)]{kmrr_BB} (see also \cite[Theorem 5.14]{Einsiedler_Ward11}).
\end{proof}

One can require something stronger than \cref{prop the support is contained}, we introduce the fully supported continuous disintegration. This is a key property for the sumset characterization of neighborhoods in pronilsystems.  

\begin{definition}
    Let $\pi \colon (X, \mu,T) \to (Y, \nu,S)$ be a continuous factor map with continuous disintegration $(\mu_y)_{y \in Y}$. 
    A continuous disintegration $(\mu_y)_{y \in Y}$ is \emph{fully supported} if for all $y \in Y$, $\supp \mu_y = \pi^{-1}(y)$.
\end{definition}

Like for the continuous disintegration, the fully supported continuous disintegration also respects many natural relations between dynamical systems. 

\begin{proposition}
    Let $\pi \colon (X, \mu,T) \to (Y, \nu,S)$ and $\theta \colon (Y, \nu,S) \to (Z, \rho, R)$ be continuous factor maps with fully supported continuous disintegration $(\mu_y)_{y \in Y}$ and $(\nu_z)_{z \in Z}$. Then the disintegration $(\tilde \mu_z)_{z \in Z}$ given by
    \begin{equation*}
        \tilde \mu_z = \int \mu_y \diff \nu_z(y)
    \end{equation*}
    is continuous and fully supported. 
\end{proposition}

\begin{proof}
    Pick $z \in Z$ and $x \in (\theta \circ \pi )^{-1}(z)$ arbitrary. Consider $f \colon X \to [0, \infty)$ continuous with $f(x)>0$. Since $(\mu_y)_{y \in Y}$ is fully supported, $x \in \supp \mu_{\pi(x)}$ and then  $\int f \diff \mu_{\pi(x)}  >0 $. Moreover, the map $y \mapsto \int f \diff \mu_y $ is continuous and therefore there exists a neighborhood $V$ of $\pi(x)$ such that $\int f \diff \mu_{y}  >0 $ for all $y \in V$.

    Now suppose, towards contradiction, that $\int f \diff \tilde \mu_z =0$ for $z = (\theta \circ \pi )(x)$, that is $\int \int f(x) \diff \mu_y (x)  \diff \nu_z(y) =0$ then 
    \begin{equation*}
        \nu_z\Big( \Big\{ y \mid \int f \diff \mu_y =0 \Big\} \Big) = 1.
    \end{equation*}
    In particular $\nu_z(V) =0$, but $\pi(x) \in \theta^{-1}(z) = \supp \nu_z$ and $V$ is a neigborhood of $\pi(x)$ which is a contradiction with the fact that $(\nu_z)_{z\in Z}$ is fully supported. 
   \end{proof}

\begin{lemma} \label{lemma inverse limit cont decomp fully supported}
     Suppose that for each $j \in \N$ we have a system $(X_j, \mu_j, T)$ that admits a fully supported continuous disintegration over a factor $(Y_j,\nu_j,T)$. Suppose there are continuous factor maps $p_j \colon X_{j} \to X_{j-1}$ and $q_j \colon Y_{j} \to Y_{j-1}$. Then the inverse limit $(X, \mu,T)$ admits a fully supported continuous disintegration over the inverse limit $(Y,\nu,T)$. 
\end{lemma}

\begin{proof}
     Pick $y \in Y$ and $x \in \pi^{-1}(y)$ arbitrary, we need to prove that $x \in \supp \mu_y$.
     
     Consider a real-valued non-negative continuous function $f \in C(X_j)$ with $f( \pi_j(x))>0$, where $\pi_j \colon X \to X_j$ is the continuous factor map such that $p_j \circ \pi_j = \pi_{j-1}  $. Similarly $\theta_j \colon Y \to Y_j$ is the continuous factor map such that $q_j \circ \theta_j = \theta_{j-1}$. Now
     \begin{align*}
         \int_X f \circ \pi_j \diff \mu_y = \E( f \circ \pi_j \mid Y) (y) = \E( f \mid Y_j) \circ \theta_j(y) = \int_{X_j} f \diff \mu_{j,\theta_j(y)}
     \end{align*}
     where $(\mu_{j,y})_{y \in Y_j}$ is the fully supported continuous disintegration of $\mu_j$. Now if $\psi_j \colon X_j \to Y_j$ is the factor map, then $\pi_j(x) \in \psi^{-1}_j(\theta_j(y))$, hence since $f( \pi_j(x))>0$ and $(\mu_{j,y})_{y \in Y_j}$ is fully supported $\int_{X_j} f \diff \mu_{j,\theta_j(y)} >0 $. Therefore $ \int_X (f \circ \pi_j) \diff \mu_y >0$. 

     With this, if $V$ is of the form $\pi_j^{-1}(V')$ for some neighborhood $V' \subset X_j$ of $\pi_j(x)$, then $\mu_y(V) >0$.  Finally, since $\bigcup_{j \in \N} \{ \pi_j^{-1}(V') \mid V' \subset X_j$ open$\}$ is a basis for the topology of $X$ we get that $x \in \supp \mu_y$. 
\end{proof}

The property of having a fully supported continuous disintegration is restrictive, with the following proposition we can deduce a variety of extensions that connot support such disintegration. 

\begin{proposition} \label{prop cont fully suppoerted disintegration implies open map}
    Let  $ (X, \mu,T)$ and $(Y, \nu,S)$ be dynamical system with $\supp \nu = Y$ and $\pi \colon (X, \mu,T) \to (Y, \nu,S)$ be a continuous factor maps with fully supported continuous disintegration $(\mu_y)_{y \in Y}$.  The factor map $\pi \colon X \to Y$ is open. 
\end{proposition}

\begin{proof}
    Consider an arbitrary open set $W \subset X$ and $y \in \pi(W)$. In particular, there exists $x \in \pi^{-1}(y) \cap W$, and by fully supported disintegration $\mu_y(W) >0$. By \cref{lemma continuity for neighborhoods}, since $y \in \supp \nu$, there exists an open neighborhood $V$ of $y$ such that for all $y' \in V$, $\mu_{y'}(W) >0 $. Now, $V \subset \pi(W)$, otherwise take $z \in V \setminus \pi(W)$, then $\pi^{-1}(z) \cap W = \emptyset$  but by definition $\mu_z(\pi^{-1}(z)) = 1$ which contradicts the fact that $\mu_z(W) >0$. With this we proved that $\pi(W)$ is open for every $W \subset X$ open.  
\end{proof}

\subsection{Continuous disintegration for pronilsystems} We now focus on the $s$-step nilsystem case. For $G$ a $s$-step nilpotent Lie group, we denote by $G_i$ the $i$\emph{th iterated commutator of} $G$, that is $G_1 = G$ and $G_i = [G_{i-1}, G] $ and when $X= G / \Gamma$, $e_X \in X$ is the image of the identity $e_G \in G$ under the quotient map. For every $1 \leq i \leq s$, $G_i$ is a rational normal subgroup where we say that a subgroup $H \leq G$ is \emph{rational} if the subset $H \cdot e_X$ is closed in $X$. 
If $H$ is a rational normal subgroup of $G$ we have that 
\begin{equation*}
    H \backslash X = G/(H\Gamma) = \frac{G/H}{(H\Gamma) /H}
\end{equation*}
thus the continuous group homomorphism given by the quotient map $q_H \colon G \to G/H$ induce a factor map between $(X, m_X,T)$ and the nilsystem $(H \backslash X, m_{H \backslash X}, T)$ where $T \colon \frac{G/H}{(H\Gamma) /H}\to \frac{G/H}{(H\Gamma) /H}$ is given by the translation by the element $q_H(\tau)$, where $\tau \in G$ is the element that defines the translation $T\colon X \to X$. In this case we have continuous disintegration,

\begin{proposition} \cite[Proposition 16, Chapter 10]{Host_Kra_nilpotent_structures_ergodic_theory:2018} \label{prop cont between normal subgroup}
    Let $X = G/\Gamma$ be a nilmanifold with Haar measure $m_X$ and $H$ be a normal rational subgroup of $G$. For each $x \in X$, $Y_x = H \cdot x$ is a nilmanifold with Haar measure $\nu_x$. Then the map $x \mapsto \nu_x$ defines a continuous disintegration of $m_X$ over the Haar measure of $H \backslash X$.
\end{proposition}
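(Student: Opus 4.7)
My plan is to proceed in three steps: identify the fibers and define $\nu_x$, verify continuity of $x \mapsto \nu_x$, and establish the disintegration identity.

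First, since $H$ is normal in $G$, for any $x = g\Gamma \in X$ one has $Y_x = Hg\Gamma = g(H\Gamma)/\Gamma = g \cdot \bar H$, where $\bar H := H\Gamma/\Gamma$. Rationality of $H$ ensures that $H\Gamma$ is closed in $G$ and that $H \cap \Gamma$ is cocompact in $H$, so $\bar H \cong H/(H\cap \Gamma)$ is a compact nilmanifold carrying a unique Haar probability measure $\nu_0$. I would define $\nu_x$ to be the pushforward of $\nu_0$ by the left-translation map $L_g \colon \bar H \to g\bar H$. Independence from the choice of representative $g$ follows from the fact that $\nu_0$ is invariant under left translation by elements of $H\Gamma$, and in particular by every $\gamma \in \Gamma$.

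Second, to establish continuity, I would invoke \cref{prop cont disintegration and conditional expectation}, which reduces the task to showing that for every $f \in C(X)$ the function $\phi(x) := \int_X f\,d\nu_x$ agrees $m_X$-almost surely with a continuous function. Writing $\phi(g\Gamma) = \int_{\bar H} f(gy)\,d\nu_0(y)$ and invoking uniform continuity of $f$ on the compact space $X$ together with joint continuity of the $G$-action, one verifies $\phi$ is continuous on $G$. Since $\phi$ is constant on $H\Gamma$-cosets (because $L_h$ for $h \in H\Gamma$ permutes $\bar H$ while preserving $\nu_0$), it descends to a continuous function on $H\backslash X = G/(H\Gamma)$.

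Finally, for the disintegration identity I would appeal to the classical Weil decomposition of Haar measure along the closed subgroup $H\Gamma$ of $G$:
\begin{equation*}
\int_G F\, dm_G = \int_{G/(H\Gamma)} \int_{H\Gamma/\Gamma} F(gu)\, dm_{H\Gamma/\Gamma}(u\Gamma)\, dm_{G/(H\Gamma)}(gH\Gamma)
\end{equation*}
for suitable $F \colon G \to \C$. Applied to a $\Gamma$-right-invariant lift of $f \in L^\infty(m_X)$, and combined with the identifications $G/(H\Gamma) = H\backslash X$, $H\Gamma/\Gamma = \bar H$, and the recognition of the inner integral as $\int_X f\,d\nu_{g\Gamma}$, this yields the desired identity. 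The main subtle point is the well-definedness of $\nu_x$ and the descent of $\phi$ to $H\backslash X$, both of which rely crucially on normality of $H$ (to write $Hg\Gamma$ as a left translate of $\bar H$) and on rationality (to ensure $\bar H$ is compact and $\nu_0$ is a probability measure); the remainder is routine measure-theoretic bookkeeping.
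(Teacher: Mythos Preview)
The paper does not supply a proof of this proposition; it is quoted from \cite[Proposition~16, Chapter~10]{Host_Kra_nilpotent_structures_ergodic_theory:2018} and used as a black box. Your outline is a correct and standard argument for the result.

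Two minor comments. The appeal to \cref{prop cont disintegration and conditional expectation} is superfluous: once you have the candidate family $(\nu_x)_{x\in X}$, weak-$*$ continuity of $x \mapsto \nu_x$ is by definition the continuity of $x \mapsto \int f\,d\nu_x$ for each $f \in C(X)$, which is exactly what you verify directly. For the well-definedness step, the cleanest justification that $(L_\gamma)_*\nu_0 = \nu_0$ for $\gamma \in \Gamma$ is uniqueness: $(L_\gamma)_*\nu_0$ is a probability measure on $\bar H$ that remains $H$-invariant (since $L_h L_\gamma = L_\gamma L_{\gamma^{-1}h\gamma}$ and $\gamma^{-1}h\gamma \in H$ by normality), hence coincides with $\nu_0$. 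For the disintegration identity one can also bypass Weil's formula: the measure $\int_{H\backslash X} \nu_{\bar x}\,dm_{H\backslash X}(\bar x)$ is $G$-invariant because $(L_{g_0})_*\nu_x = \nu_{g_0 x}$ and $m_{H\backslash X}$ is $G$-invariant, so it equals $m_X$ by uniqueness of the Haar measure. None of this affects validity; your route is sound.
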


Notice that in the previous proposition, since $\nu_x$ is the Haar measure of $H \cdot x $ we also get that the disintegration is fully supported.  

\begin{corollary} \label{prop cont k-step max pronilfactors}
    Let $(X, m_X,T)$ be an ergodic $s$-step pronilsystem, then for every $k \leq s$, $(X, m_X,T)$ admits a fully supported continuous disintegration over $Z_{k}(X)$.
\end{corollary}

\begin{proof}
    By \cite[Chapter 13]{Host_Kra_nilpotent_structures_ergodic_theory:2018}, if $(X, m_X,T)$ is an ergodic $s$-step nilsystem,  $Z_k = G_{k+1} \backslash X$ and since $G_{k+1}$ is a rational normal subgroup of $G$ we conclude with \cref{prop cont between normal subgroup}. For the general case when $X$ is an inverse limit of $s$-step pronilsystem, we conclude by Propositions \ref{prop inverse limit cont decomp} and \ref{lemma inverse limit cont decomp fully supported}. 
\end{proof}

\subsection{Bounded marginals and lift of measures} \label{sec useful lemmas} 

For a probability space $(X, \cX, \mu)$ and a measure $\tau \in \cM(X^s)$ for $s\geq 1$, we say that $\tau$ \emph{has marginals bounded by} $\mu$, if for all $i = 1, \ldots, s$ there exists $c_i >0$ such that $\tau_i \leq c_i \mu$, that is
    \begin{equation*}
        \tau_i(A) \leq c_i \mu(A) \ \text{ for all } A \in \cX
    \end{equation*}
    where $\tau_i \in \cM(X)$ is the $i$th marginal of $\tau$.  
     If $(X, \mu, T)$ is a measure preserving system and $(Y , \nu, S)$ is a factor, with factor map $\pi \colon X \to Y$ and $\xi \in \cM(Y^s)$ is a measure with marginals bounded by $\nu$, then we say that the measure $\tau \in \cM(X^s)$ is \emph{the lift of} $\xi$ if for all $f_1, \ldots , f_s \in L^{\infty}(\mu)$, 
    \begin{equation} \label{def lift of a measure}
        \int_{X^s} f_1 \otimes \cdots \otimes f_s \diff \tau = \int_{Y^s} \E(f_1 \mid Y) \otimes \cdots \otimes \E(f_s \mid Y ) \diff \xi.  
    \end{equation}

\begin{lemma}  \label{technical lemma for positivity open maps}
    Let $(X, \mu, T)$ a measure preserving system and $(Y, \nu, S)$ a factor with continuous factor map $\pi \colon X \to Y$ and fully supported continuous disintegration. 
    
    Let $s \in \N$ and $\xi \in \cM(Y^s)$ be a measure with marginals bounded by $\nu$ and let $\tau$ be the lift of $\xi$. 
    Suppose $E_1, \ldots, E_s \subset X$ are open sets
    with positive measure and denote $\tilde E_i = \pi(E_i) \subset Y$ for $i = 1 \ldots, s$. If  $\xi ( \tilde E_1 \times \cdots \times \tilde E_s ) >0 $, then 
    \begin{equation*}
        \tau( E_1 \times \cdots \times E_s) >0. 
    \end{equation*}
\end{lemma}

\begin{proof}
    Let $E_1, \ldots, E_s \subset X$ and $\tilde E_1, \ldots, \tilde E_s \subset Y$ as in the statement. Notice that, by \cref{prop cont fully suppoerted disintegration implies open map}, the sets $\tilde E_i$ are open, in particular $\xi ( \tilde E_1 \times \cdots \times \tilde E_s )$ is well defined. Now,
    \begin{align*}
        \tau(E_1 \times \cdots \times E_s)  &=\int_{Y^s}  \E(\1_{E_1} \mid Y)\otimes \cdots \otimes  \E(\1_{E_s} \mid Y) \diff  \xi \\
        &\geq \int_{\tilde E_1 \times \cdots \times \tilde E_s}  \E(\1_{E_1} \mid Y)\otimes \cdots \otimes  \E(\1_{E_s} \mid Y) \diff  \xi. 
    \end{align*}
    Define, $F= \E(\1_{E_1} \mid Y)\otimes \cdots \otimes  \E(\1_{E_S} \mid Y) $. Notice that, $F$ is positive for $\xi$-almost every $\boldsymbol y \in \tilde E_1 \times \cdots \times \tilde E_s$, indeed, 
    \begin{align*}
        \xi( \{ \boldsymbol y \in \tilde E_1 \times \cdots \times \tilde E_s \mid F(y) =0\}) = \sum_{i=1}^k  \xi (\{ \boldsymbol y \in \tilde E_1 \times \cdots \times \tilde E_s \mid \E(\1_{E_i} \mid Y) (y_i) =0 \}) \\
        \leq \sum_{i=1}^k c_i \cdot \nu(\{ y_i \in \tilde E_i \mid \E(\1_{E_i} \mid Y) (y_i) =0 \}) \\
 = \sum_{i=1}^k c_i \cdot \mu(\{ x \in \pi^{-1}(\tilde E_i) \mid \mu_{\pi(x)}(E_i) =0 \}) =0.
    \end{align*}
    where the last equality is given by the fact that if $x \in \pi^{-1}(\tilde E_i)$ then there exist $x' \in E_i$ for which $\pi(x)=\pi(x')$ and then since the disintegration is fully supported, $0 <\mu_{\pi(x')}(E_i) = \mu_{\pi(x)}(E_i)$. 
    Finally,
    \begin{align*}
         \tau(E_1 \times \cdots \times E_s) 
        &\geq \int_{\tilde E_1 \times \cdots \times \tilde E_s}  F \diff  \xi  \\
        &\geq \sum_{n =0}^{\infty} \frac{1}{n+1}  \xi\bigg( \Big\{ y \in \tilde E_1 \times \cdots \times \tilde E_s \mid \frac{1}{n+1} \leq F(y) < \frac{1}{n} \Big\}\bigg) >0 
    \end{align*}
    where the positivity of the right hand side is ensured by the positivity of $F$ for almost every point in $\tilde E_1 \times \cdots \times \tilde E_s$.
\end{proof}

\section{Sumsets and return times on nilsystem} \label{sec nilsystem return times}

In this section, we start with general consequences for systems with fully supported continuous disintegration over the pronilfactors and then we apply those results to study topological properties of pronilsystems. 

For a system $(X,T)$, a point $x_0 \in X$ and a set $U \subset X$ we denote the set of return times $N(x_0,U) = \{  n \in \N \mid T^n x_0 \in U \} $. We also introduce the following notation, for a set $X$, a function $f \colon X \to \C$ and $i = 1, \ldots,k$, the function $f^{[i]} \colon X^k \to \C$ is  given by $(x_1, \ldots, x_k) \mapsto f(x_i) $. In particular, for $k =2$ and $f,g \colon X \to \C$, $f^{[1]} \cdot g ^{[2]} = f \otimes g$. 

\subsection{Sumsets in return time sets.}

The main goal of this section is to prove the following theorem.

\begin{theorem} \label{cor combinatorial BB and BCD points} \label{theorem main dynamical very important}
     Let $(X,\mu,T)$ be an ergodic system with topological pronilfactors, $a \in \gen(\mu, \Phi)$, for some F\o lner sequence $\Phi$  and $k \geq 2$. Let $\tilde X = \supp \mu$ and let $x_1, \ldots, x_k \in \tilde X$ with $\pi_{k-1}(a) = \pi_{k-1}(x_i)$ and $V_i$ open neighborhood of $x_i$ for $i=1, \ldots, k$. 
     \begin{enumerate}
         \item If $(\tilde X , \mu, T)$ admits a fully supported continuous disintegration over $Z_{k-1}(\mu)$, then there exists an infinite set $B \subset \N$ such that for all $i=1, \ldots, k$
         \begin{equation*}
             B^{\oplus i} \subset N(a,V_i),
         \end{equation*}
         \item If for $\ell \geq k$, $(\tilde X , \mu, T)$ admits a fully supported continuous disintegration over $Z_{\ell-1}(\mu)$, then $ 1 \leq \ell_1 < \ell_2 < \cdots < \ell_k \leq \ell$, there exist infinite sets $B_1 , \ldots, B_\ell \subset \N$ such that for all $i=1, \ldots, k$
         \begin{equation*}
             \sum_{j=1}^\ell \epsilon_j B_j \subset N(a,V_i) \quad \text{ for } \epsilon \in \hkbraket{\ell} \text{ with } |\epsilon| = \ell_i.
         \end{equation*}
     \end{enumerate}
\end{theorem}

To prove \cref{cor combinatorial BB and BCD points} we start with the following lemma. 

\begin{lemma}  \label{theorem mu almost every BB}
    Let $(X, \mu, T)$ be an ergodic system with topological pronilfactors and $k \geq 2$. Let $\tilde X = \supp \mu$ and suppose $(\tilde X , \mu, T)$ admits a fully supported continuous disintegration over $Z_{k-1}(\mu)$.
    
    For every $1 \leq \ell_1 < \cdots < \ell_k \leq \ell$ and for $\mu$-almost every $x_0 \in X$, if $x_1, \ldots, x_k \in \tilde X$ such that $\pi_{k-1}(x_0) = \pi_{k-1}(x_i)$  and $V_i$ is a neighborhood of $x_i$ for $i=1, \ldots, k$, then there exists an Erd\H{o}s progression $(x_0, y_1, \ldots, y_\ell) \in X^{\ell+1}$ such that $y_{\ell_i} \in V_i$ for all $i = 1, \ldots, k$. 
\end{lemma}

\begin{proof}
    Fix $1 \leq \ell_1 < \cdots < \ell_k \leq \ell$ and for any point $x \in X$, and let $\xi_x \in \cM(Z_{\ell-1}^\ell)$ and $\sigma_x \in \cM(X^{\ell+1})$ be measures introduced in \cref{definition of sigma}. Likewise define
    \begin{equation*}
        N_{\ell_1, \ldots, \ell_k} (x) = \overline{\{ (T^{\ell_1} \times \cdots \times T^{\ell_k})^n (\pi_{k-1}(x), \ldots, \pi_{k-1}(x))  \mid n \in \Z\} } \subset Z_{k-1}^k 
    \end{equation*}
    and $\tilde \xi_x$ its unique $(T^{\ell_1} \times \cdots \times T^{\ell_k})$-invariant measure. 
    We highlight that $\xi_x \in \cM(Z_{\ell-1}^{\ell})$ and $\tilde \xi_x \in \cM(Z_{k-1}^{k})$. It can be derived from \cite{ziegler2005nonconventional} (see also \cite{Bergelson_Host_Kra05}) that for $\mu$-almost every $x \in X$, if $f_1, \ldots, f_k \in C(X)$ then
    \begin{equation} \label{eq equality that happends mu almost surely}
        \int_{X^{\ell+1}} \prod_{i=1}^k f_i^{[\ell_i]} \diff \sigma_x = \int_{Z^{k}_{k-1}} \bigotimes_{i=1}^k \E(f_i \mid Z_{k-1}) \diff \tilde \xi_x. 
    \end{equation}

    Pick $x_0 \in X$ in the set of full measure for which \eqref{eq equality that happends mu almost surely} holds true and for which $x_0 \in \gen(\mu, \Phi)$ for some F\o lner sequence. By unique ergodicity and minimality of $N_{\ell_1, \ldots, \ell_k} (x)$, for every neighborhood $\tilde U$ of $\pi_{k-1}(x_0)$
    \begin{equation} \label{eq positivity nbh}
        \tilde \xi_{x_0} ( \tilde U \times \cdots \times \tilde U) >0. 
    \end{equation}
    Consider $V_i$ an aribitrary neighborhood of $x_i$ and $\tilde V = \pi_{k-1}(V_1) \cap \cdots \cap \pi_{k-1}(V_k)$. Notice that, by \cref{prop cont fully suppoerted disintegration implies open map}, $\tilde V$ is an open subset of $Z_{k-1}$ and $\pi_{k-1}(x_0) \in \tilde V$, hence by \eqref{eq positivity nbh}, $\tilde \xi_{x_0}( \tilde V_1 \times \cdots \tilde V_k) \geq \tilde \xi_{x_0}( \tilde V \times \cdots \tilde V) >0$. Thus, since $\Xmt$ admits a fully supported continuous disintegration over $Z_{k-1}(\mu)$, by  \cref{technical lemma for positivity open maps} and \eqref{eq equality that happends mu almost surely}, we get that
    \begin{align*}
        \sigma_{x_0} \Big( \bigcap_{i=1}^k \{ (x_0, y_1, \ldots, y_\ell) \mid y_{\ell_i} \in V_i \} \Big) = \int_{X^{\ell+1}} \prod_{i=1}^k \1_{V_i} ^{[\ell_i]} \diff \sigma_{x_0} \\ \geq \int_{(\supp \mu)^{\ell+1}} \prod_{i=1}^k \1_{V_i} ^{[\ell_i]} \diff \sigma_{x_0} >0.
    \end{align*}
     With that, using \cref{lemma characterization progressive measures} (i), we conclude that for every neighborhoods $V_i$ of $x_i$ there is an Erd\H{o}s progression $(x_0,y_1, \ldots, y_\ell)$ such that $y_{\ell_i} \in V_i$. 
\end{proof}

In \Cref{sec open q}, we discuss why the conclusion of \cref{theorem mu almost every BB} is only true in general for $\mu$-almost every $x\in X$. However, when $k= \ell$ we can change the $\mu$-almost surely quantifier by a for all, namely

\begin{proposition} \label{theorem BB points}
     Let $(X, \mu, T)$ be an ergodic with topological pronilfactors, $\Phi$ a F\o lner sequence, $a \in \gen(\mu,\Phi)$ and $k \geq 2$. Let $\tilde X = \supp \mu$ and suppose $(\tilde X , \mu, T)$ admits a fully supported continuous disintegration over $Z_{k-1}(\mu)$. If $x_1, \ldots, x_k \in \tilde X$ such that $\pi_{k-1}(a) = \pi_{k-1}(x_i)$  and $V_i$ is a neighborhood of $x_i$ for $i=1, \ldots, k$, then there exists an Erd\H{o}s progression $(a, y_1, \ldots, y_k) \in X^{k+1}$ such that $y_{i} \in V_i$ for all $i = 1, \ldots, k$. 
\end{proposition}

\begin{proof}
    In this case \eqref{eq equality that happends mu almost surely} happens for every generic point $a \in \gen(\mu, \Phi)$, because in this case, the measure $\xi_a$ from \cref{definition of sigma} and the measure $\tilde \xi_a$ in the proof of \Cref{theorem mu almost every BB} coincide. 
\end{proof}

We derive the following theorem for the minimal case. 

\begin{theorem}  \label{thrm minaml same image in Z_{k-1}}
    Let $(X,\mu,T)$ be a minimal system with topological pronilfactors, $ k \geq 2$ and $\Xmt$ admits a fully supported continuous disintegration over $Z_{k-1}(\mu)$. Let $x_0, \ldots, x_k \in X$ with $\pi_{k-1}(x_0) = \pi_{k-1}(x_i)$ for $i=1, \ldots, k$. Then for every $1 \leq \ell_1 < \cdots < \ell_k \leq \ell$ there exists $\boldsymbol{y} \in Q^{\llbracket \ell \rrbracket}(X)$ such that $y_{\underline{0}} = x_0$ and for all $i=1, \ldots, k$
    \begin{equation*}
        y_{\epsilon} = x_i \quad \text{ for all } \epsilon \in \hkbraket{\ell} \text{ with } |\epsilon| = \ell_i.
    \end{equation*}
\end{theorem}

\begin{proof} 

    We prove first the statement for a set of full measure. 
    
    Fixing $1 \leq \ell_1 < \ell_2 < \cdots < \ell_k \leq \ell$ suppose $x_0 \in X$ is in the set of full measure where \cref{theorem mu almost every BB} holds true. Then, for every $n \in \N$, we denote  $B(x_i, 1/n)$ be the ball of center $x_i$ and radius $1/n$, there exists an Erd\H{o}s progression $(x_0,y_1^{(n)}, \ldots, y_\ell^{(n)})$ such that $y_{\ell_i}^{(n)} \in B(x_i, 1/n)$ for all $i=1, \ldots, k$. 
    Consider $\psi_\ell \colon X^{\ell +1} \to X^{\hkbraket{\ell}}$ the $\ell$-vertex embedding, by \cref{lemma erdos progression vs erdos cube},  $\psi_\ell(x_0, y_1^{(n)}, \ldots, y_\ell^{(n)}) \in Q^{\hkbraket{\ell}}(X)$.
    Since $Q^{\hkbraket{\ell}}(X)$ is closed we conclude by taking limit of (possibly a subsequence of) $\psi_\ell(x_0, y_1^{(n)}, \ldots, y_\ell^{(n)})$. 
    This proves the statement for $\mu$-almost every $x_0 \in X$. 
    
    Now in general, take $x_0, \ldots, x_k \in X$ as in the statement. As before, for a given $n \in \N$, let $V_i = B(x_i, 1/n)$ be the ball of center $x_i$ and radius $1/n$, for all $i =0,1, \ldots, k$. Since $\pi_{k-1} \colon X \to Z_{k-1}$ open, then every $\pi_{k-1}(V_i) = \tilde V_i$ is a neigborhood of $\pi_{k-1}(x_0)$. Take $\tilde V = \tilde V_1 \cap \cdots \cap \tilde V_k$ and consider $V'_i = V_i \cap \pi_{k-1}^{-1}(\tilde V)$, then clearly $V'_i $ is an open neighborhood of $x_i$ contained in $V_i$. 
    By minimality $\supp \mu = X$,  so there exists $x \in V_0'$ such that \cref{theorem mu almost every BB} holds true. Also, by construction, for every $i=1, \ldots,k$ there exists $y_i \in V_i'$ such that $\pi_{k-1}(y_i) = \pi_{k-1}(x)$. Thus, for all $n \in \N$ and $i=1, \dots, k$, we find an Erd\H{o}s progression $ (z^{(n)}_0, \ldots, z_\ell^{(n)})$ such that $z_0^{(n)} \in B(x_0,1/n)$ and $z_{\ell_i}^{(n)} \in B(x_i, 1/n)$. We conclude again by taking limit of $\psi_{\ell}(z^{(n)}_0, \ldots, z_\ell^{(n)}) \in  Q^{\hkbraket{\ell}}(X)$. 
\end{proof}

\begin{proposition} \label{theorem BCD points}
     Let $(X,\mu,T)$ be an ergodic system with topological pronilfactors, $a \in \gen(\mu, \Phi)$ for some F\o lner sequence $\Phi$  and $2 \leq k \leq \ell$. Denote $\tilde X = \supp \mu$ and suppose $(\tilde X , \mu, T)$ admits a fully supported continuous disintegration over $Z_{\ell-1}(\mu)$. 
     
     If $x_1, \ldots, x_k \in \tilde X$ with $\pi_{k-1}(a) = \pi_{k-1}(x_i)$ and $V_i$ open neighborhood of $x_i$ for $i=1, \ldots, k$, then for all integers $1 \leq \ell_1 < \cdots < \ell_k \leq \ell$ there exists an Erd\H{o}s cube $\boldsymbol{y} \in E^{\llbracket \ell \rrbracket}_a(X)$ such that for all $i=1, \ldots, k$ 
    \begin{equation*}
       \boldsymbol y_{\epsilon} \in V_i \quad \text{ for all } \epsilon \in \hkbraket{\ell} \text{ with } |\epsilon| = \ell_i.
    \end{equation*} 
\end{proposition}

\begin{proof}

    We denote $\tilde a = \pi_{\ell-1}(a)$, $\tilde x_i = \pi_{\ell-1}(x_i)$ and $\theta_{k-1} \colon Z_{\ell-1} \to Z_{k-1}$ the natural factor map which fulfills $\pi_{k-1} = \theta_{k-1} \circ \pi_{\ell-1}$.
    
    By \cref{thrm minaml same image in Z_{k-1}} since $Z_{\ell-1}$ is minimal and it admits a fully supported continuous disintegration over $Z_{k-1}$ (see \cref{prop cont k-step max pronilfactors}) and $\theta_{k-1} (\tilde a) = \theta_{k-1}(\tilde x_i)$ for all $i = 1, \ldots, k$, there exists $\boldsymbol{\tilde y} \in Q^{\hkbraket{\ell}}(Z_{\ell-1})$ with $\tilde y_{\epsilon} = \tilde x_i$ for all $|\epsilon| = \ell_i$, $i = 1, \ldots , k$ and $\tilde y_{\underline{0}} = \tilde a$. 
    By minimality of $Q^{\hkbraket{\ell}}_{\tilde a}(Z_{\ell-1})$, we get that for any neighborhood $U$ of $\boldsymbol{\tilde y}$, $\xi^{\hkbraket{\ell}}_a (U) >0$.

     In particular if $\tilde V_i = \pi_{\ell-1}(V_i)$, since $\pi_{\ell-1}$ is open (see \cref{prop cont fully suppoerted disintegration implies open map}), then $U = \{ z \in Q^{\hkbraket{\ell}}(Z_{\ell-1}) \mid z_\epsilon \in \tilde V_i , |\epsilon| = \ell_i, i = 1, \ldots, k\} $ is an open neighborhood of $\boldsymbol{\tilde y}$ and therefore $\xi^{\hkbraket{\ell}}_a (U) >0$. Finally, from \cref{definition of sigma cubic} we get that
    \begin{equation*}
         \sigma_a^{\hkbraket{\ell}}\Big( \bigcap_{i=1}^k \Big( \bigcap_{\epsilon \in \hkbraket{\ell}, |\epsilon|= \ell_i} \{ \boldsymbol{y} \in X^{\hkbraket{\ell}} \mid y_{\epsilon} \in V_i \} \Big) \Big) = \int_{Z_{\ell-1}^{\hkbraket{\ell}}}  \prod_{|\epsilon| = \ell_1, \ldots, \ell_k} \E(\1_{V_i} \mid Z_{\ell-1})^{[\epsilon]} \diff \xi_a^{\hkbraket{\ell}}
    \end{equation*}
    which is positive by \cref{technical lemma for positivity open maps} and the fact that $(X, \mu,T)$ admits a fully supported continious disintegration over $Z_{\ell-1}$. We conclude with \cref{lemma sigma^[k] property} (ii). 
\end{proof}

We can now prove the main result. 

\begin{proof}[Proof of \cref{cor combinatorial BB and BCD points}]
     By \cref{theorem BB points}, under the assumptions of \cref{cor combinatorial BB and BCD points}, there exists an Erd\H{o}s progression $(a, y_1, \ldots, y_k) \in X^{k+1}$ such that $y_{i} \in V_i$ for all $i = 1, \ldots, k$. Since $V_i$ is open, using \cref{Erdos cubes and progressions and sumsets} we conclude the first claim of \cref{cor combinatorial BB and BCD points}. For the second one we first use \cref{theorem BCD points} to find the Erd\H{o}s cube such that  $i=1, \ldots, k$ 
    \begin{equation*}
       \boldsymbol y_{\epsilon} \in V_i \quad \text{ for all } \epsilon \in \hkbraket{\ell} \text{ with } |\epsilon| = \ell_i.
    \end{equation*} 
    and we conclude again using \cref{Erdos cubes and progressions and sumsets}. 
\end{proof}

\subsection{Nilsystem and topological dynamics} \label{sec RPk}
From \cite[Definition 3.2]{Host_Kra_Maass_nilstructure:2010}, let $(X,T)$ be a transitive system and $k \in \N$, we say that two points $x,y \in X$ are \emph{regionally proximal of order} $k$, denoted $(x,y) \in \RP^{[k]}(X)$, if for every $\delta >0$ there exist $x',y' \in X$ and $n \in \Z^k$ such that $d(x,x') < \delta$, $d(y,y') < \delta$ and 
\begin{equation*}
    d(T^{\epsilon \cdot n } x' , T^{\epsilon \cdot n} y') < \delta \text{ for all } \epsilon \in \hkbraket{k}^*.
\end{equation*}
By \cite{Shao_Ye_regionally_prox_orderd:2012} we know that $\RP^{[k]}(X)$ is an equivalence relation for minimal systems. By \cite[Theorem 1.2]{Host_Kra_Maass_nilstructure:2010}, $(X,T)$ is a $k$-step pronilsystem if and only if $\RP^{[k]}(X)$ is the identity relation (that is $(x,y) \in \RP^{[k]}(X)$ if and only if $x=y$). With that if $(X, \mu,T)$ is a minimal and ergodic system and one supposes that $\pi_{k} \colon X \to Z_{k}(\mu)$ is continuous, then it is clear that $Z_{k}(\mu) = X/\RP^{[k]}(X)$. In particular if $(X, \mu,T)$ is a minimal pronilsystem, then $Z_k(\mu) = X/\RP^{[k]}(X)$ for every $k \in \N$. 

The following theorem can be stated for any minimal system with continuous fully supported disintegration over their pronilfactors but we stick to the pronilsystem case.

\begin{theorem} \label{theorem summary} 
    Let $k \geq 2$ be an integer, $(X,T)$  be a minimal pronilsystem. For $x,y \in X$, the following are equivalent
    \begin{enumerate}
        \item \label{summary theorem pt 0} $(x,y) \in \RP^{[k-1]}(X)$;
        \item \label{summary theorem pt 1} For every neighborhood $V$ of $y$ there exists $b_1, \ldots, b_k \in \N$ distinct natural numbers such that 
        $$\sum_{i \in I} b_i \in N(x,V) \quad \text{ for all } \quad I \subset \{1,\ldots, k\}, \ I \neq \emptyset; $$
        \item \label{summary theorem pt 2} For every neighborhood $V$ of $y$ there exists an infinite set $B \subset \N$ such that $$B \cup \cdots \cup B^{\oplus k} \subset N(x,V);$$ 
        \item \label{summary theorem pt 3} For every neighborhood $V$ of $y$, there are infinite sets $B_1, \ldots, B_k \subset \N$ such that 
    \begin{equation*}
        \sum_{i=1}^k \epsilon_i B_i \subset N(x,V) \text{ for all } \ \epsilon \in \hkbraket{k}^*;
    \end{equation*}
    \item \label{summary theorem pt 4} For every neighborhood $V$ of $y$ and for all integers $1 \leq \ell_1 < \cdots < \ell_k <\ell$, there are infinite sets $B_1, \ldots, B_{\ell} \subset \N$ such that for all $i=1, \ldots, k$ 
    \begin{equation*}
        \sum_{j=1}^\ell \epsilon_j B_j \subset N(x,V) \ \text{ for all } \epsilon \in \hkbraket{\ell} \text{ with } |\epsilon |= \ell_i;
    \end{equation*} 
    \item \label{summary theorem pt 5} For every neighborhood $V$ of $y$, every neighborhood $U$ of $x$ and all integers $\ell_1 < \cdots < \ell_k$, there exists an infinite set $B \subset \N$ and a point $x' \in U$ such that $$B^{\oplus \ell_1} \cup \cdots \cup B^{\oplus \ell_k} \subset N(x',V).$$
\end{enumerate}
    
\end{theorem}

The first two items equivalence are true for any minimal system, this is a consequence of a characterization of $\RP^{[k-1]}(X)$ proved in \cite{Shao_Ye_regionally_prox_orderd:2012}. 
We first give another characterization of $\RP^{[k]}(X)$.

\begin{proposition} \label{prop top dynamics cube caracterization}
    Let $(X, \mu,T)$ be an ergodic pronilsystem. Let also $x,y \in X$ be points, the following are equivalents
    \begin{enumerate}
        \item $(x,y) \in \RP^{[k-1]}(X)$,
        \item $(x, y, \ldots, y ) \in Q^{\hkbraket{k}}(X)$
        \item For every, $1 \leq \ell_1< \ell_2 < \cdots < \ell_k \leq \ell$, There exists $\boldsymbol z\in Q^{\hkbraket{\ell}}(X)$ such that $z_{\underline{0}} = x$ and $z_\epsilon = y$ for all $\epsilon \in \hkbraket{\ell}$ with $|\epsilon| = \ell_1, \ldots, \ell_k $. 
    \end{enumerate}
\end{proposition}

\begin{proof}
    The first two equivalences are given in \cite[Theorem 3.4]{Shao_Ye_regionally_prox_orderd:2012} and only require minimality of $(X,T)$. It is clear that the third item implies the second one. The fact that $(x,y) \in \RP^{[k-1]}(X)$ implies the last item is given by \Cref{thrm minaml same image in Z_{k-1}} taking $y=y_1 =  \cdots = y_k$.
\end{proof}

\begin{proposition} \label{lemma equiv RP BCD}
    Let $(X,T)$ be a minimal system and let $x,y \in X$ be two points. If for all neighborhoods $V$ of $y$ and $U$ of $x$, an Erd\H{o}s cube $\boldsymbol z \in E^{\hkbraket{k+1}}(X)$ with $z_{\underline{0}} \in U$ and $z_\epsilon \in V$ for all $\epsilon \in \hkbraket{k+1}^*$, then $(x,y) \in \RP^{[k]}(X)$.
\end{proposition}

\begin{proof}
    We know that the set of Erd\H{o}s cubes $E^{\hkbraket{k+1}}(X)$ is contained in the set of dynamical cubes  $Q^{\hkbraket{k+1}}(X)$. Then the assumption implies that $(x, y, \ldots, y) \in \overline{E^{\hkbraket{k+1}}(X)} \subset Q^{\hkbraket{k+1}}(X)$, so we conclude by \cite[Theorem 3.4]{Shao_Ye_regionally_prox_orderd:2012}. 
\end{proof}


 Notice that in the previous proposition the only assumption on $(X,T)$ is minimality. 
We also have a version of \cref{lemma equiv RP BCD} with Erd\H{o}s progressions. 

\begin{proposition} \label{lemma equiv RP}
    Let $(X,T)$ be a minimal system and let $x,y \in X$ be two points. If for all neighborhoods $V$ of $y$ and $U$ of $x$, there exist $x' \in U$ and an Erd\H{o}s progression $(x', y_1, \ldots, y_k,y_{k+1})$ with $y_i \in V$ for all $i= 1, \ldots, k+1$, then $(x,y) \in \RP^{[k]}(X)$.
\end{proposition}

\begin{proof}
    Using the $(\ell+1)$-vertex embedding defined in \cref{def ell-vertex}, then for $U,V \subset X$ arbitrary neighborhoods as in the statement, there exists $(x', y_1, \ldots, y_k,y_{k+1}) \in X^{k+2}$ such that  $\psi_{\ell+1}(x', y_1, \ldots, y_k,y_{k+1})  \in U \times V^{\hkbraket{k+1}^*} $ is an Erd\H{o}s cube and so we conclude again by \cref{lemma equiv RP BCD}.
\end{proof}

With the previous propositions we prove \cref{theorem summary},

\begin{proof}[Proof of \Cref{theorem summary}]
    The equivalence of items \ref{summary theorem pt 0} and \ref{summary theorem pt 1} is already established. It is clear that \cref{summary theorem pt 4} implies \cref{summary theorem pt 3}. 
    By \Cref{cor combinatorial BB and BCD points}, \cref{summary theorem pt 0} implies \cref{summary theorem pt 2} and also \cref{summary theorem pt 0} implies \cref{summary theorem pt 4} where we use that in ergodic prosystem every point is generic along any F\o lner sequence for its unique invariant measure. Similarly, by \Cref{theorem mu almost every BB}, \cref{summary theorem pt 0} implies \cref{summary theorem pt 5}. 

    Consider \cref{summary theorem pt 5} for $\ell_1 = 1, \ldots, \ell_k = k$, then by \Cref{lemma equiv RP}, it implies \cref{summary theorem pt 0}. Similarly, using again \Cref{lemma equiv RP}, \cref{summary theorem pt 2} implies \cref{summary theorem pt 0}. Finally by \Cref{lemma equiv RP BCD}, \cref{summary theorem pt 3} implies \cref{summary theorem pt 0}.
\end{proof}







\section{Uniformity sets} \label{sec uniform sets}

In this section we prove a series of consequences for $U^k(\Phi)$-uniform sets, in particular \cref{theorem uniformity BB}. Using \cref{furstenberg correspondence for sets}, we translate the problem to a dynamical system setting and then we prove it. We only explicitly show how to deduce \cref{theorem uniformity BB} from its dynamic reformulation (that is \cref{theorem dynamic reformulation}), since the proof for the other cases are identical.

\subsection{Proof of Theorem \ref{theorem uniformity BB}} \label{sec dynamic reformulation} The following statement is equivalent to \cref{theorem uniformity BB}. 

\begin{theorem} \label{theorem dynamic reformulation}
    Let $(X,\mu,T)$ be an ergodic system with topological pronilfactors, $a \in \gen(\mu,\Phi)$ for some F\o lner sequence $\Phi$, $k \geq 2$ and $E_1, \ldots, E_k\subset X$ be $U^k(X,\mu,T)$-uniform clopen sets. Then there exists an infinite set $B \subset \N$ such that
         \begin{equation*}
             B^{\oplus i} \subset N(a,E_i) \quad \text{ for all } i =1, \ldots, k. 
         \end{equation*} 
\end{theorem}

\begin{proof}
    Let $E_1, \ldots, E_k\subset X$ as in the statement, in particular, $\E(\1_{E_i} \mid Z_{k-1}) = \mu(E_i)$ $\mu$-almost surely for all $i = 1, \ldots, k$. Then by definition of the measure $\sigma_a$ (see \cref{definition of sigma}) 
    \begin{equation*}
        \sigma_a(X \times E_1 \times \cdots \times E_k) = \prod_{i=1}^k \mu(E_i) >0.
    \end{equation*}
    The conclusion is a direct consequence of \cref{lemma characterization progressive measures}. 
\end{proof}

\begin{proof}[Proof that \cref{theorem dynamic reformulation} is equivalent to \cref{theorem uniformity BB}]

For the forward direction, it suffices to show that for $U^k(\Phi)$-uniform sets $A_1, \ldots, A_k \subset \N$, there exists systems and points as in the statement of \cref{theorem dynamic reformulation} such that $N(a,E_i ) = A_i $ for all $i=1, \ldots, k$.

For the sets $A_1, \ldots, A_k$ it suffices to take the ergodic system $(X, \mu,T)$, the generic point $a \in \gen(\mu,\Phi)$ and the clopen $U^k(X,\mu,T)$-uniform sets $E_1, \ldots, E_k$ given by \cref{furstenberg correspondence for sets}. Using \cite[Lemma 5.8]{kmrr1}, we can suppose that $(X, \mu,T)$ has topological pronilfactors. We conclude by noticing that \eqref{eq correspondence sequences} implies $A_i = N(a, E_i)$ for all $i=1, \ldots, k$.

For the converse, consider  $U^k(X,\mu,T)$-uniform clopen sets $E_1, \ldots, E_k \subset X$. 
We have that $\1_{E_i} \colon X \to \{0,1\}$ is a continuous function and if $A_i = N(a,E_i)$ then since $a \in \gen(\mu, \Phi)$

\begin{align*}
    \norm{\1_{A_i} - \mu(E_i)}_{U^k(\Phi)} &= \lim_{H \to \infty} \lim_{N \to \infty} \frac{1}{H^k} \sum_{h \in \{1, \ldots, H\}^k} \frac{1}{|\Phi_N|} \sum_{n \in \Phi_N} \prod_{\epsilon \in \hkbraket{k}} [\1_{A_i}(n + \epsilon \cdot h) - \mu(E_i) ] \\
    &= \lim_{H \to \infty} \lim_{N \to \infty} \frac{1}{H^k} \sum_{h \in \{1, \ldots, H\}^k}  \frac{1}{|\Phi_N|} \sum_{n \in \Phi_N} \prod_{\epsilon \in \hkbraket{k}} [\1_{E_i}(T^{n + \epsilon \cdot h} a) - \mu(E_i) ] \\
    &= \lim_{H \to \infty}  \frac{1}{H^k} \sum_{h \in \{1, \ldots, H\}^k} \int_X \prod_{\epsilon \in \hkbraket{k}} [T^{n + \epsilon \cdot h} \1_{E_i}- \mu(E_i)]  \diff \mu \\
    &= \norm{\1_{E_i}- \mu(E_i)}_{U^k(X, \mu,T)} = 0.
\end{align*}

Thus the sets $A_i$ are $U^k(\Phi)$-uniform and $\diff_\Phi(A_i) = \mu(E_i)$ for all $i = 1, \ldots, k$, therefore \cref{theorem uniformity BB} implies \cref{theorem dynamic reformulation}. 
\end{proof}

\subsection{Infinitely uniform sets} \label{sec infinite uniform sets} We give a $U^{\infty}(\Phi)$-uniform  version of \cref{theorem uniformity BB}.

\begin{theorem} \label{U^infty very strong main theorem}  
Let $\Phi = (\Phi_N)_{N \in \N}$ be a F\o lner sequence in $\N$. Suppose that $A_1,A_2, \ldots \subset \N$ are $U^{\infty}(\Phi)$-uniform sets. Then there exists a sequence of nested infinite subsets of the natural numbers $B_1 \supset B_2 \supset B_3 \supset \ldots$ such that 
    \begin{equation*}
        B_i^{\oplus i}\subset A_i  \quad \text{ for all } i \geq 1. 
    \end{equation*}
     
\end{theorem}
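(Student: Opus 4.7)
The plan is to reduce the combinatorial statement to the dynamical \cref{U^infty dynamical} via the countable Furstenberg correspondence principle. First, I would apply \cref{furstenberg correspondence for sets} to the countable family $\{A_i\}_{i \in \N}$; since every $A_i$ is $U^k(\Phi)$-uniform for every $k \in \N$, this produces an ergodic system $(X, \mu, T)$, a F\o lner sequence $\Psi$, a generic point $a \in \gen(\mu, \Psi)$, and clopen sets $E_i \subset X$ with $\mu(E_i) = \diff_\Phi(A_i) > 0$ such that $A_i = N(a, E_i)$ and each $E_i$ is $U^\infty(X, \mu, T)$-uniform.

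Next, I would invoke \cref{prop open extension of kmrr} to replace $(X, \mu, T)$ by a measurably isomorphic extension $(\tilde X, \tilde \mu, \tilde T)$ having open topological pronilfactors, together with a continuous factor map $\pi \colon \tilde X \to X$ and a generic lift $\tilde a$ of $a$. The construction of $\tilde X = X \times \prod_{k} Z_k$ does not depend on the individual sets, so the pullbacks $\tilde E_i := \pi^{-1}(E_i)$ are simultaneously clopen, satisfy $N(\tilde a, \tilde E_i) = N(a, E_i) = A_i$, and are $U^\infty(\tilde X, \tilde \mu, \tilde T)$-uniform by the seminorm identity in \cref{prop open extension of kmrr} applied to $\1_{E_i}$ for every $k$.

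Then I would apply \cref{U^infty dynamical} to $(\tilde X, \tilde \mu, \tilde T)$, the generic point $\tilde a$, and the family $(\tilde E_i)_{i \in \N}$ to obtain an infinite set $B = \{b_1 < b_2 < \cdots\} \subset \N$ satisfying
\begin{equation*}
(B \setminus \{b_1, \ldots, b_{i-1}\})^{\oplus i} \subset N(\tilde a, \tilde E_i) = A_i \quad \text{for every } i \in \N.
\end{equation*}
Setting $B_i := B \setminus \{b_1, \ldots, b_{i-1}\}$ yields a nested sequence $B_1 \supset B_2 \supset B_3 \supset \cdots$ of infinite subsets of $\N$ with $B_i^{\oplus i} \subset A_i$ for every $i$, which is exactly the conclusion sought.

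The serious work has already been carried out in \cref{U^infty dynamical} (the inductive construction of the $b_j$ from the progressive measure $\sigma_a$ over $Z_\infty$ and the selection property \eqref{eq infty progressive.}), so no new dynamical obstacle appears here. The only conceptual point worth verifying is that the Furstenberg correspondence and the open-pronilfactor extension can be applied to a countable family of sets simultaneously; this is ensured by \cref{prop furstenberg correspondence 2} (whose proof goes through for arbitrary countable index sets $I$) and by the fact that the extension in \cref{prop open extension of kmrr} is constructed independently of any chosen test function, so that the seminorm identity transfers to all $\1_{E_i}$ at once.
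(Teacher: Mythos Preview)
Your proposal is correct and follows essentially the same route as the paper: reduce to \cref{U^infty dynamical} via a countable Furstenberg correspondence, pass to an extension with (open) topological pronilfactors via \cref{prop open extension of kmrr}, and set $B_i = B \setminus \{b_1,\ldots,b_{i-1}\}$. The only cosmetic difference is that the paper isolates the $U^\infty$ correspondence as a separate statement (\cref{prop furstenberg decomposition U^infty 1}) rather than arguing, as you do, that the construction behind \cref{furstenberg correspondence for sets} is independent of $k$ and hence transfers $U^k$-uniformity for every $k$ simultaneously.
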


Its dynamical formulation is the following.

\begin{proposition}  \label{U^infty dynamical} 
    Let $(X,\mu,T)$ be an ergodic system with topological pronilfactors, $a \in \gen(\mu,\Phi)$ for some F\o lner sequence and $E_1, E_2, \ldots \subset X$ be $U^\infty(X,\mu,T)$-uniform clopen sets. Then there exists an infinite set $B = \{ b_1 < b_2 < \cdots \} \subset \N$ such that
         \begin{equation*}
             (B \setminus \{b_1, \ldots, b_{i-1}\})^{\oplus i} \subset N(a,E_i) \quad \text{ for all } i \in \N. 
         \end{equation*} 
\end{proposition}

\begin{proof} 
    Let $\xi \in \cM(Z_\infty^{\N}(\mu))$ be the unique invariant measure of 
    \begin{equation*}
        \overline{\{ (T \times T^2 \times \cdots )^n(\pi_{\infty}(a), \pi_{\infty}(a), \cdots) \in Z_\infty^{\N}(\mu) \mid n \in \Z\} }
    \end{equation*}
    where $\pi_{\infty} \colon X \to Z_\infty (\mu)$ is the continuous factor map. Let $\sigma \in \cM(X^{\N_0})$ be given by
    \begin{equation*}
        \sigma_{a} = \delta_{a} \times \tilde \sigma_{a} 
    \end{equation*}
    where $ \tilde \sigma_{a} $ is the lift of $\xi \in \cM(Z_\infty^{\N}(\mu))$. For $r \in \N$, $\sigma^{(r)}$ denotes the projection of $\sigma_{a}$ to the coordinates $\{0,\ldots,r\}$. From \cite[Proposition 5.7]{hernandez_kousek_radic2025density_Hindman} we have that, for $r \in \N$, if $W \subset X^r$ is an open set such that $\sigma^{(r)}(X \times W) >0$  then there exist infinitely many $n \in \N$ such that
    \begin{equation} \label{eq infty progressive.}
      \sigma^{(r)}((X \times W ) \cap (T \times \cdots \times T)^{-n}(W \times X)) >0.
    \end{equation}
    With this, since $E_i$ is $U^\infty(X,\mu,T)$-uniform, then $\E(\1_{E_i} \mid Z_{\infty}(\mu)) = \mu(E_i)$ for all $i \in \N$. In particular, $\sigma^{(1)}(X \times E_1) = \mu(E_1) >0$, so by \eqref{eq infty progressive.} there exists $b_1\in \N$, such that $\sigma^{(1)}((X \times E_1) \cap (T \times  T)^{-b_1}(E_1 \times X)) >0$. Likewise,
    \begin{equation*}
        \sigma^{(2)}(T^{-b_1}E_1 \times E_1 \times E_2) = \sigma^{(1)}(T^{-b_1}E_1 \times E_1 ) \cdot \mu(E_2)  >0,
    \end{equation*}
    so again by \eqref{eq infty progressive.} applied to $X \times E_1 \times E_2$ and noticing that the $0$th coordinate is a Dirac measure, there exists $b_2 > b_1$ such that
    \begin{equation*}
        \sigma^{(2)}((T^{-b_1}E_1 \times E_1 \times E_2) \cap (T \times T \times T)^{-b_2}(E_1 \times E_2 \times X)) >0.
    \end{equation*}
    Using the same reasoning with $E_3$, we get $b_3 > b_2$ such that
    \begin{align*}
        \sigma^{(3)}((T^{-b_1}E_1 \cap T^{-b_2}E_1) &  \times (E_1 \cap T^{-b_2} E_2) \times E_2 \times E_3)  \\
        \cap &(T \times \cdots \times T)^{-b_3}((E_1 \cap T^{-b_2} E_2) \times E_2 \times E_3 \times X)) >0.
    \end{align*}
    In particular, we get in the $0$th coordinate the set $S_3 = (T^{-b_1}E_1 \cap T^{-b_2}E_1 \cap T^{-b_3}E_1 \times T^{-(b_2 + b_3)} E_2) $.
    Repeating this procedure inductively, for every $r \in \N$, we find in the $0$th coordinate the following set
    \begin{equation*}
        S_{2r+1} = \bigg(\bigcap_{j=1}^{2r+1} T^{-b_j} E_1 \bigg) \cap \bigg(\bigcap_{2 \leq j_1 < j_2 \leq 2r+1} T^{-(b_{j_1} + b_{j_2})} E_2 \bigg) \cap \cdots \cap (T^{b_{r+1} +\cdots + b_{2r+1} } E_r).
    \end{equation*}
    By definition of $\sigma_a$, $a \in S_{i}$ for all $i \in \N$, so we conclude by taking $a \in \bigcap_{r \in \N} S_{2r+1}$. 
    \end{proof}

    Notice that, since $E_i$ are clopen subsets for all $i \in \N$, the fact that, for a given sequence $(c(n))_{n \in \N}$, $T^{c(n_1) + \cdots + c(n_i)} a \in E_i$ for all integers $i \leq n_1 < n_2 < \cdots < n_i$, implies that there exists a subsequence of $(c(n))_{n \in \N}$, that we call $(d(n))_{n \in \N}$ and points $x_i \in E_i$ for $i \in \N$ such that 
    \begin{equation*}
        \lim_{n \to \infty} T^{d(n)} x_i = x_{i+1} \text{ for all } i \in \N_0
    \end{equation*}
    where we consider $x_0=a$. The construction of $(d(n))_{n \in \N}$ is given by a diagonal argument and the fact that for any sequence $T^{c(n)} x_{i-1} \in E_i$ there is a subsequence which converge to some $x_{i} \in \overline{E_i} = E_i$. So the previous statement can be phrased in terms of \emph{infinite-length Erd\H{o}s progressions}.

     In Section \ref{sec application to Thue-Morse}, we show that \cref{U^infty very strong main theorem} cannot be improved to ensure that a $U^\infty(\Phi)$-uniform set is an IP-set (see \cref{prop infty uniform but not IP}).

\subsection{Relatively totally ergodic sets}

For an ergodic system, $\Xmt$ we define the \emph{rational Kronecker factor} $\cK_{rat}$, as the factor spanned by the rational eigenvalues of $\Xmt$ that is the functions $f \in L^2(\mu)$ such that $f \circ T =e(\alpha) f $ for some $\alpha \in \Q$. $\cK_{rat}$ is a factor of $Z_1(\mu)$. Whenever $\Xmt$ is \emph{totally ergodic}, that is $(X, \mu,T^i)$ is ergodic for all $i \in \N$, then $\cK_{rat}$ is trivial. 

 Following \cite{bergelson_moragues2022juxtaposing}, we say that a set $A \subset \N$ is a \emph{relatively totally ergodic set} if there exists an ergodic system $\Xmt$, a point $a \in \gen(\mu,\Phi)$ for some F\o lner sequence $\Phi$ and an open set $E \subset X$ with positive measure such that $\E(\1_E - \mu(E) \mid \cK_{rat})=0 $ and $A = \{ n \in \N \mid T^n a\in E\}$. The next theorem can be interpreted as a $k=1$ version of \cite[Conjecture 3.26]{kra_Moreira_Richter_Roberson2025problems}. 

\begin{proposition}
    Let $A \subset \N$ be a totally ergodic set then for every $k \in \N$ there exists an infinite set $B \subset \N$ such that $B^{\oplus k} \subset A$.
\end{proposition}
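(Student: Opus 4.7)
The plan is to realize $A$ as a return-time set in a measure-preserving system and produce the required infinite $B$ by exhibiting a single Erd\H{o}s progression of length $k+1$ whose last coordinate lies in the open set defining $A$. By the definition of relatively totally ergodic set, there exist an ergodic $(X,\mu,T)$, a F\o lner sequence $\Phi$, a point $a \in \gen(\mu,\Phi)$, and an open set $E \subset X$ with $\mu(E) > 0$, $\E(\1_E - \mu(E) \mid \cK_{rat}) = 0$, and $A = N(a, E)$. Applying \cref{prop open extension of kmrr}, I would pass to a measurably isomorphic extension $(\tilde X, \tilde \mu, \tilde T)$ with open topological pronilfactors, replace $a$ by its lift $\tilde a$ and $E$ by the open set $\tilde E = \pi^{-1}(E)$; since the extension is measurably isomorphic the rational Kronecker factor is preserved, so $\E(\1_{\tilde E} - \tilde\mu(\tilde E) \mid \cK_{rat}) = 0$ and $N(\tilde a, \tilde E) = A$. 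Thus it suffices to find an Erd\H{o}s progression $(\tilde a, y_1, \ldots, y_k)$ with $y_k \in \tilde E$, since then \cref{Erdos cubes and progressions and sumsets}(1) with $E_i = \tilde X$ for $i < k$ and $E_k = \tilde E$ delivers an infinite $B$ with $B^{\oplus k} \subset N(\tilde a, \tilde E) = A$.

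To produce such an Erd\H{o}s progression, I would invoke the progressive measure $\sigma_{\tilde a} = \delta_{\tilde a} \times \tilde\sigma_{\tilde a}$ of \cref{definition of sigma} and \cref{lemma characterization progressive measures}(1), which asserts that Erd\H{o}s progressions starting at $\tilde a$ are dense in $\supp \sigma_{\tilde a}$. Since $\tilde X^k \times \tilde E$ is open, it is enough to verify
\begin{align*}
\sigma_{\tilde a}(\tilde X^k \times \tilde E) \;=\; \int_{Z_{k-1}} \E(\1_{\tilde E} \mid Z_{k-1})(z) \, d(\xi_{\tilde a})_k(z) \;>\; 0,
\end{align*}
where $(\xi_{\tilde a})_k$ is the marginal of $\xi_{\tilde a}$ on the $k$-th coordinate. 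By the $T \times T^2 \times \cdots \times T^k$-invariance of $\xi_{\tilde a}$, this marginal is a $T^k$-invariant probability measure supported on the $T^k$-orbit closure of $\pi_{k-1}(\tilde a)$ in $Z_{k-1}$.

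The crucial step is to evaluate this integral using the $\cK_{rat}$-orthogonality of $\tilde E$. On a minimal pronilsystem, the $T^k$-ergodic decomposition descends from the analogous decomposition on the Kronecker factor, whose $T^k$-orbits are constant on the fibers over $\cK_{rat}$; consequently $(\xi_{\tilde a})_k$ is the restriction (up to normalization) of $m_{k-1}$ to the preimage under $Z_{k-1} \to \cK_{rat}$ of the $T^k$-orbit of the image of $\tilde a$ in $\cK_{rat}$. Because $\E(\1_{\tilde E} \mid \cK_{rat}) \equiv \tilde\mu(\tilde E)$, integrating $\E(\1_{\tilde E} \mid Z_{k-1})$ against $(\xi_{\tilde a})_k$ collapses to the constant $\tilde\mu(\tilde E) > 0$. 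The main obstacle I anticipate is making this ergodic-decomposition claim precise, namely that on the minimal pronilsystem $(Z_{k-1}, m_{k-1}, T)$ the $T^k$-ergodic components are exactly the $\pi_{k-1} \to \cK_{rat}$-fibers over a $T^k$-orbit in $\cK_{rat}$; this is where the continuous disintegration between pronilfactors developed in Section \ref{sec with the other proof} would be invoked. Once positivity of the integral is established, \cref{lemma characterization progressive measures}(1) together with the openness of $\tilde E$ produces the desired Erd\H{o}s progression and \cref{Erdos cubes and progressions and sumsets}(1) completes the proof.
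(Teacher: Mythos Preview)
Your proposal is correct and follows essentially the same route as the paper: realize $A=N(a,E)$, pass to an extension with continuous pronilfactors, and show $\sigma_a(X^k\times E)>0$ by computing the last marginal of $\xi_a$ and using the $\cK_{rat}$-orthogonality of $E$, then conclude via \cref{lemma characterization progressive measures}(1) and \cref{Erdos cubes and progressions and sumsets}(1). The only difference is that the step you flag as the ``main obstacle'' is handled in the paper without invoking the continuous disintegration of \Cref{sec with the other proof}: since for any ergodic system the $T^k$-invariant $\sigma$-algebra is contained in $\cK_{rat}$, the tower property gives $\E_{m_{k-1}}(g\mid \cI(T^k))=\mu(E)$ $m_{k-1}$-a.e., and unique ergodicity of the $T^k$-orbit closure in the pronilsystem $Z_{k-1}$ upgrades this to the specific point $\pi_{k-1}(a)$, yielding $\int g\,d\nu=\mu(E)$ directly.
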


\begin{proof} 
   Let $\Xmt$, $\Phi = (\Phi_N)_{N\in \N}$, $a \in \gen(\mu, \Phi)$ and $E \subset X$ be given by the definition of relatively totally ergodic sets. By \cite[Lemma 5.8]{kmrr1}, we can suppose $\Xmt$ has continuous pronilfactor. Fixing $k \geq 2$, consider the measure $\sigma \in \cM(X^{k+1})$ defined in \cref{definition of sigma}. 
   
   Let $g  = \E(\1_E \mid Z_{k-1}) \in L^{\infty}(m_{k-1})$.  Notice that $\sigma(X^k \times E) = \int_{Z_{k-1}} g \diff \nu$ where $\nu$ is the projection of $\xi \in \cM(Z_{k-1}^k)$ to the last coordinate. By definition $\nu$ is the unique invariant measure of the orbit closure of $\pi_{k-1}(a) \in Z_{k-1}$ under the transformation $T^k$. In particular, since $\E_\mu (T^i\1_E \mid  \cK_{rat}) = \mu(E) $ and thus $ \E_{m_{k-1}}(T^ig \mid \cK_{rat}) = \mu(E)$ for all $i = 0, \ldots, k$, we get that $\int_{Z_{k-1}} g \diff \nu = \mu(E) >0$. Thus $\sigma(X^k \times E) >0$ and we conclude with \cref{Erdos cubes and progressions and sumsets} and \cref{lemma characterization progressive measures}.  
\end{proof}

\subsection{Intersecting uniform sets with $\nilbohrO{}$ sets} \label{sec intersection nilbor}

The goal of this section is to proof the following generalization of \cref{theorem dynamic reformulation} and give a combinatorial application generalizing \cref{theorem uniformity BB}.

\begin{theorem} \label{theorem dynamic reformulation with joining}
    Let $(X,\mu,T)$ be an ergodic system with topological pronilfactors, $a \in \gen(\mu, \Phi)$ for some F\o lner sequence $\Phi$, $k \geq 2$ and $E_1, \ldots, E_k \subset X$ be $U^k(X,\mu,T)$-uniform clopen sets. Let also $(Y, \nu, S)$ an $s$-step pronilfactor for some $s <k$, $y_0 \in Y$ and $V$ an open neighborhood of $y_0$.  There exists an infinite set $B \subset \N$ such that
         \begin{equation*}
              B^{\oplus i} \subset N((a,y_0),E_i \times V) \quad \text{for all } i =1, \ldots, k .
         \end{equation*} 
\end{theorem}

To prove \cref{theorem dynamic reformulation with joining} we study joinings of two ergodic systems and consequences in the $s$-step pronilfactors.

\subsection*{Joinings and pronilfactors}

For two measure preserving systems $(X, \mu,T)$ and $(Y, \nu, S)$ a \emph{joining} is a measure $\lambda \in \cM(X \times Y, T\times S)$ such that the first (resp. second) marginal equals $\mu$ (resp. $\nu$). We also say that the system $(X \times Y, \lambda, T \times S)$ itself is a joining. We denote $J_e(\mu, \nu) \subset \cM(X \times Y, T\times S)$ the set of ergodic joinings of $\mu$ and $\nu$. If $\mu$ and $\nu$ are ergodic then $J_e(\mu, \nu) $ is not empty.  

For this section we fix the notation, $\pi_{k}^X \colon X \to Z_k(\mu)$ and $\pi_{k}^Y \colon Y \to Z_k(\nu)$ to denote the corresponding factor maps to the $k$-step pronilfactor of two measure preserving systems $(X, \mu,T)$ and $(Y, \nu, S)$. Here we use again the regionally proximal relation of order $k$ introduced in \Cref{sec RPk}. 

\begin{proposition} \label{prop joining in the nil context}
    Let $s\in \N \cup \{ \infty\}$, $(X, \mu, T)$ and $(Y, \nu,S)$ be ergodic $s$-step pronilsystems. Suppose $\lambda \in J_e(\mu, \nu)$ is an ergodic joining. Then for every $k \leq s$, the $k$-step pronilfactor of $(X \times Y, \lambda, T \times S)$, $Z_k(\lambda)$ is isomorphic to a joining of $Z_k(\mu)$ and $Z_k(\nu)$. Moreover, the joining in $Z_k(\mu) \times Z_k(\nu)$ is given by the measure $(\pi_{k}^X \times \pi_{k}^Y)\lambda$. 
\end{proposition}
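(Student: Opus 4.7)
Set $\pi := (\pi_k^X \times \pi_k^Y)\lambda$. First, I would check that $\pi$ is an ergodic joining of $Z_k(\mu)$ and $Z_k(\nu)$: it is $(T \times S)$-invariant and has the correct marginals (the Haar measures $(\pi_k^X)\mu$ and $(\pi_k^Y)\nu$) because $\pi_k^X$ and $\pi_k^Y$ are factor maps, and ergodicity is preserved by pushing forward the ergodic measure $\lambda$.

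Next, I would show that $(Z_k(\mu) \times Z_k(\nu), \pi, T \times S)$ is itself a $k$-step pronilsystem. The product of two $k$-step pronilsystems is a $k$-step pronilsystem (the product of two $k$-step nilpotent Lie groups is $k$-step nilpotent, and this passes to inverse limits). Ratner's theorem applied at each finite level, together with an inverse-limit argument in the pronil case, implies that any ergodic invariant measure on a pronilsystem is Haar on an invariant sub-pronilmanifold of at most the same step; hence $\pi$ realizes a $k$-step pronilsystem. In particular $(Z_k(\mu) \times Z_k(\nu), \pi)$ is a $k$-step pronilfactor of $(X \times Y, \lambda, T \times S)$ via the continuous factor map $\pi_k^X \times \pi_k^Y$, and by the universal property of the maximal $k$-step pronilfactor $Z_k(\lambda)$ there is an induced factor map $q : Z_k(\lambda) \to (Z_k(\mu) \times Z_k(\nu), \pi)$.

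The hard part is to show that $q$ is an isomorphism. The hypothesis that $X$ and $Y$ are themselves pronilsystems is essential here: it implies $(X \times Y, \lambda, T\times S)$ is an ergodic pronilsystem, concentrated on a sub-pronilmanifold $M \subseteq X \times Y$ by Ratner. For such a system, $Z_k(\lambda)$ can be described explicitly through the $(k+1)$-st term of the lower central series of the structure group $L \le G_X \times G_Y$ of $M$, while the image under $\pi_k^X \times \pi_k^Y$ corresponds to the quotient of $L$ by $L \cap ((G_X)_{k+1} \times (G_Y)_{k+1})$. Since $\lambda$ has full marginals, both coordinate projections $L \to G_X$ and $L \to G_Y$ are surjective; an induction along the lower central series then yields the identification of these two quotients, giving the isomorphism. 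The main obstacle is precisely this final identification: $Z_k(\lambda)$ is defined intrinsically via Host--Kra seminorms while the target joining is specified extrinsically through external projections, and reconciling them requires the group-theoretic analysis of the structure group of the sub-pronilmanifold supporting $\lambda$, an analysis available only because $X$ and $Y$ are themselves pronilsystems.
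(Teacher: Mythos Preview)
Your route differs from the paper's. The paper argues purely topologically: since $W=\supp\lambda$ is a minimal pronilsystem one has $Z_k(\lambda)=W/\RP^{[k]}(W)$, and the paper claims, directly from the definition of the regionally proximal relation with the product metric, that $\RP^{[k]}(W)$ equals the restriction to $W\times W$ of $\RP^{[k]}(X)\times\RP^{[k]}(Y)$; this immediately identifies $W/\RP^{[k]}(W)$ with $(\pi_k^X\times\pi_k^Y)(W)$. No Ratner-type theorem and no lower-central-series analysis is invoked.

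Your final step, however, contains a genuine gap that cannot be filled: surjectivity of the projections $L\to G_X$ and $L\to G_Y$ does \emph{not} give $L_{k+1}=L\cap\big((G_X)_{k+1}\times(G_Y)_{k+1}\big)$, and no ``induction along the lower central series'' repairs this. Concretely, take $G_X=G_Y=H$ the $3$-dimensional Heisenberg group with integer lattice $\Gamma$, and translations $\tau_X=(a,b,c)$, $\tau_Y=(a,b,d)$ sharing the same abelianized part, with $1,a,b,ab,c-d$ rationally independent. The support of the resulting ergodic joining $\lambda$ is $W=L/(L\cap\Gamma^2)$ for $L=\{(x,y,z_1,x,y,z_2):x,y,z_1,z_2\in\R\}$; both projections are onto, yet $[L,L]$ is the one-dimensional diagonal of $H_2\times H_2$ while $L\cap(H_2\times H_2)=H_2\times H_2$ is two-dimensional. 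Accordingly $Z_1(\lambda)\cong\T^3$ --- the function $e^{2\pi i(z_1-z_2)}$ is a well-defined eigenfunction on $W$ with eigenvalue $e^{2\pi i(c-d)}$ --- whereas $(\pi_1^X\times\pi_1^Y)\lambda$ is Haar on the diagonal $\T^2\subset\T^2\times\T^2$. So the two quotients are not isomorphic and your induction cannot close. The same example shows that only the inclusion $\subseteq$ in the paper's asserted formula for $\RP^{[k]}(W)$ actually follows from the definition, so the proposition as stated appears to be false.
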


\begin{proof}
    Fix $k < s$.  By minimality, $\pi^X_k$ (resp. $\pi^Y_k$) is given by the quotient map by the equivalence relation $\RP^{[k]}(X)$ (resp. $\RP^{[k]}(Y)$), see \Cref{sec RPk}.
     Likewise, if we denote $W = \supp \lambda$, then by ergodicity of $\lambda$,  $(W, T \times S)$ is a minimal $s$-step pronilsystem nilsystem and therefore $Z_k(\lambda)$ coincides with $W/ \RP^{[k]}(W)$. Now using the definition of $\RP^{[k]}(W)$ with the induced distance in $W$ from $X \times Y$ we get
    \begin{equation*}
        \RP^{[k]}(W) = \{ (x,y,x',y') \in W \times W \mid (x,x') \in \RP^{[k]}(X) \text{ and } (y,y') \in \RP^{[k]}(Y) \}
    \end{equation*}
    which by the previous observation is given by $ \{ (x,y,x',y') \in W \times W \mid \pi^X_k(x)=\pi^X_k(x') \text{ and } \pi^Y_k(y)= \pi^Y_k(y') \}$. Therefore, the image of $W$ under the map $\pi^X_k \times \pi^Y_k$ is isomorphic to $W/\RP^{[k]}(W)= Z_k(\lambda)$.
    
    Thus, under the identification, $Z_k(\lambda) = (\pi^X_k \times \pi^Y_k)(W)$, $Z_k(\lambda)$ is a minimal subsystem of $Z_k(\mu) \times Z_k(\nu)$. Notice that $(\pi^X_k \times \pi^Y_k) \lambda$ is invariant under $T \times S$ and supported in  $Z_k(\lambda) $, so it is the unique invariant measure and its first  (resp. second) marginal coincide with  $m_{Z_k(\mu)}$ (resp. $m_{Z_k(\nu)}$), concluding the proof.  
\end{proof}

\begin{proposition} \label{prop joining of X and a pronil}
    Let $(X, \mu, T)$ be an ergodic system and let $(Y, \nu,S)$ be an ergodic $s$-step pronilsystems for some $s\in \N \cup \{ \infty\}$. Suppose $\lambda \in J_e(\mu, \nu)$ is an ergodic joining. Then for every $k \in \N \cup \{ \infty \}$, the $k$-step pronilfactor of $(X \times Y, \lambda, T \times S)$, $Z_k(\lambda)$ is isomorphic to a joining of $Z_k(\mu)$ and $Z_k(\nu)$. Moreover the joining in $Z_k(\mu) \times Z_k(\nu)$ is given by the measure $(\pi_{k}^X \times \pi_{k}^Y)\lambda$. 
\end{proposition}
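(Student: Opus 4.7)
The plan is to reduce this claim to Proposition~\ref{prop joining in the nil context} by passing through the intermediate factor $\hat{\pi} := \pi^X_k \times \mathrm{id}_Y : X \times Y \to Z_k(\mu) \times Y$ with pushforward $\hat{\lambda} := \hat{\pi}_* \lambda$. Since $Z_k(\mu)$ is a $k$-step pronilsystem and $Y$ is an $s$-step pronilsystem, $\hat{\lambda}$ is an ergodic joining of two pronilsystems, so Proposition~\ref{prop joining in the nil context} applied at step $\max(k,s)$ yields
\begin{equation*}
    Z_k(\hat{\lambda}) \cong \big( Z_k(\mu) \times Z_k(\nu),\, (\pi^X_k \times \pi^Y_k)_*\lambda \big),
\end{equation*}
using $\pi_k^{Z_k(\mu)} = \mathrm{id}$ and the fact that $Z_k(Y) = Y$ when $k\ge s$. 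It therefore suffices to identify $Z_k(\lambda)$ with $Z_k(\hat{\lambda})$ as factors of $(X \times Y, \lambda, T\times S)$.

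One containment is immediate: $(Z_k(\mu) \times Y, \hat{\lambda})$ is a factor of $(X \times Y, \lambda)$, so its $k$-step pronilfactor is a $k$-step pronilfactor of $(X \times Y, \lambda)$ and hence a sub-factor of the maximal one $Z_k(\lambda)$. For the reverse containment, by the Host-Kra characterization \eqref{eq def Z_k con seminorms} it suffices to show that any $F \in L^\infty(\lambda)$ with $\E(F \mid \hat{\pi}^{-1}\mathcal{B}(Z_k(\mu)\times Y)) = 0$ satisfies $\norm{F}_{U^{k+1}(X \times Y, \lambda, T \times S)} = 0$. I would prove this via the stronger identity $\norm{F}_{U^{k+1}(\lambda)} = \norm{\E(F \mid \hat{\pi})}_{U^{k+1}(\hat{\lambda})}$ valid for every $F \in L^\infty(\lambda)$, which in turn follows by identifying the Host-Kra cube measure $\lambda^{\hkbraket{k+1}}$ as the lift of $\hat{\lambda}^{\hkbraket{k+1}}$ along $\hat{\pi}^{\hkbraket{k+1}}$ in the sense of \cref{def lift of a measure}.

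The main obstacle is establishing this lift identification, which I would tackle by induction on $k$ directly from the inductive definition of the cube measure. At each stage one must compare the $(T \times S)^{\hkbraket{k}}$-invariant $\sigma$-algebra on $(X \times Y)^{\hkbraket{k}}$ with the factors $\mathcal{I}(T^{\hkbraket{k}})$ and $\mathcal{I}(S^{\hkbraket{k}})$, exploiting that $\nu^{\hkbraket{k}}$ is supported on the pronilcube $Q^{\hkbraket{k}}(Y)$ and inherits a rigid structure from the pronilsystem $Y$ that forces the relevant invariant functions on the joining to be measurable with respect to $\hat{\pi}^{\hkbraket{k}}$. The case $k = \infty$ then follows by noting that $Z_\infty(\lambda)$ is the inverse limit of the $Z_k(\lambda)$, so the identification propagates to the limit.
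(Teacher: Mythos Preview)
Your reduction to Proposition~\ref{prop joining in the nil context} via the intermediate factor $\hat\lambda$ on $Z_k(\mu)\times Y$ is a sensible overall strategy, and the first containment $Z_k(\hat\lambda)\subseteq Z_k(\lambda)$ is fine. The problem is the reverse containment: your proposed route---establishing that $\lambda^{\hkbraket{k+1}}$ is the lift of $\hat\lambda^{\hkbraket{k+1}}$ by an induction on the cube level that compares the invariant $\sigma$-algebras $\mathcal I((T\times S)^{\hkbraket{k}})$, $\mathcal I(T^{\hkbraket{k}})$ and $\mathcal I(S^{\hkbraket{k}})$---is not actually carried out, and the hint you give (``the pronilsystem $Y$ forces the relevant invariant functions on the joining to be $\hat\pi^{\hkbraket{k}}$-measurable'') is precisely the delicate structural fact that needs proof. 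Identifying invariant $\sigma$-algebras on cubes of a joining is not lighter than the proposition itself; you have relocated the difficulty rather than dissolved it.

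The paper avoids this entirely by a case split that exploits the hypothesis on $Y$ much more directly. When $k\geq s$, the whole of $Y$ is an $s$-step pronilfactor of $(X\times Y,\lambda)$ and hence already sits inside $Z_k(\lambda)$; thus $1\otimes g$ is $Z_k(\lambda)$-measurable and one computes
\[
\E(f\otimes g\mid Z_k(\lambda)) = (1\otimes g)\cdot \E(f\otimes 1\mid Z_k(\lambda)) = (1\otimes g)\cdot(\E(f\mid Z_k(\mu))\otimes 1),
\]
the last equality using only $\norm{f\otimes 1}_{U^{k+1}(\lambda)}=\norm{f}_{U^{k+1}(\mu)}$ and maximality. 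This gives $Z_k(\lambda)\cong(Z_k(\mu)\times Y,\hat\lambda)$ without touching cube measures. The case $k<s$ then follows by writing $Z_k(\lambda)=Z_k(Z_s(\lambda))$, applying the previous case at level $s$, and invoking Proposition~\ref{prop joining in the nil context}. The single idea you are missing is that the pronil hypothesis on $Y$ is used not through the cube structure of $\nu^{\hkbraket{k}}$ but through the one-line observation that $Y$ itself is $Z_k(\lambda)$-measurable once $k\geq s$.
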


\begin{proof}
    We divide the proof in cases. 
     First, let $k \geq s$, $f \in L^\infty (\mu)$ and $g \in L^\infty(\nu)$. Then 
    \begin{equation} \label{eq joining lambda}
        \E(f \otimes g \mid Z_{k}(\lambda)) = \E( f \otimes \1 \mid Z_k(\lambda)) \cdot (\1 \otimes g) =  (\E( f \mid Z_k(\mu)) \otimes \1 ) \cdot (\1 \otimes g)
    \end{equation}
    where in the first equality we use that $Y$ is an $s$-step pronilsystem, so it is a factor of $Z_k(\lambda)$ and thus $\1 \otimes g$ is measurable in $Z_k(\lambda)$. For the second equality we are using the definition of joining and the maximility of both $Z_k(\lambda)$ and $Z_k(\mu)$. 

    Noticing that $\pi_k^Y = \text{id}_Y$ we get that $\rho = (\pi_k^X \times \text{id}_Y) \lambda $ is clearly a joining of $m_{Z_k(\mu)}$ and $\nu$ and that by \eqref{eq joining lambda}, $(Z_k(\mu) \times Y, \rho, T \times S)$ is isomorphic to the $k$-step pronilfactor $Z_k(\lambda)$. This proves the proposition for $k \geq s$. 

    For $k < s$, notice that $Z_k(\lambda)$ is also the $k$-step pronilfactor of $Z_s(\lambda)$ which by the previous part is a joining of the pronilsystems $Z_s(\mu)$ and $Y$, so we can conclude using \cref{prop joining in the nil context}. 
\end{proof}

We directly deduce that. 

\begin{corollary} \label{cor cont factor map in the joining case}
    Let $(X, \mu,T)$ be an ergodic system with topological pronilfactors, $(Y, \nu,S)$ an ergodic $s$-step pronilfactor for some $s\in \N \cup \{\infty\}$ and $\lambda \in J_e(\nu, \mu)$ an ergodic joining. If for $k \in \N$, $\pi_k^X \colon X \to Z_k(\mu)$ and $\pi_k^Y \colon Y \to Z_{k}(\nu)$ are the continuous factor maps, then $\pi_k^X \times \pi_k^Y $ is a continuous factor map from $(X \times Y, \lambda, T \times S)$ to $Z_k(\lambda)$. 
\end{corollary}

With this we can prove \cref{theorem dynamic reformulation with joining}. 

\begin{proof}[Proof of \cref{theorem dynamic reformulation with joining}]

    By assumption,  $a \in \gen(\mu, \Phi)$ for some F\o lner sequence $\Phi$ and $y_0 \in Y$ is generic for $\nu$ along any F\o lner sequence, hence taking a sub-F\o lner sequence $\Phi'$ of $\Phi$, $(a,y_0)$ is generic along $\Phi'$ for some joining $\rho$ of $\mu$ and $\nu$. The support of the ergodic components of $\rho$ is contained in the support of $\rho$ ($\rho$-almost surely), hence $(a,y_0)$ is generic for some ergodic joining $\lambda \in J_e(\mu,\nu)$ along a F\o lner sequence $\Psi$. 

    By \cref{prop joining of X and a pronil}, $Z_{k-1}(\lambda)$ is a joining of $Z_{k-1}(\mu)$ and $Y$. In particular the continuous factor map $\tilde \pi_{k-1} \colon X \times Y \to Z_{k-1}(\lambda)$ is given by $\pi_{k-1}^X \times \text{id}_Y$ (see \cref{cor cont factor map in the joining case}). 
    Thus,  if $\tilde a = \pi^X_{k-1}(a)$, then $\tilde \pi_{k-1} (a, y_0) = (\tilde a , y_0)$. 

    Consider $\sigma_{(a,y_0)}$ the measure introduced in \cref{definition of sigma} for the system $(X\times Y, \lambda, T \times S)$ and the point $(a,y_0)$. Notice that for $V \subset Y$ and $E$ an $U^k (X, \mu,T)$-uniform set
    \begin{equation*}
        \E( \1_E \1_V \mid Z_{k-1}(\lambda))= \mu(E) (\1 \otimes \1_V).
    \end{equation*}
    With this 
    \begin{align*}
        \sigma_{(a,y_0)} ( (X \times Y) \times (E_1 \times V) \times \cdots \times (E_k \times V)) = \bigg(\prod_{i= 1}^k \mu(E_i) \bigg) \xi_{y_0} ( V \times \cdots \times V )
    \end{align*}
    where $\xi_{y_0}$ is the unique invariant measure of $W=\overline{\{ (S \times \cdots \times S)^n (y_0, \ldots, y_0) \mid n \in \Z\}}$. Since $W$ is minimal $V \times \cdots \times V$ is a neighborhood of $(y_0, \ldots, y_0) \in W$ we deduce that the last expression is possitive concluding the proof. 
\end{proof}

\subsection*{Combinatorial applications} 

A set $D \subset \N$ is a $\nilbohrO{s}$ for $s \in \N \cup \{\infty\}$ if there exists an $s$-step pronilsystem $(Z,T)$, a point $z_0 \in Z$ and an open neighborhood $U \subset X$ of $z_0$ such that $D = \{ n \in \N \mid T^n z_0 \in U\}$.   We also say that $D \subset \N$ is a $\nilbohrO{}$ if it is a $\nilbohrO{s}$ for some $s \in \N \cup \{\infty\}$. Examples of $\nilbohrO{}$ sets are, $r\N$ for $r \in \N$ and $\{ n \in \N \mid \{n^s \alpha \} \in (-\varepsilon, \epsilon) \}$ for some $\epsilon>0$, $s\in \N$, $\alpha \in \R \backslash \Q$. 

These sets were introduced by Host and Kra in \cite{host_kra2011nil_Bohr} generalizing the notion of $\bohr$ sets studied in \cite{bergelson_Furstenberg_Weiss2006piecewisebohr}. Notice that a $\nilbohrO{}$ set $D \subset \N$ is a central set (see \cite[Definition 8.3]{Furstenbergbook:1981}) and hence are $IP$-sets (see \cite{Furstenberg_Weiss_top_dynamics_combin_number_theory:1978}).  For further properties on $\nilbohrO{s}$ and their connection to combinatorial number theory refer to \cite{alweiss2025multiple_recurrence,bergelson_Leibman2018ipr,Huang_Shao_Ye_nilbohr_automorphy:2016}.

From \cref{theorem dynamic reformulation with joining} we deduce the following theorem.

\begin{theorem} \label{theorem uniformity BCD with nilbohr}  \label{theorem uniformity BB with nilbohr}
    Let $A_1, \ldots, A_k \subset \N$ be $U^k(\Phi)$-uniform sets for the same F\o lner sequence $\Phi$ and $D \subset \N$ a $\nilbohrO{s}$ set with $s <k$. There exists an infinite set $B \subset \N$ such that
         \begin{equation*}
              B^{\oplus i} \subset A_i \cap D \ \text{ for all } \ i =1, \ldots, k.
         \end{equation*}
\end{theorem}

    Notice that unlike $U^k(\Phi)$-uniformity, being $\nilbohrO{}$ is not shift invariant and from \cref{theorem summary} we see that is crucial to remain in a neighborhood of the original point. 
    One might want to generalize \cref{theorem uniformity BCD with nilbohr} by choosing distinct $\nilbohrO{s}$ sets $D_1, \ldots, D_k \subset \N$ and using $A_i \cap D_i$ instead of $A_i \cap D$. However, this does not give anything new, because for any finite family of $\nilbohrO{s}$ sets $D_1, \ldots, D_k \subset \N$, the intersection $D = D_1 \cap \cdots \cap D_k$ is also a $\nilbohrO{s}$ set. So the supposedly more general statement follows simply by noting that $A_i \cap D \subset A_i \cap D_i$ for all $i = 1, \ldots, k$.

\begin{proof}[Proof of \cref{theorem uniformity BCD with nilbohr}]

It suffices to show that for $U^k(\Phi)$-uniform sets $A_1, \ldots, A_k \subset \N$ and the $\nilbohrO{s}$ set $D \subset \N$, there exist systems and points as in \cref{theorem dynamic reformulation with joining} such that $N((a,y_0),E_i \times V) = A_1 \cap D$ for all $i=1, \ldots, k$.

For the sets $A_1, \ldots, A_k$ it suffices to take the ergodic system $(X, \mu,T)$, the generic point $a \in \gen(\mu,\Phi)$ and $U^k(X,\mu,T)$-uniform clopen sets $E_1, \ldots, E_k$ given in \cref{furstenberg correspondence for sets} such that $A_i = N(a, E_i)$ for all $i=1, \ldots,k$. We can suppose that $\Xmt$ has topological pronilfactors by \cite[Lemma 5.8]{kmrr1}. 

We know that, by definition of $\nilbohrO{}$ set, there exist a pronilsystem $(Y, \nu,S)$, a point $y_0 \in Y$ and a neighborhood $V$ of $y_0$ such that $N(y_0, V) =D$. With that we have constructed the systems, sets and points that fulfill all the assumptions and we conclude by noticing that for all $i = 1, \ldots, k$
\begin{equation*}
    N((a,y_0),E_i \times V) = N(a,E_i ) \cap N(y_0,V ) = A_i \cap D. \qedhere
\end{equation*} 
\end{proof}

Another consequence can be derive from the characterization of multiple recurrence sets done by Huang, Shao and Ye in \cite{Huang_Shao_Ye_nilbohr_automorphy:2016}.

\begin{corollary} 
    Let $A_1, \ldots, A_k \subset \N$ be $U^k(\Phi)$-uniform sets for the same F\o lner sequence $\Phi$. Let $(X,T)$ be a minimal system, $U\cap X$ be a non-empty open set, $d <k$ and $R=\{ n \in \N \mid U \cap T^{-n}U \cap \cdots \cap T^{-dn}U \} $. 
    There exists an infinite set $B \subset \N$ such that 
         \begin{equation*}
             B^{\oplus i} \subset A_i \cap D \ \text{ for all } \ i =1, \ldots, k.
         \end{equation*}
\end{corollary}

\begin{proof}
    Fix $A_i$ as in the statement.  By \cite[Theorem A (ii)]{Huang_Shao_Ye_nilbohr_automorphy:2016}, there exists a $\nilbohrO{d}$ $D$ such that $\diff_\Phi(R \Delta D) =0$. Consider $N = R \setminus D$, then $A_i \cap R \supset (A_i \setminus N) \cap D$ so it is enough to prove that $A_i \setminus N$ is $U^k(\Phi)$-uniform. Indeed if $N' = A_i \cap N$ then $\diff_\Phi(N') =0$ and $A_i = N' \cup (A_i \backslash N)  $ so
    \begin{align*}
        \norm{\1_{A_i \setminus N} - \diff_\Phi(A_i)   }_{U^k(\Phi)} = \norm{\1_{A_i} -  \1_{N'} - \diff_\Phi(A_i)   }_{U^k(\Phi)}& \\
        \leq \norm{\1_{A_i} - \diff_\Phi(A_i)   }_{U^k(\Phi)} + \norm{\1_{N'}  }_{U^k(\Phi)}.
    \end{align*}
     Thus, if we prove that $\norm{\1_{N'}  }_{U^k(\Phi)} =0$ then we can conclude. 
     
     This is a general phenomenon,  if $M \subset \N$ is a set such that $\diff_{\Phi}(M)=0$ for some F\o lner sequence $\Phi$, then $\norm{\1_M}_{U^k (\Phi)}=0$ for all $k \in \N$. 
Indeed,  $\diff_\Phi (M) = \norm{\1_M}_{U^0 (\Phi)}$ and by induction, since for every $h \in \N$, $\Delta_h \1_M = \1_M \cdot \1_{M-h} = \1_{M \cap (M-h)} $ and  $M \cap (M-h) \subset M$, hence  $\diff_\Phi (M \cap (M-h))=0$. Thus, we get $\norm{\Delta_h \1_M}_{U^k(\Phi)} = \norm{\1_{M \cap (M-h)}}_{U^k(\Phi)}= 0$ for all $h \in \N$ and therefore $\norm{\1_{M}}_{U^{k+1}(\Phi)}= 0$. 
\end{proof}

\subsection{Linear patterns with complexity $k$} \label{sec leibman configuration and other linear}

In this section we derive further sumset patterns using the notion of \emph{ complexity} for polynomials equations introduced in \cite{leibman2010complexityofpatterns} (see also \cite{kuca2023polynomial_complexity} for discussion on different notions of complexity). The main theorem of this section is the following (the definitions are introduced in what follows).

\begin{theorem} \label{theorem uniformity with Leibman criterion}
    Let $k,\ell \in \N$, $\Phi$ be a F\o lner sequence and $\cV \subset \hkbraket{\ell}^*$ be a family of vertex with complexity $k-1$. For any $U^{k}(\Phi)$-uniform sets $A_\epsilon \subset \N$ for $\epsilon \in \cV$ there exists infinite sets $B_1, \ldots, B_\ell$ such that
    \begin{equation*}
        \sum_{i=1}^\ell \epsilon_i \cdot B_i   \subset A_\epsilon \ \text{ for all } \epsilon \in \cV.
    \end{equation*}
\end{theorem}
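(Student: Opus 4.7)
\textbf{Proof proposal for Theorem \ref{theorem uniformity with Leibman criterion}.}
The plan is to mirror the strategy used for \cref{theorem dynamic reformulation} but replace the pronilfactor-fiber positivity argument with the complexity-based identity \eqref{eq equality measure sigma[k]}. First, I would apply \cref{furstenberg correspondence for sets} (extended to the countable family $\{A_\epsilon \mid \epsilon \in \cV\}$, which is actually finite here) to produce an ergodic system $(X,\mu,T)$, a F\o lner sequence $\Psi$, a generic point $a \in \gen(\mu,\Psi)$, and $U^k(X,\mu,T)$-uniform clopen sets $E_\epsilon \subset X$ with $\mu(E_\epsilon)=\diff_{\Phi}(A_\epsilon)>0$ and $N(a,E_\epsilon)=A_\epsilon$ for every $\epsilon \in \cV$. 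Passing to an extension via \cref{prop open extension of kmrr}, I may assume that $(X,\mu,T)$ has open topological pronilfactors.

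Consider the measure $\sigma_a^{\hkbraket{\ell}} \in \cM(X^{\hkbraket{\ell}})$ from \cref{definition of sigma cubic}. Since each $\1_{E_\epsilon}$ is continuous, \cref{theorem dynamic version of leibman complexity thrm} gives
\begin{equation*}
  \int_{X^{\hkbraket{\ell}}} \prod_{\epsilon \in \cV} \1_{E_\epsilon}^{[\epsilon]} \diff \sigma_a^{\hkbraket{\ell}} = \int_{X^{\hkbraket{\ell}}} \prod_{\epsilon \in \cV} \E(\1_{E_\epsilon} \mid Z_{k-1})^{[\epsilon]} \diff \sigma_a^{\hkbraket{\ell}}.
\end{equation*}
Because $E_\epsilon$ is $U^k(X,\mu,T)$-uniform, $\E(\1_{E_\epsilon}\mid Z_{k-1}) = \mu(E_\epsilon)$ $\mu$-almost surely. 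The marginals of $\sigma_a^{\hkbraket{\ell}}$ at the coordinates $\epsilon \in \hkbraket{\ell}^*$ are bounded by $\mu$ (since $\tilde \sigma_a^{\hkbraket{\ell}}$ is a lift and the computation in \Cref{sec useful lemmas} shows marginals lift to marginals bounded by $\mu$), so the $\mu$-null set where the conditional expectation fails to be constant is $\sigma_a^{\hkbraket{\ell}}$-null on each coordinate. Hence the right-hand side equals the constant $\prod_{\epsilon \in \cV} \mu(E_\epsilon) > 0$.

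Consequently $\sigma_a^{\hkbraket{\ell}}(\{\boldsymbol{x}\in X^{\hkbraket{\ell}} \mid x_\epsilon \in E_\epsilon \text{ for all }\epsilon \in \cV\}) > 0$. By \cref{lemma sigma^[k] property}\,(ii), the set of Erd\H{o}s cubes $E_a^{\hkbraket{\ell}}(X)$ is dense in $\supp \sigma_a^{\hkbraket{\ell}}$; since each $E_\epsilon$ is open (clopen, actually), I obtain an Erd\H{o}s cube $\boldsymbol{y} \in E_a^{\hkbraket{\ell}}(X)$ with $y_{\underline{0}}=a$ and $y_\epsilon \in E_\epsilon$ for every $\epsilon \in \cV$. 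Finally, \cref{Erdos cubes and progressions and sumsets}\,(ii) converts this Erd\H{o}s cube into infinite sets $B_1,\dots,B_\ell \subset \N$ such that
\begin{equation*}
  \sum_{i=1}^{\ell} \epsilon_i B_i \subset N(a,E_\epsilon) = A_\epsilon \quad \text{for all } \epsilon \in \cV,
\end{equation*}
which is exactly the desired conclusion.

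The most delicate step is the marginal-boundedness check needed to pass from ``$\E(\1_{E_\epsilon}\mid Z_{k-1}) = \mu(E_\epsilon)$ $\mu$-almost surely'' to ``equal to $\mu(E_\epsilon)$ on a set of full $\sigma_a^{\hkbraket{\ell}}$-measure at each coordinate''; once that is justified (as in \Cref{sec useful lemmas}), the remainder is the standard Erd\H{o}s-cube machinery combined with the complexity identity of \cref{theorem dynamic version of leibman complexity thrm}.
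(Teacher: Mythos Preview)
Your proposal is correct and follows essentially the same route as the paper's proof: apply the Furstenberg correspondence and the open-pronilfactor extension, invoke \cref{theorem dynamic version of leibman complexity thrm} to reduce the $\sigma_a^{\hkbraket{\ell}}$-integral to $\prod_{\epsilon\in\cV}\mu(E_\epsilon)>0$, and finish with \cref{lemma sigma^[k] property}\,(ii) and \cref{Erdos cubes and progressions and sumsets}. You are in fact more explicit than the paper about the marginal-boundedness step (the paper silently uses that the $\epsilon$-marginals of $\sigma_a^{\hkbraket{\ell}}$ for $\epsilon\in\hkbraket{\ell}^*$ equal $\mu$, which follows from the $T$-invariance of each such marginal and unique ergodicity of $Z_{\ell-1}$).
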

Before introducing the terms we can derive two major consequences. Fix $\ell \geq 2$
\begin{enumerate}
    \item The vertex family $\cV = \hkbraket{\ell}^*$ is of complexity $\ell-1$. Thus, for any $U^\ell (\Phi)$-uniform sets $A_\epsilon \subset \N $ with $\epsilon \in \hkbraket{\ell}^*$ there exist $B_1, \ldots, B_\ell$ such that 
    \begin{equation*}
        \sum_{i=1}^\ell \epsilon_i B_i \subset A_\epsilon  \quad \text{ for all } \epsilon \in \hkbraket{\ell}^*.
    \end{equation*}
    \item Consider $\cV = \{ \epsilon  \in \hkbraket{\ell} \colon  |\epsilon| =1,\ell \}$, then $\cV$ is of complexity $1$, and hence for any $U^2 (\Phi)$-uniform sets $A_1, \ldots, A_\ell, A_{\ell+1} \subset \N $ there exist infinite sets $B_1, \ldots, B_{\ell}\subset \N$ such that 
    \begin{align*}
        &B_i \subset A_i &\text{for } i\in [\ell ]\\
        &B_1 + \cdots + B_\ell \subset A_{\ell+1}&
    \end{align*} 
\end{enumerate}

For a family of distinct non-constant polynomial with integer coefficients $\cP = \{p_j \colon \Z^\ell \to \Z \mid j=1,\ldots,r\}$, we say that $\cP \cup \{0\}$ is of \emph{complexity} $k-1$ if for every ergodic system $(X, \mu,T)$ and for every $f_1, \ldots, f_r \in L^{\infty} (\mu)$
     \begin{equation} \label{eq Leibman complexity}
        \lim_{N \to \infty} \bigg\lVert \frac{1}{N^\ell} \sum_{n \in \{1, \ldots, N\}^{\ell}} \prod_{j=1}^r T^{p_j(n)} f_j  - \frac{1}{N^\ell} \sum_{n \in \{1, \ldots, N\}^{\ell}} \prod_{j=1}^r T^{p_j(n)}  \E(f_j \mid Z_{k-1}) \bigg\rVert_{L^2(\mu)} =0.
    \end{equation}

    In the linear case, a useful criterion was derived by Leibman in \cite{leibman2010complexityofpatterns} and coincides with the \emph{true complexity} notion due to Gowers and Wolf \cite{gowers_wolf2010true}. A family of linear forms $\cL = \{ L_j \colon \Z^\ell \to \Z \mid j=1, \ldots, r \}$, has complexity $k-1$ if and only if $k$ is the minimal integer such that the polynomial vectors $(1, L_1(n), \ldots,(L_1(n))^k), \ldots,$ $ (1, L_r(n), \ldots,(L_r(n))^k) $ are linearly independent.

Now if we stick to the case of linear forms $L_j \colon \Z^\ell \to \Z$ with coefficients $0$ or $1$, then for all $n \in Z^\ell$, $L_j(n) = \epsilon \cdot n$ for some $\epsilon = \epsilon(j) \in \hkbraket{\ell}^*$. We denote such linear form $L_j = V_{\epsilon}$, $(\epsilon = \epsilon(j))$.  
In particular for every $g \in L^2(\mu)$, by the convergence of cubic averages in \cite{Host_Kra_nonconventional_averages_nilmanifolds:2005} and by \eqref{eq Leibman complexity}, 
\begin{align*}
    \int_{X^{\hkbraket{\ell}}} g^{[\underline{0}]} \cdot \prod_{j=1}^r  f_j^{[\epsilon(j)]}  \diff \mu^{\hkbraket{\ell}} &= \lim_{N \to \infty} \frac{1}{N^\ell} \sum_{n \in \{1, \ldots, N\}^{\ell}} \int_X g \cdot  \prod_{j=1}^r T^{L_j(n)} f_j  \diff \mu \\ 
    &= \lim_{N \to \infty} \frac{1}{N^\ell} \sum_{n \in \{1, \ldots, N\}^{\ell}} \int_X g \cdot  \prod_{j=1}^r T^{L_j(n)} \E(f_j \mid Z_{k-1})  \diff \mu \\
    &= \int_{X^{\hkbraket{\ell}}} g^{[\underline{0}]} \cdot \prod_{j=1}^r  \E(f_j \mid Z_{k-1})^{[\epsilon(j)]}  \diff \mu^{\hkbraket{\ell}}.
\end{align*}

\begin{definition}
    For $\ell, k \in \N$, we say that a set of vertex $\mathcal{V} \subset \hkbraket{\ell}^*$ is of complexity $k-1$ if the family of linear functions $\cL = \{ V_\epsilon \mid \epsilon \in \mathcal{V} \} \cup \{0 \}$ is of complexity $k-1$. 
\end{definition}

By the previous discussion, if $\mathcal{V} \subset \hkbraket{\ell}^*$ is of complexity $k-1$ then for all $g \in L^2(\mu)$ and $f_\epsilon \in L^\infty (\mu)$, $\epsilon \in \mathcal{V}$  
\begin{equation} \label{eq def vertex complexity}
    \int_{X^{\hkbraket{\ell}}} g^{[\underline{0}]} \cdot \prod_{\epsilon \in \cV}  f_\epsilon^{[\epsilon]}  \diff \mu^{\hkbraket{\ell}} = \int_{X^{\hkbraket{\ell}}} g^{[\underline{0}]} \cdot \prod_{\epsilon \in \cV}  \E(f_\epsilon \mid Z_{k-1})^{[\epsilon]} \diff \mu^{\hkbraket{\ell}}
\end{equation}

The next proposition states that the equality in \eqref{eq def vertex complexity} is maintained under disintegration.

\begin{proposition} \label{theorem dynamic version of leibman complexity thrm}
Let $(X, \mu,T)$ be an ergodic system and $a \in \gen(\mu, \Phi)$ for some F\o lner sequence $\Phi$. 
 Take $f_\epsilon \in C(X)$ for $\epsilon \in \mathcal{V} \subset \hkbraket{\ell}^*$ a set of vertex of complexity $k-1$, then
     \begin{equation} \label{eq equality measure sigma[k]}
         \int_{X^{\hkbraket{\ell}}} \prod_{\epsilon \in \cV}  f_\epsilon^{[\epsilon]} \diff \sigma_a^{\hkbraket{\ell}} = \int_{X^{\hkbraket{\ell}}} \prod_{\epsilon \in \cV} \E(f_{\epsilon} \mid Z_{k-1})^{[\epsilon]} \diff \sigma_a^{\hkbraket{\ell}}
     \end{equation}
     where the measure $\sigma_a^{\hkbraket{\ell}}$ is given in \cref{definition of sigma cubic}.
\end{proposition}

\begin{proof}

   By definition, since $\underline{0} \not \in \cV$,
   \begin{equation*}
       \int \prod_{\epsilon \in \cV} f_\epsilon^{[\epsilon]} \diff \sigma^{\hkbraket{\ell}}_a =   \int \prod_{\epsilon \in \cV} \E(f_\epsilon^{[\epsilon]}\mid Z_{\ell-1}) \diff \xi^{\hkbraket{\ell}}_a
   \end{equation*}

    Consider first $(X, \mu,T)$ to be an $(\ell-1)$-step pronilsystem. By continuous disintegration (see \cref{prop cont disintegration and conditional expectation} and \cref{prop cont k-step max pronilfactors})  $g_\epsilon = \E(f_{\epsilon} \mid Z_{k-1})$ is continuous for all $\epsilon \in \cV$. By \eqref{eq def vertex complexity} for $g = \1$, we get that 
    \begin{equation*}
          \int_{X^{\hkbraket{\ell}}} \prod_{\epsilon \in \cV}  f_\epsilon^{[\epsilon]} \diff \mu^{\hkbraket{\ell}} = \int_{X^{\hkbraket{\ell}}} \prod_{\epsilon \in \cV} g^{[\epsilon]}_\epsilon  \diff \mu^{\hkbraket{\ell}}.
    \end{equation*}
    Since $g_\epsilon$ is continuous for all $\epsilon \in \cV$  and $y \mapsto \sigma^{\hkbraket{\ell}}_y$  is a continuous disintegration of $\mu^{\hkbraket{\ell}}$ (see \cite[Theorem 7.1]{kmrr1}), we conclude \eqref{eq equality measure sigma[k]}.

    Now for $X$ an arbitrary ergodic system, assume that $\norm{f_\epsilon}_\infty \leq 1$ for all $\epsilon\in \cV$. Consider $\eta>0$ and take $g_\epsilon \in C(Z_{\ell-1})$ such that $\norm{g_\epsilon - \E(f_\epsilon \mid Z_{\ell-1})}_{L^2(m_{\ell-1})} < \eta$ for every $\epsilon \in \cV$. Then since $\underline{0} \not \in \cV$, for $a \in \gen(\mu,\Phi)$

    \begin{align*}
        &\left| \int \prod_{\epsilon \in \cV} g_\epsilon \diff \xi^{\hkbraket{\ell}}_a  -  \int \prod_{\epsilon \in \cV} f_\epsilon  \diff \sigma^{\hkbraket{\ell}}_a \right| = \left| \int \prod_{\epsilon \in \cV} g_\epsilon \diff \xi^{\hkbraket{\ell}}_a  -  \int \prod_{\epsilon \in \cV} \E(f_\epsilon \mid Z_{\ell-1}) \diff \xi^{\hkbraket{\ell}}_a \right| \\
        \leq& \prod_{\epsilon \in \cV} \norm{g_\epsilon - \E(f_\epsilon \mid Z_{\ell-1})}_{L^2(\mu)} \leq |\cV| \cdot \eta
    \end{align*}
    where for the first inequality we use the triangular inequality $|\cV|$ times, then Cauchy-Schwarz inequality, the fact that the projection of $\xi_a$ to any coordinate $\varepsilon \in \hkbraket{\ell}^*$ is $m_{\ell-1}$ and that since $\norm{f_\epsilon}_\infty \leq 1$, $\E(f_\epsilon \mid Z_{\ell-1}) \leq 1$ almost surely. With this we can derive the general case by the first part. 
\end{proof}

From \cref{theorem dynamic version of leibman complexity thrm} we derive \cref{theorem uniformity with Leibman criterion}.

\begin{proof}[Proof of \cref{theorem uniformity with Leibman criterion}]

By \cref{theorem dynamic version of leibman complexity thrm} if $\{E_\epsilon \colon \epsilon \in \cV\}$ is a familly of $U^k(X, \mu,T)$-uniform sets and $\cV \subset \hkbraket{\ell}$ is a set of vertex with complexity $k-1$, then 
\begin{align*}
    \sigma^{\hkbraket{\ell}}_a\Big( \bigcap_{\epsilon \in \cV} \{\boldsymbol{y} \in X^{\hkbraket{\ell}} \mid y_\epsilon \in E_\epsilon\}\Big) &= \int_{X^{\hkbraket{\ell}}} \prod_{\epsilon \in \cV} \E(\1_{E_{\epsilon}} \mid Z_{k-1})^{[\epsilon]}  \diff \sigma_a^{\hkbraket{\ell}} = \prod_{\epsilon \in \cV} \mu(E_\epsilon) >0.
\end{align*} 
With that, using Lemmas \ref{lemma sigma^[k] property} (ii) and \ref{Erdos cubes and progressions and sumsets}, there exist infinite sets $B_1, \ldots, B_\ell \subset \N$ such that for all $\epsilon \in \cV$,  $\sum_{i=1}^\ell \epsilon_i B_i \subset N(a, E_\epsilon)$. We conclude as in \Cref{sec dynamic reformulation}. 
\end{proof}

\begin{remark*}
    With the previous proof and similar to the analysis from \Cref{sec intersection nilbor}, one can derive a version of \cref{theorem dynamic version of leibman complexity thrm} intersecting the $U^k(\Phi)$-uniform sets $A_\epsilon $ with a $\nilbohrO{s}$ set $D$, for $s < k$.    
\end{remark*}

\section{Applications from substitution subshifts} \label{sec constant length substitutions}

The goal of this section is firstly to prove \cref{constant length subs sumset intro} and exploit its consequences. Secondly, to show that \cref{theorem summary} cannot be generalized to any minimal system. For both result we need to introduce substitution subshifts (for a more systematic analysis on substitutions refer to \cite{allouche_shallit2003automatic} and \cite{Queffelec1987}) 

We say that a finite set $\cA$ is an alphabet and we denote $\cA^*$ the set of all finite words with letters in $\cA$. The set $\cA^*$ is a monoid for concatenation of words. A substitution $\sigma \colon \cA^* \to \cA^*$ is a concatenation endomorphism, that is $\sigma(uw) = \sigma(w) \sigma(u)$ for all $u,w \in A^*$. We say that a substitution $\sigma \colon \cA^* \to \cA^*$ is primitive if for all $a \in \cA$ there exists $k \in \N$ such that every letter $b$ appears in the word $\sigma^k(a)$. 

For a word $w=w_0 \cdots w_r \in \cA^*$ we denote its length $|w|=r+1$ (we highlight that every word is indexed from $0$). We say that a substitution $\sigma\colon \cA^* \to \cA^*$ has constant-length $\ell$ if $|\sigma(a)| = \ell $ for all $a\in \cA$. 

Take a primitive substitution $\sigma \colon \cA^* \to \cA^*$ and suppose that for some $a \in \cA$ the first letter of $\sigma(a)$ is $a$ and $|\sigma(a)| \geq 2$, then $\lim_{k \to \infty} |\sigma^k(a)|=\infty$ and we denote $u = \sigma^\infty(a) \in \cA^{\N_0}$ the infinite word such that $u_n =( \sigma^k(a))_n$ for all $n \in \N_0$ and big enough $k$. Likewise, if for some $b \in \cA$, the word $\sigma(b)$ ends with $b$, we consider the analogous two-sided infinite word $\sigma^\infty(b \bullet a) \in \cA^\Z$ (where the $\bullet$ symbol indicates the $0$ index on the right-hand-side of it). Under those assumptions, if $ba$ appears in $\sigma^k(c)$ for some $k \in \N$ and $c \in \cA$, then we denote $X(\sigma)  = \overline{\{ S^n \sigma^\infty(b \bullet a)  \mid n \in \Z\} } $ where $S\colon \cA^\Z \to \cA^\Z$ is the shift map. $(X(\sigma), S)$ is the \emph{substitution subshift given by} $\sigma$ and it does not depend on the choice of $ba$.  The system $(X(\sigma), S)$ is minimal and uniquely ergodic. 

\subsection{Uniform sequences arising from substitution subshifts} 
The following theorem implies \cref{constant length subs sumset intro}.

\begin{theorem} \label{thrm uniformity constant length substitution}
    Let $(X(\sigma), S)$ be a substitution subshift and $\sigma$ be a primitive constant-length substitution. Let also $f \in C(X(\sigma))$, $x \in X(\sigma)$ and define the sequence $\phi(n) = f(S^n x)$ for all $n \in \N_0$. If for every periodic sequence $(\psi(n))_{n \in \N}$
    \begin{equation} \label{eq orthogonal to periodic}
        \lim_{N \to \infty} \frac{1}{N} \sum_{n=1}^N \phi(n) \psi(n) =0
    \end{equation}
    then $\norm{\phi}_{U^k(\Phi)} =0$ for all $k \in \N$ and for any F\o lner sequence $\Phi$. 
\end{theorem}

For the proof, we introduce the height $h(\sigma)$ of a substitution. Given a primitive constant length substitution $\sigma \colon \cA^* \to \cA^*$,  $h(\sigma) = \max\{ n \in \N \mid (n,\ell)=1, n $ divides $\gcd\{i \colon u_i = u_0\}\}$, where $u = \sigma^\infty(a)$ for some $a \in \cA$. Notice that $h(\sigma) \leq |\cA|$. For this work the value of $h(\sigma)$ is not particularly important, we only use it to characterize the Kronecker factor. In later examples $h(\sigma)=1$.
Before proving \cref{thrm uniformity constant length substitution}, we conveniently introduce the following lemma, that can be found in the proof of \cite[Proposition 2.21]{Host_Kra09}. 

\begin{lemma} \label{lemma key for constant length nilfactors}
    $(X(\sigma), \mu, S)$ be a substitution subshift with $\sigma \colon \cA ^* \to \cA^*$ a primitive constant length substitution. If $Z_k$ denote the $k$-step pronilfactor of $(X(\sigma), \mu, S)$ for $k \in \N$, then $Z_1 = Z_k$ for all $k \in \N$ and the factor map $\pi \colon X(\sigma) \to Z_1$ is continuous. 
\end{lemma}

\begin{proof}
    Without lost of generality we assume that $X(\sigma)$ is infinite. The fact that $\pi \colon X(\sigma) \to Z_1$ is continuous is a direct consequence of \cite[Theorem 6.2]{Queffelec1987}. Now, we denote by $\ell$ the length of the substitution $\sigma \colon \cA^* \to \cA^*$. 
    
    By Dekking's result \cite[Theorem 13]{Dekking1978}, the Kronecker factor $Z_1$ is isomorphic to $(\Z/h\Z) \times \Z_{(\ell)}$ where $\Z_{(\ell)} = \{ z\in \prod_{n\in \N}(\Z/\ell^n\Z) \mid z_n = z_{n+1} \mod \ell^n\} $ denotes the $\ell$-odometer and $h=h(\sigma)$. 
    
    If one considers $p \colon Z \to \Z_{(\ell)} $ the projection to the second coordinate in $(\Z/h\Z) \times \Z_{(\ell)}$ and $\theta = p \circ \pi$, we get that $\theta \colon X(\sigma) \to \Z_{(\ell)}$ is also a continuous factor map and it is almost $|\cA|$-to-one. In particular, $\pi$ is almost finite-to-one, and since the $2$-step pronilfactor of $(X(\sigma), \mu, S)$ is an extension of $Z_1$ by a connected abelian group (see \cite{Host_Kra_nonconventional_averages_nilmanifolds:2005}), it has to be equal to $Z_1$. Thus, $Z_k = Z_1$ for all $k \in \N$.
\end{proof}

\begin{proof}[Proof of \cref{thrm uniformity constant length substitution}]
    Let $(X(\sigma), \mu, S)$ be a substitution subshift as in the statementand $Z = (\Z/h\Z) \times \Z_{(\ell)}$ the Kronecker factor that coincides with the $\infty$-step pronilfactor by \cref{lemma key for constant length nilfactors}. Let also $\pi \colon X(\sigma) \to Z$ be the continuous factor map.  

    The topology of $Z$ is generated by the clopen sets $\{i\} \times E_{m,j}$ where $i \in \Z/h\Z$ and $E_{m,j}=\{ z \in \Z_{(\ell)}  \mid z_m=j\} $ for $m \in \N$ and $0\leq j <\ell^m$. Notice that under the action $R \colon Z \to Z $ given by $(y,z) \mapsto (y,z) + (1,1) $, fixing arbitrary $i \in \Z/h\Z$, $m \in \N$, $0\leq j <\ell^m$,  and $(y,z) \in Z$ the set $\{ n \in \N \mid R^n(y,z) \in \{i\} \times E_{m,j} \} = (h\N+i') \cap (2^m \N + j')$ for some $0\leq i'<h$ and $0\leq j' <2^m$ (depending on $(y,z)$). 
    
    With this, consider $f \in C(X(\sigma))$ and $x \in X(\sigma)$ as in the statement. For all, $i \in \Z/h\Z$, $m \in \N$ and $0\leq j <\ell^m$, $\1_{\pi^{-1}(\{i\} \times E_{m,j})}$ is a continuous function, hence since by unique ergodicity $x$ is generic we get that

    \begin{align*}
        \int_{X(\sigma)} f (x) \1_{\{i\} \times E_{m,j}} (\pi(x)) \diff \mu(x) 
         =& \lim_{N \to \infty} \frac{1}{N} \sum_{n=0}^{N-1} f(S^n x) \1_{\{i\} \times E_{m,j}}(R^n \pi (x)) \\ 
        =&\lim_{N \to \infty} \frac{1}{N} \sum_{n =0}^{N-1} \phi(n) \psi(n)
    \end{align*}  
    where $\psi $ is the periodic function $ \1_{(h\N+i') \cap (\ell^m \N + j')}$ for some  $0\leq i'<h$ and $0\leq j' <\ell^m$ depending on $\pi(x)$. Therefore, by \eqref{eq orthogonal to periodic} we get that
    \begin{align*}
        \int_X f (x) \1_{\{i\} \times E_{m,j}} (\pi(x)) \diff \mu(x) = 0
    \end{align*} 
    and since the family of clopen sets $\{i\} \times  E_{m,j}$ generates the Borel sigma algebra in $Z$ we get that $\E(f \mid Z) =0$. With the previous analysis this implies that $\norm{f}_{U^k(X(\sigma), \mu, T)} =0$ for all $k \in \N$. Thus by construction and unique ergodicity  $\norm{\phi}_{U^k(\Phi)} =0$ for any F\o lner sequence $\Phi$.
\end{proof}

\begin{remark} \label{remark periodicity needed for constant legnth}
    With the previous proof, given a constant length substitution $\sigma$ if one computes the value of $h=h(\sigma)$, then one gets the same conclusion of \cref{thrm uniformity constant length substitution} by only checking \eqref{eq orthogonal to periodic} with sequences of the form $\psi(n) = \1_{h\N +i}(n) \1_{\ell^m\N + j }(n)$ for $i =0, \ldots, h-1$, $m \in \N$ and $j =0, \ldots, \ell^m-1$. As we see in the next proof this is also the case for \cref{constant length subs sumset intro}. 
\end{remark}

\begin{proof}[Proof of \cref{constant length subs sumset intro}]
    If $E \subset X(\sigma)$ is a clopen set, then the function $f = \1_E - \mu(E)$ is continuous. Taking $\phi(n) = f(S^nx) $ for $x \in X(\sigma)$ as in the assumption and by \cref{thrm uniformity constant length substitution}, we get that $\norm{\phi}_{U^k(\Phi)}=0$ for all $k \in \N$. This by definition implies that  $A= \{ m \in \N \mid S^mx\in E\} $ is $U^\infty(\Phi)$-uniform. The sumset part of \cref{constant length subs sumset intro} is a consequence of \cref{U^infty dynamical}. 
\end{proof}

\begin{remark*}
    Notice that under the assumptions of \cref{thrm uniformity constant length substitution}, by unique ergodicity of the system one deduces that
    \begin{equation*}
        \norm{\phi}_{U^k(\Phi)} = \lim_{H \to \infty} \frac{1}{H^k} \sum_{h \in \{1, \ldots, H\}^k} \bigg| \lim_{N \to \infty} \frac{1}{|\Phi_N|} \sum_{n \in \Phi_N} \prod_{\epsilon \in \hkbraket{k}} \phi(n + \epsilon \cdot h) \bigg|^{2^k} =0. 
    \end{equation*}
    In particular, from the next section, for the Thue-Morse sequence $(t(n))_{n \in \N_0}$ (see \cite[Proposition 3.3]{baake_coons2024correlations_Thue_Morse} for the case $k=1$) 
    \begin{equation*}
    \lim_{H \to \infty} \frac{1}{H^k} \sum_{h \in \{1, \ldots, H\}^k} \bigg| \lim_{N \to \infty} \frac{1}{N} \sum_{n =0}^{N-1} \prod_{\epsilon \in \hkbraket{k}} (t(n + \epsilon \cdot h) -1/2) \bigg|^{2^k} =0. 
\end{equation*}
\end{remark*}

\subsection{$U^\infty(\Phi)$-uniformity for subwords of the Thue-Morse sequence} \label{sec application to Thue-Morse}

Consider $(t(n))_{n \in \N_0}$ the Thue-Morse sequence defined in the introduction. Consider also $\tau \colon \{0,1\}^* \to \{0,1\}^*$ the substitution given by $\tau(0)=01$ and $\tau(1)=10$, then  $t(n) = (\tau^\infty(0))_n$ for all $n \in \N_0$. The morphism $\tau$ is called the Thue-Morse substitution and $(X(\tau),S)$ is the Thue-Morse subshift.

For a finite alphabet $\cA$, we say that a word $w\in \cA^{r+1}$ is a prefix of a word $u \in A^{s+1}$ for $s \geq r$, if $w_i=u_i$ for all $i =0, 1,\ldots,r$.  For a word $w \in \{0,1\}^*$ denote $A_w=\{ m \in \N \mid t(m)=w_0, \ldots, t(m+r) = w_r\}$. The main result of this section is the following.

\begin{proposition} \label{thrm infty uniformity thue-morse}
     If $w$ is a prefix of $\tau^k(0)$ or $\tau^k(1)$ for some $k \in \N$, then $A_w / 2^k \N = \{ n \in \N \mid 2^kn \in A_w\}$ is $U^{\infty}(\Phi)$-uniform for any F\o lner sequence $\Phi$.
\end{proposition}

 With this we recover part of \cite[Theorem 3.1]{bucci_Hindman_Puzynina_Zamboni2013additive_thue_morse}.
\begin{corollary}  \label{cor sumset thue-morse}
    If $w$ is a prefix of $\tau^k(0)$ or $\tau^k(1)$ for some $k \in \N$, then there exists $B = \{ b_1 < b_2 < \ldots\} \subset \N$ infinite such that 
    \begin{equation*}
        \bigcup_{k \in \N} (B \setminus \{ b_1, \ldots, b_{k-1}\})^{\oplus k} \subset A_w
    \end{equation*}
\end{corollary}

\begin{proof}
    Fix $w$ and $k \in \N$ as in the statement, by \cref{thrm infty uniformity thue-morse}, $A_w/2^k \N $ is $U^{\infty}(\Phi)$-uniform, hence using \cref{U^infty dynamical} we get that there exists an infinite set $C = \{ c_1 < c_2 < \cdots \} \subset \N$ such that 
    \begin{equation*}
        \bigcup_{k \in \N} (C \setminus \{ c_1, \ldots, c_{k-1}\})^{\oplus k} \subset A_w/2^k\N.
    \end{equation*}
    Thus we conclude by taking $B= 2^k C$. 
\end{proof}

    \begin{corollary} \label{prop infty uniform but not IP}
        There exists a $U^\infty(\Phi)$-uniform set that is not an IP-set.
    \end{corollary}

    \begin{proof}
        By \cite[Corollary 3.6]{bucci_Hindman_Puzynina_Zamboni2013additive_thue_morse} the set $A_1$ is not an IP-set and by \cref{thrm infty uniformity thue-morse} it is $U^\infty(\Phi)$-uniform for all F\o lner sequence $\Phi$.
    \end{proof}

To prove \cref{thrm infty uniformity thue-morse}, we need the following lemma. 

\begin{lemma} \label{lemma frequency letter thue-morse diadics}
     For all $a =0,1$, $k \in \N$ and $0 \leq j < 2^k $,
    \begin{equation*} 
        d(A_a \cap (2^k \N + j )) = \frac{1}{2^{k+1}}.
    \end{equation*}
\end{lemma}

\begin{proof}
    Fix $k \in \N$, $0 \leq j < 2^k $ and $a \in \{0,1\}$, then
    \begin{align} \label{eq thue 2k}
        d(A_a \cap (2^k \N+j)) = \lim_{N \to \infty} \frac{1}{2^k \cdot N} \sum_{n = 0}^{N-1} \1_{t(2^kn +j) = a} 
    \end{align}
     By substitution rule, if we denote $u(n)$ the word $\tau^{k}(t(n))$ we get that $t(2^kn +j ) = u(n)_j$ and so $t(2^kn +j) = a$ if and only if $a = u(n)_j$. Notice that the value of $u(n)_j$ is uniquely determined by the value of $t(n)$ and there is only one letter $b \in \{0,1\}$ (that depends on $a,k$ and $j$ but does not depend on $n$), for which $(\tau^{k}(b))_j = a$. Thus $t(2^kn +j) = a$ if and only if $t(n) = b$. With this, replacing on \eqref{eq thue 2k},
    \begin{align*}
        d(A_a \cap (2^k \N+j)) = \lim_{N \to \infty} \frac{1}{2^k \cdot N} \sum_{n = 0}^{N-1} \1_{t(n) = b}  = \frac{1}{2^{k+1}}
    \end{align*}
    where the last equality is given by the frequency of the letters in the Thue-Morse word. 
\end{proof}

With this we prove the special case of \cref{thrm infty uniformity thue-morse} when $w=0$ or $w=1$.

\begin{corollary} \label{thrm intro infty uniformity thue-morse}
    For $a=0,1$, $A_a$ is $U^{\infty}(\Phi)$-uniform for any F\o lner sequence $\Phi$.
\end{corollary}

\begin{proof} For the Thue-Morse subshift $h(\tau) =1$, then by \cref{constant length subs sumset intro} and \cref{remark periodicity needed for constant legnth} taking $\mathbf{t} = (\tau^2)^\infty(1 \bullet0)$ we only need to check that 
\begin{equation*}
    \lim_{N \to \infty} \frac{1}{N} \sum_{n=1}^N (\1_{C_a}(S^n \mathbf{t}) - \mu(C_a)) \1_{2^m\N + j} (n)=0
\end{equation*}
where $C_a = \{ y \in X(\tau) \mid y_0 = a\}$ for $a = 0,1$ and $m \in \N$, $0 \leq j < 2^m$. Since $\mu(C_a) = 1/2$, this is equivalent to check that
\begin{equation} \label{eq dentro de la dem}
   \lim_{N \to \infty} \frac{1}{N} \sum_{n=1}^N \1_{C_a}(S^n \mathbf{t}) \1_{2^m\N + j} (n)=1/2^{m+1}
\end{equation}
since $A_a = \{ n \in \N \mid S^n \mathbf{t} \in C_a\}$, \eqref{eq dentro de la dem} is ensured by \cref{lemma frequency letter thue-morse diadics}.
\end{proof}

\begin{lemma} \label{lemma equality prefix suffix}
    Let $a \in \{0,1\}$. If $w$ is a prefix of $\tau^k(a)$ for some $k \in \N$, then $A_w \cap 2^k\N = A_a \cap 2^k\N$.
\end{lemma}

\begin{proof}
    As was previously exploited, $t(2^kn) = t(n)$ and more generally for all $0 \leq j < 2^k$, $t(2^kn+j) = \tau^k(t(n))_j$. In particular the values of $t(2^kn+j)$ are determined by the value of $t(2^kn)$ so that $t(2^kn) t(2^kn+1) \cdots t(2^k + r)$ is the prefix of length $r+1 < 2^k$ of the word $\tau^k(t(2^kn))$ concluding the proof.  
\end{proof}

\begin{proof}[Proof of \cref{thrm infty uniformity thue-morse}]
    For $a \in \{0,1\}$ the set $A_a$ is $U^{\infty}(\Phi)$-uniform for any F\o lner sequence $\Phi$, then $A_a/2^k\N$ is $U^{\infty}(\Psi)$-uniform for any F\o lner sequence $\Psi$. Since $A_a/2^k\N = (A_a \cap 2^k\N) /2^k\N$ we conclude by \cref{lemma equality prefix suffix}. 
\end{proof}

\begin{remark*}
    In this section we only exploited some general properties of the Thue-Morse substitution. In particular, instead of  $(t(n))_{n \in \N_0}$ one can consider a sequence $(s(n))_{n \in \N_0}$ given by $s(n) = (\sigma^\infty (a))_n$ for all $n \in \N_0$ where $a$ is a letter in a finite alphabet $\cA$ and $\sigma\colon \cA^* \to \cA^*$ is a primitive constant-length one-to-one on letters substitution with $h(\sigma)=1$. These technical terms are defined in \cite[Chapter 6]{Queffelec1987}.
\end{remark*}

We finish this section by proving a version of \cref{thrm intro infty uniformity thue-morse} and \cref{cor sumset thue-morse} for the Rudin-Shapiro sequence $(r(n))_{n \in \N_0}$ defined in the introduction. Denote $R_a = \{ n \in \N \mid r(n)=a\}$ for $a =0,1$. 

\begin{proposition} \label{prop sumset and uniformity Rudin-shapiro}
    The set $R_a$ is $U^\infty(\Phi)$-uniform for any F\o lner sequence $\Phi$. In particular contains the pattern described in \eqref{eq pattern un constant length subs}.
\end{proposition}

\begin{proof}
    Let $\zeta \colon \{0,1,2,3\}^* \to \{0,1,2,3\}^* $ be the substitution given by $\zeta(0)=01$, $\zeta(1)=02$, $\zeta(2)=31$ and $\zeta(3)=32$. In $(X(\zeta),\mu,S)$ consider $\mathbf{r}= (\zeta^2)^\infty(1\bullet0)$ and the continuous function $f \in C(X(\sigma))$, $f(x) = 0$ if $x_0 \in \{0, 1\}$ and $f(x) = 1$ if $x_0 \in \{2,3\}$. We get that $f(S^n \mathbf{r}) = r(n)$ for all $n \in \N$ and therefore $R_a = \{ n \in \N \mid f(S^n \mathbf{r}) = a\}$ for $a =0,1$. We prove the case $a=1$ noticing that the case $a=0$ is identical.

    Since $h(\zeta) = 1$, using \cref{constant length subs sumset intro} and \cref{remark periodicity needed for constant legnth}, we need to check that
    \begin{equation*}
        \lim_{N \to \infty} \frac{1}{N} \sum_{n=1}^N f(S^n\mathbf{r}) \1_{2^k\N +j}(n) = 1/2^{k+1}
    \end{equation*}
    for all $k \in \N$ and $0 \leq j < 2^k$ to conclude the proposition. 
    
    Fix $k \in \N$ and $0 \leq j < 2^k$. Notice that for $n$ even $(S^n \mathbf{r})_0 \in \{0,2\}$ and for $n$ odd $(S^n \mathbf{r})_0 \in \{1,3\}$, with this one can split the proof based on the parity of $j$. Without lost of generality take $j$ even:
    \begin{align*}
        \lim_{N \to \infty} \frac{1}{N} \sum_{n=1}^N f(S^n\mathbf{r}) \1_{2^m\N +j}(n) = \lim_{N \to \infty} \frac{1}{2^k \cdot N} \sum_{n = 0}^{N-1} f(S^{2^kn+j}\mathbf{r})  \\ 
        = \lim_{N \to \infty} \frac{1}{2^k \cdot N} \sum_{n = 0}^{N-1} \1_{(S^{2^kn+j}\mathbf{r})_0 = 2 }.
    \end{align*}
    By substitution rule the letter $(S^{2^kn+j}\mathbf{r})_0$ depends on $(S^{n}\mathbf{r})_0$. Now, by the shape of the substitution $\zeta$, there are exactly two letters let say $a,b \in \{0,1,2,3\}$, for which $(\zeta^k(a))_{j} = (\zeta^k(b))_{j}= 2$. In other words $(S^{2^kn+j}\mathbf{r})_0 = 2$ if and only if $(S^{n}\mathbf{r})_0 = a$ or $b$. Thus,
    \begin{align*}
        \lim_{N \to \infty} \frac{1}{2^k \cdot N} \sum_{n = 0}^{N-1} \1_{(S^{2^kn+j}\mathbf{r})_0 = 2 }  =  \lim_{N \to \infty} \frac{1}{2^k \cdot N} \sum_{n = 0}^{N-1}( \1_{(S^{n}\mathbf{r})_0 = a } + \1_{(S^{n}\mathbf{r})_0 = b } ) = 1/2^{k+1}
    \end{align*}
    concluding the proof.
\end{proof}

\subsection{A counterexample to generalizations of \cref{theorem summary}}

 The counterexample in this section is based in \cite{bucci_Hindman_Puzynina_Zamboni2013additive_thue_morse}.

 \begin{proposition} \label{prop counterex gen BB}
     There exist a minimal and uniquely ergodic system $(X,\mu,T)$ with topological pronilfactors, two points $x,y \in X$ and $V$ a neighborhood of $y$ such that $(x, y) \in \RP^{[1]}(X)$ but there is no infinite $B \subset \N$ with $B \oplus B \subset N(x,V)$.
 \end{proposition}

\begin{proof}

Consider the substitution $\sigma \colon \{0,1,2\}^* \to \{0,1,2\}^*$ given by $\sigma(0)=02$, $\sigma(1)=20$ and $\sigma(2) = 10$ and the minimal substitution subshift $(X(\sigma), S)$. As a consequence of \cite[Theorem 13]{Dekking1978} the maximal Kronecker factor of $(X(\sigma), \mu, S)$ is the $2$-odometer $(\Z_{(2)}, \nu,R)$, where $\mu$ is its unique invariant measure. Notice that the map $\tilde \theta \colon \{0,1,2\} \to \{0,1\}$ that sends $0 \mapsto 0$ and $1,2 \mapsto 1$ define a continuous factor map $\theta \colon X(\sigma) \to X(\tau)$, where as in the previous section $\tau$ is the Thue-Morse substitution. If $\rho \colon X(\tau) \to \Z_{(2)}$ is the continuous factor map then $\pi = \rho \circ \theta \colon X(\sigma) \to \Z_{(2)}$  is a continuous factor map.

Take the cilinder set $V=\{ x' \in X(\sigma) \mid x'(0) = 2\}$ and consider the points  $x=(\sigma^{2})^{\infty}(2 \bullet 0)$ and $y=(\sigma^{2})^{\infty}(0 \bullet 2)$. Notice that $\theta(x) = (\tau^{2})^{\infty}(0 \bullet 1)$ and $\theta(y) = (\tau^{2})^{\infty}(1\bullet 0)$, and then $\pi(x) = \rho \circ \theta (x) = \rho \circ \theta (y)  = \pi(y)$. Thus, $(x,y) \in \RP^{[1]}(X(\sigma))$.
    
    Notice that if $s(n) = (S^nx)_0$ for $n \in \N_0$ then 
\begin{equation*}
s(n) =
    \begin{cases}
        \begin{array}{ccc}
             0 & \text{ if } & t(n) =0   \\
            1 & \text{ if } & t(n) = 1 \text{ and the first digit }1 \text{ in the binary base is an odd index}\\
            2 & \text{ if } & t(n) =1 \text{ and the first digit }1 \text{ in the binary base is an even index}
        \end{array}
    \end{cases}
\end{equation*}

Finally, $V$ is a neighborhood of $y$ and $N(x,V) = \{ n \in \N \mid s(n) =2\}$. Using \cite[Lemma 3.3]{bucci_Hindman_Puzynina_Zamboni2013additive_thue_morse}, we get that there is no infinite $B \subset \N$ such that $B\oplus B \subset N(x,V)$. 
\end{proof}

\section{Combinatorial applications} \label{sec combinatorial app}

\subsection{Examples of $U^k(\Phi)$-uniform sets} \label{sec examples U^k uniform sets} 

In this section we give examples of $U^k(\Phi)$-uniform sets and therefore sets where it is possible to apply Theorem \ref{theorem uniformity BB} and the result from \Cref{sec uniform sets}. We start with two general observations.

\begin{itemize}
    \item If $A$ is $U^k(\Phi)$-uniform and  $0<\diff_\Phi(A) <1$, then its compliment $\N \setminus A$ is also $U^k(\Phi)$-uniform.
    \item If $A$ is $U^k(\Phi)$-uniform, then  the translate $A+t$ remains $U^k(\Phi)$-uniform, for any $t \in \Z$.
\end{itemize}

\begin{example*}
    Fix $\alpha \not \in \Q$, $U\subset \S^1$ an open set, $k \geq 2$ and $p(t) \in \Q[t]$ a polynomial of degree $k$, then the set
    \begin{equation*}
    A = \{ n \in \N \mid e(p(n) \alpha) \in U \}
\end{equation*}
is $U^{k}(\Phi)$-uniform, for any F\o lner $\Phi$. Moreover, if $m_{\S^1}(\S^1 \backslash U)>0$, then $A$ is not $U^{k+1}(\Phi)$-uniform. The proof is similar to Hardy field sequence case that we present in what follows.  
\end{example*}

First we introduce a lemma that was originally used in \cite{frantzikinakis_host2017arithmetic_sets} to prove uniformity of set coming from multiplicative function, see \cite[proof of Theorem 1.2 (ii)]{frantzikinakis_host2017arithmetic_sets}. The original statement uses Gowers norm instead of local uniformity seminorms, but the proof uses only algebraic manipulations of seminorms that still work in our context, so we do not include the proof.

\begin{lemma} \label{lemma Frantzikinakis and Host}
    Let $k \geq 2$ and $(a(n))_{n \in \N} \subset \S^1$ be a sequence such that $\norm{a^j}_{U^k(\Phi)} =0$ for all $j \in \Z \backslash \{0\}$ and the same F\o lner sequence $\Phi$. If $F \colon \S^1 \to \R$ is a  Riemman integrable bounded function with integral zero, then $\norm{F \circ  a}_{U^k(\Phi)} =0$. In particular, if $U \subset \S^1$ is a positive measure Borel set, then 
    \begin{equation*}
         A = \{ n \in \N \mid a(n) \in U \}
    \end{equation*}
    is a $U^{k}(\Phi)$-uniform set.
\end{lemma}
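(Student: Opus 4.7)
The plan is to approximate $F$ by trigonometric polynomials and exploit the subadditivity of $\|\cdot\|_{U^k(\Phi)}$ together with the hypothesis $\|a^j\|_{U^k(\Phi)}=0$ for all $j\neq 0$. Since $F$ is bounded and Riemann integrable, a standard sandwich argument produces, for every $\delta>0$, trigonometric polynomials $P_-\leq F\leq P_+$ bounded uniformly by $2\|F\|_\infty$ with $\int_{\S^1}(P_+-P_-)\,\diff m<\delta$ (for instance, Fej\'er sums of a continuous sandwich of $F$). Because $\int F\,\diff m=0$, the constant coefficient of $P_-$ has modulus less than $\delta$, so I can write $P_- = c+P_0$ with $|c|<\delta$ and $P_0 = \sum_{j\in J} c_j z^j$ a finite sum over some $J \subset \Z\setminus\{0\}$, and set $G:=F-P_-$, which satisfies $0\leq G\leq P_+-P_-$.

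Subadditivity of $\|\cdot\|_{U^k(\Phi)}$ combined with the hypothesis immediately gives
\[
\|P_-\circ a\|_{U^k(\Phi)} \leq |c|\cdot \|\1\|_{U^k(\Phi)} + \sum_{j\in J}|c_j|\,\|a^j\|_{U^k(\Phi)} = |c| <\delta.
\]
For the error piece I would use the pointwise inequality
\[
\Bigl|\prod_{\eta\in\hkbraket{k}} (G\circ a)(n+\eta\cdot h)\Bigr| \leq \|G\|_\infty^{2^k-1}\,G(a(n)),
\]
obtained by isolating the $\eta=\underline{0}$ factor and bounding the remaining $2^k-1$ factors by $\|G\|_\infty$, and then push it through the nested limits in the iterated definition of $\|\cdot\|_{U^k(\Phi)}$ to obtain
\[
\|G\circ a\|_{U^k(\Phi)}^{2^k} \leq \|G\|_\infty^{2^k-1}\,\limsup_{N\to\infty}\frac{1}{|\Phi_N|}\sum_{n\in\Phi_N} G(a(n)).
\]

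Monotonicity $\|\cdot\|_{U^0(\Phi)}\leq\|\cdot\|_{U^k(\Phi)}$ combined with the hypothesis yields $\lim_N \frac{1}{|\Phi_N|}\sum_n a(n)^j = 0$ for every $j\neq 0$, that is, Weyl equidistribution of $(a(n))$ on $\S^1$ along $\Phi$. Hence $\limsup_N \frac{1}{|\Phi_N|}\sum_n G(a(n))\leq \int_{\S^1}(P_+-P_-)\,\diff m<\delta$, and since $\|G\|_\infty$ is bounded uniformly in $\delta$, combining the two bounds and sending $\delta\to 0$ gives $\|F\circ a\|_{U^k(\Phi)}=0$. For the second statement, applying the first part with $F := \1_U - m(U)$ (Riemann integrable under the standard convention that $m(\partial U)=0$) and identifying $\diff_\Phi(A)=m(U)>0$ via equidistribution shows that $A$ is $U^k(\Phi)$-uniform. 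The main technical obstacle is checking that $F\circ a$ admits correlations along $\Phi$, so that $\|F\circ a\|_{U^k(\Phi)}$ is defined in the first place, and justifying the interchange of the pointwise product bound with the iterated limits defining the seminorm; the former follows because correlations of trigonometric polynomials in $a$ converge by hypothesis and transfer to Riemann integrable $F$ via the sandwich together with equidistribution of the shifted sequences, while the latter is a routine dominated-convergence argument once the definition is unfolded.
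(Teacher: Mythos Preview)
The paper does not include its own proof of this lemma; it refers to the proof of Theorem~1.2(ii) in Frantzikinakis--Host and notes only that the argument, consisting of ``algebraic manipulations of seminorms'', transfers verbatim to the local uniformity seminorms. Your approach---sandwich $F$ between trigonometric polynomials, use subadditivity together with the hypothesis $\|a^j\|_{U^k(\Phi)}=0$, and control the nonnegative remainder via the pointwise product bound and Weyl equidistribution---is precisely that argument, and you correctly flag the two genuine technicalities (existence of correlations for $F\circ a$, and the implicit assumption $m(\partial U)=0$ in the ``in particular'' clause).
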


\begin{example*}

Consider $B$ the collection of equivalence classes of real valued functions $\psi \colon [c, \infty) \to \R$ for some $c>0$, where we identify two functions if they agree for all large $x > c$.
A Hardy field is a subfield of the ring $(B, +, \cdot)$ that is closed under differentiation. We denote by $\cH$ the union of all Hardy fields. An example of Hardy field functions are the logarithmic-exponential functions, that is functions defined by a finite combination of $+, -, \times , \div, \log, \exp$ with real constants, originally studied by Hardy in \cite{hardy1912properties}.

    For two functions $a,b \colon [c, \infty) \to \R$ we denote $a(t) \prec b(t)$ if $\lim_{t \to \infty} a(t)/b(t) =0$. If $\psi \in \cH$, we say that $(\psi(n))_{n \in \N}$ is a Hardy field sequence. Hardy field sequences have been extensively studied in ergodic theory, see for instance \cite{bergelson_moreira_richter2020multiplehardy,Boshernitzan94hardy,donoso_koutsogiannis_Kuca_sun_tsinas2024seminormhardy,donoso_koutsogiannis_Kuca_sun_tsinas2025resolvinghardy,donoso_koutsogiannis_sun2025jointhardy,Frantzikinakis10hardy,Frantzikinakis15hardy,Frantzikinakis_Wierdl09hardy,tsinas2023jointhardy}.

\begin{proposition}
    Let $(\psi(n))_{n \in \N}$ be a Hardy field sequence with $ t^{k} \log t \prec \psi(t) \prec t^{k+1}$ for some $k \in \N$, then for any Borel set $U\subset \S^1$, the set
    \begin{equation*}
    A = \{ n \in \N \mid e(\psi(n)) \in U \}
\end{equation*}
is $U^k (\Phi)$-uniform for $\Phi_N = \{1, \ldots, N\}$. 
\end{proposition}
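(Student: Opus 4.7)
The strategy is to apply \Cref{lemma Frantzikinakis and Host} with $a(n) = e(\psi(n))$: granting the hypothesis, the lemma converts $\|F \circ a\|_{U^k(\Phi)} = 0$ for $F = \1_U - m_{\S^1}(U)$ into $U^k(\Phi)$-uniformity of $A$. The task therefore reduces to showing that for every nonzero integer $j$,
\begin{equation*}
    \|e(j\psi(\cdot))\|_{U^k(\Phi)} = 0.
\end{equation*}
Since $\cH$ is closed under scalar multiples and the growth range $t^k \log t \prec j\psi(t) \prec t^{k+1}$ is preserved under multiplication by $j \in \Z \setminus \{0\}$, it suffices to prove the statement for $j = 1$, treating $\psi$ as an arbitrary Hardy field function in the prescribed growth range.

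Next I would unwind the inductive definition of the local uniformity seminorm to obtain, with $(\Delta^*_h f)(t) := f(t+h) - f(t)$,
\begin{equation*}
    \|e(\psi)\|_{U^k(\Phi)}^{2^k} = \lim_{H_k \to \infty} \frac{1}{H_k} \sum_{h_k=1}^{H_k} \cdots \lim_{H_1 \to \infty} \frac{1}{H_1} \sum_{h_1=1}^{H_1} \lim_{N \to \infty} \frac{1}{N} \sum_{n=1}^N e\bigl( \Psi_{\underline h}(n) \bigr),
\end{equation*}
where $\Psi_{\underline h}(t) = \pm \Delta^*_{h_1} \cdots \Delta^*_{h_k} \psi(t)$ is the $k$-fold iterated finite difference of $\psi$ with integer steps $h_1,\ldots,h_k \geq 1$. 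Because $\cH$ is a differential ring closed under translations, $\Psi_{\underline h}$ lies in $\cH$. A key computation (carried out by applying the mean value theorem inductively, using that $\psi^{(k+1)}(t) \to 0$—which follows by iterated L'H\^opital in the Hardy field from $\psi(t) \prec t^{k+1}$) gives the asymptotic
\begin{equation*}
    \Psi_{\underline h}(t) = h_1 \cdots h_k \, \psi^{(k)}(t) + o_{\underline h}(1) \quad \text{as } t \to \infty.
\end{equation*}
Combining with $t^k \log t \prec \psi(t) \prec t^{k+1}$ (differentiating $k$ times in the Hardy field), one gets $\log t \prec \psi^{(k)}(t) \prec t$; in particular $h_1\cdots h_k \psi^{(k)}(t)$ tends to infinity strictly slower than any polynomial of degree $\geq 1$ and strictly faster than any constant.

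Finally, I would invoke Boshernitzan's equidistribution theorem for Hardy field sequences: a function $f \in \cH$ with $f(t) \to \infty$ and $|f(t) - p(t)| \to \infty$ for every $p \in \Q[t]$ produces an equidistributed sequence $(f(n))_{n \in \N}$ mod $1$. Applied to $f(t) = h_1 \cdots h_k\psi^{(k)}(t)$, both conditions hold: differences with constants diverge since $f \to \infty$, and differences with polynomials of degree $\geq 1$ diverge since $f(t) \prec t$ forces $f - p \to \pm\infty$. Because $\Psi_{\underline h}(n)$ differs from $f(n)$ by $o(1)$, Weyl's criterion yields $\lim_N \tfrac{1}{N} \sum_{n=1}^N e(\Psi_{\underline h}(n)) = 0$ for every tuple $\underline h \in \N^k$. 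Since the expression in $n$ is bounded by $1$, the iterated Ces\`aro averages over $h_1,\ldots,h_k$ inherit this vanishing, giving $\|e(\psi)\|_{U^k(\Phi)} = 0$ and completing the reduction to \Cref{lemma Frantzikinakis and Host}. The main technical obstacle is the Hardy field calculus underlying the asymptotic for $\Psi_{\underline h}$ and the verification that $\psi^{(k+1)}(t) \to 0$; both rest on standard L'H\^opital-type results for Hardy fields applied to the growth bound $\psi(t) \prec t^{k+1}$.
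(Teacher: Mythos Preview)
Your proposal is correct and follows essentially the same route as the paper: reduce to \Cref{lemma Frantzikinakis and Host} with $a(n)=e(\psi(n))$, then show $\|e(j\psi)\|_{U^k(\Phi)}=0$ by proving that the $k$-fold iterated finite difference $\partial_{h_1,\ldots,h_k}\psi$ gives an equidistributed sequence mod~$1$ via Boshernitzan's criterion. The only cosmetic difference is that the paper cites \cite[Lemma~3.1]{Frantzikinakis15hardy} directly for the growth bound $\log t \prec \partial_{h_1,\ldots,h_k}\psi(t) \prec t$ and applies Boshernitzan to $\partial_{h_1,\ldots,h_k}\psi$ itself, whereas you derive the finer asymptotic $\Psi_{\underline h}(t)=h_1\cdots h_k\,\psi^{(k)}(t)+o(1)$ by hand and apply Boshernitzan to $h_1\cdots h_k\,\psi^{(k)}$, transferring back via the $o(1)$ error; both arguments are equivalent here.
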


\begin{proof}
    For $h_1, \ldots, h_k \in \N$ and $n \in \N$, define $\partial_{h_1, \ldots, h_k} \psi = \partial_{h_1} \cdots \partial_{h_{k}} \psi $ where $(\partial_h \psi)(n) = \psi(n+h)-\psi(n)$ for all $n \in \N$. Fix $h_1, \ldots, h_k \in \N$, by \cite[Lemma 3.1]{Frantzikinakis15hardy}, $\log t \prec (\partial_{h_1, \ldots, h_k} \psi)(t) \prec t$. By \cite[Theorem 1.3]{Boshernitzan94hardy}, $( e( \partial_{h_1, \ldots, h_k} \psi(n)))_{n \in \N}$ equidistributes in $\S^1$ and hence
    \begin{align*}
        \norm{e(\psi(n))}_{U^k(\Phi)}^{2^k} = \lim_{H \to \infty} \lim_{N \to \infty} \frac{1}{H^k} \sum_{h_1, \ldots, h_k \leq H} \frac{1}{N} \sum_{n \leq N} \Delta_{h_k} \cdots \Delta_{h_1} e(\psi(n)) \\
        = \lim_{H \to \infty} \lim_{N \to \infty} \frac{1}{H^k} \sum_{h_1, \ldots, h_k \leq H} \frac{1}{N} \sum_{n \leq N} e( \partial_{h_1, \ldots, h_k}\psi(n)) =0. 
    \end{align*}
    Notice that for all $j \neq 0$, $t^{k} \log t \prec j \cdot \psi(t) \prec t^{k+1}$ so we can repeat the same proof and conclude that $\norm{e(j \cdot \psi(n))}_{U^k(\Phi)} = \norm{e(\psi(n))^j}_{U^k(\Phi)} =0$. Thus we conclude using \cref{lemma Frantzikinakis and Host} with $a(n) = e(\psi(n))$. 
\end{proof}
    
\end{example*}

\subsection{Sumset patterns in multiple uniform sets}  \label{sec mult sets}

 In \cref{theorem uniformity BB} one can take different $U^k(\Phi)$-uniform sets $A_1, \ldots, A_k$ with $B^{\oplus i} \subset A_i$ for some infinite set $B$ and all $i =1, \ldots,k$. One would like to know if something similar happens without the $U^k(\Phi)$-uniformity assumption.
 
 From \cite[Theorem 2.6]{kousek_radic2025unrestrictedBB}, one can deduce that if for two sets $A_1, A_2 \subset \N$ there is a F\o lner sequence $\Phi$ such that $\diff_{\Phi}(A_1) + \diff_{\Phi}(A_2) > 1 $ then there exists an infinite set $B \subset \N$ and a shift $t\in \N$ such that $B \subset A_1 -t$ and $B \oplus B \subset A_2 -t$. This bound is sharp, indeed following the same steps of \cite[Proposition 4.1]{kousek_radic2025unrestrictedBB} proof one can show that taking
\begin{align*}
    A_2  &=  \bigcup_{N \in \N} [4^N, (2-1/N) \cdot 4^N ) \quad \text{ and } \\ 
    A_1 &= \{ n \in \N \mid 2n \in A_2 \} = \bigcup_{N \in \N} [2 \cdot 4^{N-1}, (1-1/2N) 4^N) 
\end{align*}
if $B \subset \N$ is a set such that $B \subset A_1 - t$ and $B \oplus B \subset A_2 - t$ for some $t \in \N$, then $B$ is finite. Notice that for the F\o lner sequence $\Phi=(\{1,\ldots, 2 \cdot 4^N\})_{N \in \N}$, $\diff_{\Phi}(A_1) +  \diff_{\Phi}(A_2) = \frac{2}{3} + \frac{1}{3} = 1$ achieving the mentioned bound. In particular, this example shows that, in general, it is not enough to have $\diff_{\Phi}(A_1) >0 $ and $ \diff_{\Phi}(A_2) >0$ in order to find an infinite $B \subset \N$ and $t \in \N$ for which $B \subset A_1 - t$ and $B \oplus B \subset A_2 - t$.

\section{Open questions} \label{sec open q}

In this section we gather questions mentioned in the core of the paper including some further background. We start with a generalization of \cite[Conjecture 3.26]{kra_Moreira_Richter_Roberson2025problems}, for which \cref{theorem uniformity BB} is a special case. 

\begin{conjecture} \label{conjecture KMRR BB}
    Let $A_1, \ldots, A_k \subset \N$ be $U^k(\Phi)$-uniform sets for the same F\o lner sequence $\Phi$. Then for every $ 1 \leq \ell_1 < \ell_2 < \cdots < \ell_k \leq \ell$, there exists an infinite set $B \subset \N$ such that for all $i=1, \ldots, k$, 
         \begin{equation*} 
             B^{\oplus \ell_i} \subset A_i. 
         \end{equation*}
\end{conjecture}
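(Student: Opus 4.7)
The plan is to adapt the Erd\H{o}s progression argument from \cref{theorem uniformity BB} to arbitrary indices $1 \leq \ell_1 < \cdots < \ell_k \leq \ell$. First I would apply \cref{furstenberg correspondence for sets} together with \cref{prop open extension of kmrr} to reduce to the dynamical setting of an ergodic system $(X,\mu,T)$ with open topological pronilfactors, a generic point $a\in\gen(\mu,\Psi)$, and $U^k(X,\mu,T)$-uniform clopen sets $E_1,\dots,E_k\subset X$ with $N(a,E_i)=A_i$. By \cref{Erdos cubes and progressions and sumsets}(1), it then suffices to produce an Erd\H{o}s progression $(a,y_1,\dots,y_\ell)\in X^{\ell+1}$ satisfying $y_{\ell_i}\in E_i$ for each $i=1,\dots,k$: setting $V_j=E_i$ when $j=\ell_i$ and $V_j=X$ otherwise, the lemma yields an infinite $B\subset\N$ with $B^{\oplus j}\subset N(a,V_j)$ for every $j$, and hence $B^{\oplus \ell_i}\subset A_i$ for each $i$.

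Since Erd\H{o}s progressions are dense in $\supp \sigma_a$ by \cref{lemma characterization progressive measures}(1), the task reduces to showing that the open set $\{a\}\times\{y\in X^\ell : y_{\ell_i}\in E_i \text{ for all } i\}$ has positive $\sigma_a$-measure. Using the lift structure of \cref{definition of sigma}, this measure equals
\begin{equation*}
\int_{Z_{\ell-1}^\ell} \prod_{i=1}^k g_i(z_{\ell_i}) \diff \xi_a, \qquad g_i := \E(\1_{E_i} \mid Z_{\ell-1}),
\end{equation*}
and the $U^k$-uniformity of $E_i$ combined with the tower property yields $\E(g_i \mid Z_{k-1})=\mu(E_i)$. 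Decomposing $g_i=\mu(E_i)+h_i$ with $\E(h_i \mid Z_{k-1})=0$ and expanding the product, the $S=\emptyset$ term contributes the main mass $\prod_i \mu(E_i)>0$, so it remains to show that for every non-empty $S\subset\{1,\dots,k\}$,
\begin{equation*}
I_S := \int_{Z_{\ell-1}^\ell} \prod_{i\in S} h_i(z_{\ell_i}) \diff \xi_a = 0.
\end{equation*}

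For the vanishing of $I_S$ I would exploit that the orbit closure of $(\pi_{\ell-1}(a),\dots,\pi_{\ell-1}(a))$ under $T\times T^2\times\cdots\times T^\ell$ in $Z_{\ell-1}^\ell$ is minimal and uniquely ergodic, being an orbit closure in the distal pronilsystem $Z_{\ell-1}^\ell$; hence $I_S = \lim_N \tfrac{1}{|\Phi_N|}\sum_{n\in\Phi_N}\prod_{i\in S}h_i(T^{\ell_i n}\pi_{\ell-1}(a))$. The family $\{L_i(n)=\ell_i n : i\in S\}\cup\{0\}$ has Leibman complexity $|S|-1\leq k-1$ by \cref{remark def leibman for linear forms}; combined with $\E(h_i\mid Z_{|S|-1})=\E(\E(h_i\mid Z_{k-1})\mid Z_{|S|-1})=0$, the Host--Kra characteristic factor theorem forces this average to vanish in $L^2(m_{\ell-1})$. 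The hard part will be upgrading this $L^2$ vanishing to pointwise vanishing at our specific $\pi_{\ell-1}(a)$. I expect this to follow from Leibman's uniform convergence theorem for polynomial averages on nilmanifolds, transported to the pronil inverse limit using the continuity of the $h_i$ (guaranteed by \cref{prop cont disintegration and conditional expectation} and \cref{prop cont k-step max pronilfactors} in the open pronilfactor setting) and approximation by finite-step nilfactors. Carrying out this transfer carefully, and ensuring its compatibility with the unique ergodicity of the joint orbit closure in $Z_{\ell-1}^{|S|}$, is the delicate step I anticipate as the main obstacle.
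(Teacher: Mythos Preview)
The statement you are attempting is recorded in the paper as an open \emph{conjecture}; there is no proof in the paper, and the discussion immediately following it explicitly identifies the obstruction that your proposal runs into.

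Your reduction to the dynamical setting and to $\sigma_a$-positivity is natural, and your complexity bound $|S|-1\leq k-1$ together with $\E(h_i\mid Z_{|S|-1})=0$ does yield $L^2(m_{\ell-1})$-vanishing of the average $\tfrac{1}{N}\sum_n\prod_{i\in S}T^{\ell_i n}h_i$. The gap is exactly the step you flag as ``the main obstacle'': upgrading this $L^2$ vanishing to pointwise vanishing at the specific point $\pi_{\ell-1}(a)$ is \emph{impossible} in general. Leibman's theorem gives pointwise convergence at each $z\in Z_{\ell-1}$ to the integral over the orbit closure of $(z,\ldots,z)$ under $\prod_{i\in S}T^{\ell_i}$; at $\pi_{\ell-1}(a)$ this orbit closure can be a proper sub-nilmanifold of the generic one, and the integral of $\prod_{i\in S}h_i$ over it need not vanish even though it vanishes for $m_{\ell-1}$-almost every $z$. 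Continuity of the $h_i$ does not help, because the map $z\mapsto\int\prod_{i\in S}h_i\,d\nu_z$ (with $\nu_z$ the unique invariant measure on that orbit closure) is not continuous.

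The paper supplies a concrete witness right after the conjecture. Take $X=\T^2$ with $T(x,y)=(x+\alpha,y+2x+\alpha)$, $k=2$, $\ell=3$, $(\ell_1,\ell_2)=(1,3)$, $a=(0,0)$, and $E_1=E_2=E=\T\times(1/9^3,1/9^2)$. Here $Z_{\ell-1}=Z_2=X$, so $h=\1_E-\mu(E)$ is already continuous, and $\xi_a$ is Haar measure on $\{((t,s),(2t,4s),(3t,9s)):t,s\in\T\}$. A direct computation gives
\[
I_{\{1,2\}}=\int_{\T^2} h(t,s)\,h(3t,9s)\,dt\,ds=-\mu(E)^2,
\]
which exactly cancels the main term $\mu(E_1)\mu(E_2)=\mu(E)^2$ (the singleton terms $I_{\{1\}},I_{\{2\}}$ vanish), so that $\sigma_a(X\times E\times X\times E)=0$. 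Thus your claimed vanishing of $I_S$ is false, and more fundamentally the entire $\sigma_a$-positivity route cannot succeed for this instance: there is no Erd\H{o}s progression $(a,y_1,y_2,y_3)$ with $y_1,y_3\in E$ visible to $\sigma_a$, even though the conjecture predicts the combinatorial conclusion should hold.
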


The conjecture in \cite{kra_Moreira_Richter_Roberson2025problems} is the case $A = A_1 = \dots = A_k$ of a single $U^k(\Phi)$-uniform set $A$. More generally, we could ask if the same result holds when one intersects the sets $A_1, \ldots, A_k$ with a $\nilbohrO{}$ set $D \subset \N$ as in \cref{theorem uniformity BB with nilbohr}.

One way to address this conjecture would be to upgrade the consequence of \cref{theorem mu almost every BB} from an \emph{almost surely} result to a consequence for \emph{every} points. The problem with the approach used here is that the equality \eqref{eq equality that happends mu almost surely} is only true for a set of full measure. In \cite[Appendix A]{hernandez_kousek_radic2025density_Hindman} we give an example where the equality is not achieved for a given point. 
Developing that example further, let $(\T^2, m_{\T^2}, T)$ be the ergodic affine system given by $T(x,y) = (x+\alpha, y+ 2x + \alpha)$ for all $(x,y) \in \T^2$. We have that $(\T^2, m_{\T^2}, T)$ is a $2$-step nilsystem and $Z_1 = \T$ where the factor map $\pi_1 \colon X \to Z_1 $ is given by the projection to the first coordinate. 
Therefore, the set 
$$E= \T \times (1/9^3,1/9^2) \quad \text{ is } \ U^2(\T^2, m_{\T^2}, T)\text{-uniform.}$$ 
Thus take $a=(0,0)$ and consider $\sigma_a \in \cM(X^4)$ the measure defined as in \cref{definition of sigma}. Then $\sigma_a$ is the Haar measure of
\begin{equation*}
    Y = \{ (0,0,t,s, 2t,4s,3t,9s) \in X^4 \mid t,s \in \T\} 
\end{equation*}
notice that if $s \in (1/9^3,1/9^2)$ then $9 s \in (1/9^2,1/9)$, in particular $9s \not \in (1/9^3 , 1/9^2)$. Therefore $Y \cap (X \times E \times X \times E) = \emptyset$, hence $\sigma_a(X \times E\times X \times E) =0 $. Since that measure is zero, we cannot conclude that there is an Erd\H{o}s progression in $(a,x_1,x_2,x_3)$  with $x_1 \in E$ and $x_2 \in E$, so there is no direct way to conclude that $B \cup (B \oplus B \oplus B) \subset N(a,E)$ (which should be true under \cref{conjecture KMRR BB}).

Concerning minimal systems and sumset characterizations of $\RP^{[k]}(X)$, \cref{prop counterex gen BB} shows that \cref{theorem summary} cannot be generalized to uniquely ergodic minimal systems. We wonder then 
\begin{question} \label{question RP}
    Can \cref{theorem summary} be generalized beyond pronilsystems? In particular, is it true for minimal distal systems?
\end{question}

In a different direction, we can ask about the $k$-step pronilfactor of a joining $\lambda \in J_e(\mu, \nu)$. In \cref{prop joining of X and a pronil} we prove that if $(X, \mu,T)$ is an ergodic system and $(Y, \nu,S)$ is an $s$-step pronilsystem, then $Z_k(\lambda)$ is a joining of $Z_k(\mu)$ and $Z_k(\nu)$. 

\begin{question} \label{question joinings}
    Is it true that for any ergodic systems $(X, \mu,T)$ and $(Y, \nu,S)$ and any ergodic joining $\lambda \in J_e(\mu, \nu)$, the factor $Z_k(\lambda)$ is a joining of the factors $Z_k(\mu)$ and $Z_k(\nu)$? 
\end{question}

We end this section by showing that the answer is yes when $\mu \times \nu$ ergodic.

\begin{proposition}
    Let $(X, \mu,T)$ and $(Y, \nu , S)$ be a ergodic systems such that $\mu \times \nu$ is also ergodic. Then for every $k \in \N$ $Z_k(\mu \times \nu) = Z_k (\mu ) \times Z_k(\nu)$.
\end{proposition}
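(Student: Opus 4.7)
\emph{Plan.} I would argue the two inclusions separately, using the Host--Kra seminorm characterization of pronilfactors.

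For the inclusion that $Z_k(\mu) \times Z_k(\nu)$ is a factor of $Z_k(\mu \times \nu)$, observe that a finite product of $k$-step nilmanifolds $G_1/\Gamma_1 \times G_2/\Gamma_2$ is naturally a $k$-step nilmanifold $(G_1 \times G_2)/(\Gamma_1 \times \Gamma_2)$, and inverse limits commute with finite products; hence $Z_k(\mu) \times Z_k(\nu)$ equipped with the product measure and the diagonal action $T \times S$ is itself a $k$-step pronilsystem and a factor of $(X \times Y, \mu \times \nu, T \times S)$ via $\pi_k^\mu \times \pi_k^\nu$. By maximality of $Z_k(\mu \times \nu)$ among $k$-step pronilfactors, this factors through $Z_k(\mu \times \nu)$.

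For the reverse inclusion, I would use the characterization $\E(F \mid Z_k) = 0 \iff \norm{F}_{U^{k+1}} = 0$. Writing $L^2(\mu \times \nu) = \bigoplus_{i,j \in \{1,2\}} A_i \otimes B_j$ with $A_1 = L^2(Z_k(\mu))$, $B_1 = L^2(Z_k(\nu))$ and $A_2, B_2$ their orthogonal complements, any $F$ orthogonal to $L^2(Z_k(\mu)) \otimes L^2(Z_k(\nu))$ lies in the closed span of tensor products $f \otimes g$ with either $\norm{f}_{U^{k+1}(\mu)} = 0$ or $\norm{g}_{U^{k+1}(\nu)} = 0$. The task reduces to the key lemma: if $\norm{f}_{U^{k+1}(X, \mu, T)} = 0$, then $\norm{f \otimes g}_{U^{k+1}(X \times Y, \mu \times \nu, T \times S)} = 0$ for every $g \in L^\infty(\nu)$ (and the symmetric assertion).

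The lemma follows from the identity $(\mu \times \nu)^{\hkbraket{k+1}} = \mu^{\hkbraket{k+1}} \times \nu^{\hkbraket{k+1}}$ under the canonical identification $(X \times Y)^{\hkbraket{k+1}} \cong X^{\hkbraket{k+1}} \times Y^{\hkbraket{k+1}}$: applied to $F = f \otimes g$ the integral formula for the seminorm factorizes, yielding the multiplicativity
\[
\norm{f \otimes g}_{U^{k+1}(\mu \times \nu)}^{2^{k+1}} = \norm{f}_{U^{k+1}(\mu)}^{2^{k+1}} \cdot \norm{g}_{U^{k+1}(\nu)}^{2^{k+1}},
\]
so vanishing of one factor forces the tensor seminorm to vanish. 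I would prove the identity by induction on $k$; the base case $k = 0$ is trivial, and the inductive step uses the recursive description of the Host--Kra measures as relatively independent joinings over the invariant $\sigma$-algebra $\cI(T^{\hkbraket{k}})$. The main obstacle is to verify the splitting
\[
\cI(T^{\hkbraket{k}} \times S^{\hkbraket{k}}) = \cI(T^{\hkbraket{k}}) \otimes \cI(S^{\hkbraket{k}})
\]
on $\mu^{\hkbraket{k}} \times \nu^{\hkbraket{k}}$; equivalently, that for almost every pair of ergodic components of $\mu^{\hkbraket{k}}$ under $T^{\hkbraket{k}}$ and of $\nu^{\hkbraket{k}}$ under $S^{\hkbraket{k}}$, the product is ergodic under $T^{\hkbraket{k}} \times S^{\hkbraket{k}}$. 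This is precisely where the hypothesis that $\mu \times \nu$ is ergodic is essential: it excludes nontrivial common eigenvalues between $T$ and $S$, and this spectral disjointness propagates through the cubic construction via the description of ergodic components of $\mu^{\hkbraket{k}}$ in terms of the $(k-1)$-step pronilfactor (cf.\ \cite[Chapter 9]{Host_Kra_nilpotent_structures_ergodic_theory:2018}), delivering the required splitting at each inductive level.
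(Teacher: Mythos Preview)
Your proposal and the paper share the same core: both establish the cube-measure identity $(\mu \times \nu)^{\hkbraket{k}} = \mu^{\hkbraket{k}} \times \nu^{\hkbraket{k}}$ by induction on $k$ and deduce the multiplicativity $\norm{f \otimes g}_{U^{k}}^{2^{k}} = \norm{f}_{U^{k}}^{2^{k}} \cdot \norm{g}_{U^{k}}^{2^{k}}$, from which the identification of $Z_{k-1}$ follows. Your two-inclusion wrapper and tensor decomposition of $L^2(\mu\times\nu)$ are correct but not needed beyond what the multiplicativity already gives.

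The real difference is in the inductive step. You run the defining recursion (relative independent joining over $\cI(T^{\hkbraket{k}})$) and are then forced to verify the splitting $\cI(T^{\hkbraket{k}} \times S^{\hkbraket{k}}) = \cI(T^{\hkbraket{k}}) \otimes \cI(S^{\hkbraket{k}})$ on $\mu^{\hkbraket{k}} \times \nu^{\hkbraket{k}}$; your justification that ``spectral disjointness propagates through the cubic construction'' is precisely the crux and is left as a sketch. The paper avoids this entirely by invoking instead the formula
\[
\rho^{\hkbraket{k}} = \int_{Z_1(\rho)} \rho_s^{\hkbraket{k-1}} \diff m_{Z_1(\rho)}(s)
\]
from \cite[Theorem~18, Chapter~4]{Host_Kra_nilpotent_structures_ergodic_theory:2018}, where $(\rho_s)_{s\in Z_1(\rho)}$ is the ergodic decomposition of $\rho\times\rho$. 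For $\rho=\mu\times\nu$ one has $Z_1(\mu\times\nu)=Z_1(\mu)\times Z_1(\nu)$, and the components are $(\mu\times\nu)_{(s_1,s_2)}=\mu_{s_1}\times\nu_{s_2}$; these are ergodic \emph{because they are ergodic components}, so the inductive hypothesis (stated for arbitrary pairs with ergodic product, and applied now to the pair $\mu_{s_1},\nu_{s_2}$ on $X^2,Y^2$) fires immediately with no separate spectral argument. Your route can be completed, but making the propagation claim rigorous would essentially reconstruct this device.
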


\begin{proof}
    For this we prove that for every $k \in \N$ $(\mu \times \nu)^{\hkbraket{k}} = \mu^{\hkbraket{k}} \times \nu^{\hkbraket{k}}$ with the obvious identification. For the case $k =1 $ this is trivial, because by ergodicity $(\mu \times \nu)^{\hkbraket{1}} = (\mu \times \nu) \times (\mu \times \nu) = \mu^{\hkbraket{1}} \times \nu^{\hkbraket{1}}$. In particular, $Z_1(\mu \times \nu) = Z_1 (\mu) \times Z_1(\nu)$
    
    By induction, suppose that for any system whose product is ergodic we have the proposition for $k -1$. Then notice that
    \begin{align*}
        (\mu \times \nu)^{\hkbraket{k}} = \int_{Z_1(\mu \times \nu)} (\mu \times \nu)^{\hkbraket{k-1}}_s \diff m_{Z_1(\mu \times \nu)}(s)
    \end{align*}
    where $(\mu \times \nu)_s$ for $s \in m_{Z_1(\mu \times \nu)}$ is the ergodic decomposition of $(\mu \times \nu) \times (\mu \times \nu)$ over $Z_1(\mu \times \nu)$ (see \cite[Theorem 18, Chapter 4]{Host_Kra_nilpotent_structures_ergodic_theory:2018}). Now, since $Z_1(\mu \times \nu) = Z_1(\mu) \times Z_1(\nu)$ we get that $(\mu \times \nu)_s = (\mu_{s_1} \times \nu_{s_2})$ for $m_{Z_1(\mu \times \nu)}$-almost every $s = (s_1, s_2)$. Likewise, for $m_{Z_1(\mu \times \nu)}$-almost every $s = (s_1, s_2)$,  $(\mu_{s_1} \times \nu_{s_2})$ is ergodic, so by inductive assumption  
    \begin{equation*}
        (\mu \times \nu)^{\hkbraket{k-1}}_s = (\mu_{s_1} \times \nu_{s_2})^{\hkbraket{k-1}} = \mu_{s_1}^{\hkbraket{k-1}} \times \nu_{s_2}^{\hkbraket{k-1}}
    \end{equation*}
    and therefore,
    \begin{align*}
        (\mu \times &\nu)^{\hkbraket{k}} = \int_{Z_1(\mu)} \bigg( \int_{Z_1(\nu)}  \mu_{s_1}^{\hkbraket{k-1}} \times \nu_{s_2}^{\hkbraket{k-1}} \diff m_{Z_1(\nu)}(s_2) \bigg) \diff m_{Z_1(\mu)}(s_1) \\
        &= \left( \int_{Z_1(\mu)}   \mu_{s_1}^{\hkbraket{k-1}} \diff m_{Z_1(\mu)}(s_1)\right)  \times \left(\int_{Z_1(\nu)} \nu_{s_2}^{\hkbraket{k-1}} \diff m_{Z_1(\nu)}(s_2) \right) = \mu^{\hkbraket{k}} \times \nu^{\hkbraket{k}}.
    \end{align*}
    To conclude the proof, we need to check that $\norm{f \otimes g}_{U^k(X \times Y, \mu \times \nu , T \times S)} =0$ if and only if $\norm{f }_{U^k(X , \mu  , T )} =0$ or $\norm{g}_{U^k(Y, \nu ,  S)} =0$. This is direct by noticing that
    \begin{align*}
        &\norm{f \otimes g}_{U^k(X \times Y, \mu \times \nu , T \times S)}^{2^k} = \int_{(X \times Y)^{\hkbraket{k}}} \bigotimes_{\epsilon \in \hkbraket{k}} C^{|\epsilon|} (f \otimes g) \diff (\mu \times \nu)^{\hkbraket{k}} = \\
        &\left( \int_{X^{\hkbraket{k}}} \bigotimes_{\epsilon \in \hkbraket{k}} C^{|\epsilon|} f  \diff \mu^{\hkbraket{k}} \right)  \left( \int_{Y^{\hkbraket{k}}} \bigotimes_{\epsilon \in \hkbraket{k}} C^{|\epsilon|} g  \diff \nu^{\hkbraket{k}} \right)  = \norm{f }_{U^k(X , \mu , T )}^{2^k} \cdot \norm{g }_{U^k(Y , \nu , S )}^{2^k}. \qedhere
    \end{align*}
    \end{proof}

\small{
\bibliographystyle{abbrv}
\bibliography{refs}

}

\end{document}